\numberwithin{equation}{section}
\newtheorem{theorem}{Theorem}[section]
\newtheorem{lemma}[theorem]{Lemma}
\newtheorem{proposition}[theorem]{Proposition}
\newenvironment{proof}{{\bf Proof}.\ }{ \hfill $\square$}
\newcommand{\myendproof}{ \hfill $\square$}
\newcommand{\myremark}{ {\it Remark.} \quad}
\newcommand{\bke}[1]{\left ( #1 \right )}
\newcommand{\bkt}[1]{\left [ #1 \right ]}
\newcommand{\bket}[1]{\left \{ #1 \right \}}
\newcommand{\norm}[1]{ \| #1  \|}
\newcommand{\bka}[1]{{\langle #1 \rangle}}
\newcommand{\ip}[2]{{( #1 \, , \, #2 )}}
\newcommand{\abs}[1]{\left | #1 \right |}
\def\al{\alpha}
\def\de{\delta}
\def\ve{\varepsilon}
\def\e {\varepsilon}
\def\th{\theta}
\def\ka{\kappa}
\def\la{\lambda}
\def\si{\sigma}
\def\ph{\varphi} 
\def\om{\omega}
\def\Ga{\Gamma}
\def\De{\Delta}
\def\Th{\Theta}
\def\La{\Lambda}
\def\Om{\Omega}
\newcommand{\R}{\mathbb{R}}
\newcommand{\N}{\mathbb{N}}
\newcommand{\sD}{\EuScript{D}}
\renewcommand{\div}{\mathop{\rm div}}
\newcommand{\curl} {\mathop{\rm curl}}
\newcommand{\supp} {\mathop{\mathrm{supp}}}
\newcommand{\dist} {\mathop{\mathrm{dist}}}
\newcommand{\pd}{\partial}
\newcommand{\nb}{\nabla}
\newcommand{\td}{\tilde}
\newcommand{\wt}[1]{\widetilde {#1}}
\newcommand{\lec}{{\ \lesssim \ }}
\newcommand{\gec}{{\ \gtrsim \ }}
\newcommand{\cD}{\mathcal{D}}
\newcommand{\cE}{\mathcal{E}}
\newcommand{\I}{\infty}
\newcommand{\ot}{\otimes}
\renewcommand{\[}{\begin{equation}}
\renewcommand{\]}{\end{equation}}
\newcommand{\EQ}[1]{\begin{equation}\begin{split} #1 \end{split}\end{equation}}
\newcommand{\Rcut}{\mathcal{R}_{cut}}
\newcommand{\qz}{\frac 3{1+\de}}
\begin{document}

\title{Asymptotics of small exterior Navier-Stokes flows with non-decaying
  boundary data}

\author{Kyungkuen Kang%
\thanks{Department of Mathematics, Yonsei University, Seoul 120-749,
South Korea. Email: kkang@yonsei.ac.kr}, \quad Hideyuki Miura%
\thanks{Department of Mathematics, Osaka University,
Osaka, Japan. Email: miura@math.sci.osaka-u.ac.jp}, \quad Tai-Peng Tsai%
\thanks{Department of Mathematics, University of British Columbia,
Vancouver, BC V6T 1Z2, Canada. Email: ttsai@math.ubc.ca.}}

\date{2011-05-02}

\maketitle

\begin{abstract}
We prove the unique existence of solutions of the 3D incompressible
Navier-Stokes equations in an exterior domain with small non-decaying
boundary data, for $t \in \R$ or $t \in (0,\I)$.  In the latter case
it is coupled with small initial data in weak $L^{3}$. As a corollary,
the unique existence of time-periodic solutions is shown for the small
periodic boundary data.  We next show that the spatial asymptotics of
the periodic solution is given by the same Landau solution at all
times. 
Lastly we show that if the boundary datum is time-periodic and the initial
datum is asymptotically discretely self-similar, then the solution is
asymptotically the sum of a time-periodic vector field and a forward
discretely self-similar vector field as time goes to infinity. It in
particular shows the stability of periodic solutions in a local sense.

{\it Keywords}: Navier-Stokes equations, exterior domain, spatial
asymptotics, time asymptotics, time-periodic, Landau solution,
discretely self-similar, stability.

{\it Mathematics Subject Classification (2010)}: 35Q30, 35B10, 35B40
\end{abstract}

\section{Introduction}
\label{S1}

Let $\Om\subset \R^3$ be an exterior domain with smooth boundary
$\pd\Om$ and unit outernormal $N$, and $I=\R$ or $I=(0,\I)$ be the
time interval.  In $I\times \Om$ we consider the nonstationary
Navier-Stokes equations
\begin{equation}
\label{NS1} \pd _t u -\De u + (u \cdot \nb) u  + \nb p =f , \quad
\div u = 0,
\end{equation}
\begin{equation}
\label{NS2} u|_{\pd \Om} = u_*, \quad \lim_{|x|\to \infty} u=0,
\end{equation}
where $u(t,x) : \bar I\times \Om\to \R^3$ is the unknown velocity field,
$p(t,x) : \bar I \times \Om \to \R$ the unknown pressure, $u_*$ the given
boundary data and $f=f_0 + \nb \cdot F$ the given force with
$F=(F_{ij})$ being a 2-tensor and $(\nb \cdot F)_j =\pd_i F_{ij}$. We
use summation convention for repeated index and $u \cdot \nb= u_j
\pd_j$.  In the case $I=(0,\I)$, we add the initial condition
\begin{equation}
\label{NS3} u|_{t=0} = u_0,\quad \div u_0=0.
\end{equation}
By an exterior domain we mean a connected open set with bounded
complement.  Without loss of generality, we may assume the complement
of $\bar \Om$ contains the origin and is a subset of $B_{R_1}:=\{x\in
\R^3:|x|<R_1\}$ for some $R_1>0$.

In this paper, we first consider the solvability of the problem
\eqref{NS1}--\eqref{NS3} with nondecaying boundary data in time.
Moreover, we also study the asymptotic properties of solutions with
time-periodic boundary data.

To explain the background, we start with the review of the stationary
problem.  In 1965 Finn \cite{Finn} showed the existence of a small
stationary solution satisfying $|u(x)| \le C|x|^{-1}$ in $\Om$ for
small data.  Nazarov and Pileckas \cite{Naz-Pil} proved that the
solution is asymptotically self-similar at spatial infinity, i.e., the
solution converges to a (-1)-homogeneous vector field faster than
$C|x|^{-1}$.  Recently Korolev and Sverak \cite{Kor-Sve} showed the
asymptotic profile is given by a \textit{Landau solution}.  In
particular the decay rate $|x|^{-1}$ is optimal in general.

Landau solutions are a family of vector fields $U^b$ and functions
$P^b$ in $\R^3$, with parameter $b\in \R^3$, that solve
\begin{equation}
\label{Landau-eq} -\De u + (u \cdot \nb) u + \nb p ={b\de_0}, \quad
\div u = 0,
\end{equation}
where $\de_0$ is the delta function at the origin. They are
axisymmetric: in spherical coordinates $\rho, \th, \varphi$ with
$b$ in the direction of the north pole,
\begin{equation}
{ U^b = \frac 2{\rho}\bke{\frac {A^2-1}{(A-\cos \varphi)^2} -1}
e_\rho + 0 e_\th- \frac {2\sin \varphi} {\rho(A-\cos \varphi)}
e_\varphi,}
\end{equation}
where $A=A(|b|) \in (1,\infty]$ is determined by $ |b| = 16 \pi (A
+ \frac 12 A^2 \log \frac {A-1}{A+1} + \frac {4A}{3(A^2-1)})$ and
strictly decreasing in $|b|\ge 0$.  It is the unique solution of
\eqref{Landau-eq} in the class of ($-1$)-homogeneous vector fields
in $\R^3$. See Landau \cite{Landau,LL}, Tian and Xin
\cite{Tian-Xin}, and Sverak \cite{Sverak}.

On the other hand, there are also a lot of works on the time-periodic
solutions in exterior domains. After the earlier works by Salvi
\cite{Salvi} and Maremonti-Padula \cite{Mar-Pad}, Yamazaki
\cite{Yamazaki} showed the unique existence of time-periodic solutions
in the Lorentz space $L^{3,\I}$ for zero boundary data and small
forces $F \in L^{3/2,\I}$ with $f_0=0$.  Galdi and Sohr
\cite{Galdi-Sohr} further showed the existence of time-periodic
solutions satisfying the pointwise estimate $|u(t,x)| \le C|x|^{-1}$
where $C$ is independent of time.

When we are interested in  time-periodic solutions or,
more generally, the exterior problem for nondecaying
boundary data and forces, the
function spaces should allow nondecaying
functions in time.
On the other hand, in view of the optimal decay rate $|x|^{-1}$
for the stationary solutions, it is natural to choose the spaces
$L^{3,\I}$ (weak-$L^3$ space)
or $X_1$ in spatial variables, where
$X_k$ (for $k>0$) is the space of functions
defined by the norm
\[ 
\label{Xk.def} \norm{u}_{X_k} := \sup_{ x \in \Om}
(1+|x|)^k |u(x)| . 
\]
Let $BC_w(J; X)$ be the class
of bounded and weak-star continuous $X$-valued functions
defined on a time interval $J$. We omit the subscript $_w$ if it
is strongly continuous.

Our first result concerns the unique existence of {\it very weak
  solutions}, to be defined in \S\ref{S2.2}, with small nondecaying
boundary data in the time interval $\R$ or $(0,\I)$. As a consequence,
we obtain an existence theorem of time-periodic solutions.

\begin{theorem}[Existence]
\label{th1} 
Let $\Om$ be a smooth exterior domain in $\R^3$ with $\pd \Om \subset
\{x:|x|<R_1\}$.  Let $I$ be $\R$ or $(0,\I)$. Let $\de$ be a positive
constant.  There are constants $\e_0>\td \e_0>0$ and $C>0$ such that
the following holds: Let $u_0$, $u_*$, $f_0$ and $F$ be given data
with the convention $u_0=0$ if $I=\R$.

(i) Assume that 
\begin{equation} \e:=\norm{u_*}_{W^{1,\I}(I; C^2_x(\pd
\Om))} +\|f_0\|_{L^\infty(I; X_{3+\de})} + \|F\|_{L^{\I}(I;
L^{3/2,\I}(\Om))} +\norm{u_0}_{L^{3,\I} (\Om)} \le \e_0.
\end{equation}
Then there is a
unique very weak solution $u \in BC_w(\bar I; L^{3,\I} (\Om))
$ of \eqref{NS1}--\eqref{NS3} satisfying
\begin{equation}\label{th1-eq1}
\norm{u}_{L^\I(I; L^{3,\I} (\Om))} \le C\e .
\end{equation}

(ii) Let $I=\R$. Assume that
all $u_*$, $f_0$ and $F$ are periodic in time
with  period $T>0$, then the solution in (i) is also periodic in time
with same period.

(iii) Let $I=\R$. Assume that
\begin{equation}\label{iii-est}
 \td \e :=
\norm{u_*}_{W^{2,\I}(\R; C^2_x(\pd \Om))}+\|f_0\|_{W^{1,\I}(\R;
X_{3+\de})} + \|F\|_{W^{1,\I}({\R}; X_{2})}\le \td \e_0,
\end{equation}
then
the solution in (i) satisfies
\begin{equation}\label{th1-eq2}
|u(t,x)| \le C \td \e |x|^{-1}, \quad
(|x|>R_1, \ t\in \R).
\end{equation}
\end{theorem}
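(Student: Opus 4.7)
The plan is to deduce (i) by a Banach fixed point on an integral equation for a reduced problem with zero boundary data, to derive (ii) from the uniqueness in (i), and to upgrade the $L^{3,\I}$ bound of (i) to the pointwise $|x|^{-1}$ decay of (iii) via a stationary-elliptic bootstrap exploiting the extra time regularity and spatial decay imposed in \eqref{iii-est}.

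For (i), I would first construct a solenoidal extension $E(t,\cdot)$ of $u_*(t,\cdot)$ into $\Om$, compactly supported in an annular shell near $\pd\Om$ (Bogovskii-type) and preserving the time regularity of $u_*$. The unknown $v=u-E$ then solves a perturbed Navier--Stokes system with zero boundary data, modified force $\td f_0=f_0-\pd_t E+\De E$, stress $\td F=F-v\ot E-E\ot v-E\ot E$, and initial value $v_0=u_0-E(0)$. The Duhamel formula with the exterior-domain Stokes semigroup $S(t)$ on $L^{3,\I}(\Om)$,
\[
v(t)=S(t-t_0)v_0+\int_{t_0}^{t}S(t-s)P\bigl[\td f_0+\nb\cdot(\td F-v\ot v)\bigr](s)\,ds,
\]
with $t_0=0$ when $I=(0,\I)$ and $t_0=-\I$ when $I=\R$, furnishes a contraction in a small ball of $L^\I(I;L^{3,\I}(\Om))$ via the kernel bound $\|S(t)P\nb\cdot F\|_{L^{3,\I}}\lec t^{-1/2}\|F\|_{L^{3/2,\I}}$. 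For (ii), the translate $\td u(t,\cdot):=u(t+T,\cdot)$ is another very weak solution with the same $T$-periodic data and the same bound from (i), so uniqueness forces $\td u=u$.

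The core is (iii). First, differentiating the integral equation in $t$ and reapplying the (i)-contraction under the $W^{1,\I}$ regularity in \eqref{iii-est} yields $\pd_t u\in L^\I(\R;L^{3,\I}(\Om))$ with $\|\pd_t u\|\lec\td\e$. Treating $t$ as a parameter, at each instant $u(t,\cdot)$ solves the stationary exterior Stokes system
\[
-\De u+\nb p=(f_0-\pd_t u)+\nb\cdot(F-u\ot u),\quad \div u=0\text{ in }\Om,\quad u|_{\pd\Om}=u_*(t,\cdot),
\]
with far-field decay. I would then set up a second contraction in $L^\I(\R;X_1)$ using the stationary exterior Stokes Green-tensor (Oseen) representation, $|G(x,y)|\lec|x-y|^{-1}$: for $|x|>R_1$,
\[
u(t,x)=\int_{\Om}G(x,y)(f_0-\pd_t u)(t,y)\,dy-\int_{\Om}\nb_y G(x,y)\cdot(F-u\ot u)(t,y)\,dy+\text{(boundary terms)}.
\]
Since $|F|\lec|x|^{-2}$, $|f_0|\lec|x|^{-3-\de}$ by \eqref{iii-est}, the $C^2_x$ boundary data contributes $|x|^{-1}$, and $u\ot u$ lies in $X_2$ once we have established $u\in X_1$; convolution estimates such as $\int_{\Om}|x-y|^{-2}|y|^{-2}\,dy\lec|x|^{-1}$ for $|x|>R_1$ close the bootstrap, yielding \eqref{th1-eq2}.

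The main obstacle will be the treatment of $\pd_t u$ in the stationary representation: a priori $\pd_t u$ is only $L^{3,\I}_x$, whose Oseen-tensor convolution produces $L^{3,\I}$ output rather than pointwise $|x|^{-1}$ decay. I expect to resolve this by a further bootstrap using $\pd_t^2 u\in L^\I_t L^{3,\I}_x$ obtained from the $W^{2,\I}$ time-regularity in \eqref{iii-est}: running the same pointwise argument on $\pd_t u$ itself upgrades it to $X_1$, so that its Oseen-tensor contribution does decay as $|x|^{-1}$. Alternatively, a spatial cutoff could isolate the far-field (handled by the Oseen tensor) from a near-field piece treated by local weighted $L^q$ estimates, with the smallness of $\td\e$ absorbing the remainder.
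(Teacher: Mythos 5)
Parts (i) and (ii) of your proposal match the paper's strategy: extend the boundary data by a compactly supported solenoidal field (Lemma \ref{th:extension2} in the paper), contract the Duhamel integral equation for $v=u-E$ in $L^\I(I;L^{3,\I}(\Om))$, and deduce periodicity from uniqueness. Your attack on (iii), however, has a genuine gap that your proposed fix does not close.

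You rewrite the problem as a stationary exterior Stokes system with $\pd_t u$ as a source, and you correctly flag the difficulty that $\pd_t u\in L^\I_t L^{3,\I}_x$ is too weak for the Green-tensor convolution. But your repair — bootstrap $\pd_t u$ into $X_1$ by running the same argument one derivative up — does not work for two reasons. First, it is circular: applying the same stationary representation to $\pd_t u$ would require $\pd_t^2 u\in X_1$, which would in turn require $\pd_t^3 u\in X_1$, and the hypothesis \eqref{iii-est} only controls two time derivatives. Second, and more fundamentally, even $\pd_t u\in X_1$ is not enough: the convolution
\[
\int_{\Om}\frac{1}{|x-y|}\,|\pd_t u(t,y)|\,dy \qquad\text{with}\quad |\pd_t u(t,y)|\lesssim |y|^{-1}
\]
is divergent in $\R^3$ (the integrand decays only like $|y|^{-2}$ at infinity), so no finite number of time derivatives in $X_1$ would close a stationary-Green-tensor bootstrap. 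The stationary Oseen tensor simply cannot absorb a source term that decays only like $|y|^{-1}$.

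The paper avoids this entirely by not treating $\pd_t u$ as a source. Instead it uses the a priori bound $v_t\in L^\I_t(L^{3,\I}\cap L^{6,\I})_x$ only to obtain \emph{local} bounds for $v$, $\nb v$, $\pi$ on the near-boundary shell $\Rcut$ (via interior regularity for the time-frozen Stokes system, as in \cite{KK06,MT}). It then cuts off to $|x|>R_1$, extends $\td u=\zeta u+\nb\eta$ to all of $\R^3$, and runs a contraction directly in $L^\I(\R;X_1)$ using the \emph{nonstationary} Stokes fundamental tensor $S(t,x)$ and the key estimate $|S(t,x)|\lesssim(|x|+\sqrt t)^{-3}$ (Lemma \ref{th:Stokes}). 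There the time derivative is the left-hand side of the evolution, not a source, and the time-integrated kernel $\int_0^\I(|x|+\sqrt t)^{-3}dt\lesssim|x|^{-1}$ yields the $X_1$ bound for the compactly supported forcing that remains after cutoff. If you want to salvage the stationary picture, you would have to exploit some cancellation in $\pd_t u$ (e.g.\ that it is itself a spatial divergence), but as written the stationary bootstrap does not close.
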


{\it Comments for Theorem \ref{th1}:}

\begin{enumerate}

\item
The initial condition \eqref{NS3} is understood in the weak-star sense.
Thus we do not need a compatibility
condition between $u_*$ and $u_0$.
%
When $u_*=0$, we usually also
require $u$ belongs $L^{3,\I}_\si(\Om)$ which is the subspace of
$L^{3,\I}(\Om;\R^3)$ with $\div u=0$ and $u \cdot N|_{\pd \Om}=0$,
$N$ being the outer-normal of $\Om$, see \S\ref{S2.1}.
This is not suitable for the inhomogeneous boundary value problem.


\item In the case $I=\R$ and $u_*$ is time-independent
or time-periodic, we recover the results of Finn
 \cite{Finn}, Galdi and Sohr \cite{Galdi-Sohr} 
on the existence of solutions behaving like
$|x|^{-1}$ as $|x|\to \I$.


\item As noted above, there are a lot of literature on the existence
  of periodic exterior flows for zero boundary data
  \cite{Salvi,Mar-Pad,Yamazaki,Galdi-Sohr}.  Our result allows nonzero
  boundary data.  There are also results on existence of solutions for
  nonzero boundary data e.g.  Amann \cite{Amann}, Farwig, Kozono and
  Sohr \cite{FKS}.  However, most of them require time decay of the
  boundary data, which is not suitable for the periodic solutions.
  Since the {\it maximal regularity estimates} in \cite{Amann, FKS}
  are not available in our solution spaces, we use the duality
  argument by Yamazaki \cite{Yamazaki} to construct the solution.  In
  fact, we first decompose $u=E+v$ where $E$ is an extension of the
  boundary data $u_*$, and then we construct the unique {\it mild
    solution} $v$ of the difference equation \eqref{eq2.72} following
  the method by \cite{Yamazaki}.  Unlike \cite{Yamazaki}, we do not
  require time continuity of the force (in part (i)) and our solution
  is only weak-star continuous in $t$. We then show the equivalence of
  this solution and a very weak solution in \S\ref{sec2.7}.  Note the
  datum $u_*$ is in the $C^2$ class, not usual $C^{2,\al}$, $0<\al<1$.
  The existence of the extension $E$ is shown in Lemma
  \ref{th:extension2}.

\end{enumerate}

Our second result concerns the spatial asymptotics of time-periodic
solutions. To describe it, we recall the {\it momentum flux density
  tensor} for a solution $(u, p)$ of \eqref{NS1},
\begin{equation}
\label{Tij.def} T_{ij}(u,p,F)= p\de_{ij} + u_i u_j - \pd_i u_j -
\pd_j u_i - F_{ij}.
\end{equation}
Equation~\eqref{NS1} can be written as
\begin{equation}
\label{NS1b} \pd_t u _j + \pd_i T_{ij} = f_{0j}, \quad \div u=0.
\end{equation}
We show that the asymptotic profile of a time-periodic solution is
given by a Landau solution determined by the tensor $T_{ij}$ and
chosen independent of time.

%
%

\begin{theorem}[Spatial asymptotics of time-periodic solutions]
\label{th2}
For any $T>0$, $R>0$ and $\al \in (1,2)$, there are
constants $\e_1>0$ and $C>0$
such that the following holds. Suppose $(u,p)$ is a
time-periodic solution  of \eqref{NS1} with period $T$
for $R<|x|<\infty$, and satisfies
\begin{equation}\label{th2-eq1}
\begin{split} \e:= \sup_{t\in \R} \bigg\{&\sup _{|x|>R}
\bket{|x|^{2+\al} |f_0(t,x)| +|x|^{1+\al} |F(t,x)|} +
\norm{u(t,\cdot)}_{L^{3,\I}(|x|>R)} +
\\
&+  \sup _{R<|x|<R+1} \bket{|u(t,x)| +|\nb u(t,x)| +|p(t,x)| }
\bigg \}\quad \le \e_1.\end{split}
\end{equation}
Then $|u(t,x)
| \le C \e |x|^{-1}$ for $|x| >R$ and all $t \in \R$. Moreover,
let $T_{ij}$ be defined by \eqref{Tij.def}, let
\begin{equation}\label{th2-eq2}
b_j = \lim_{\rho \to \I} \frac 1T\int_0^T \int _{|x|=\rho} T_{ij}n_i
dS_x\,dt,\quad (n_i = \frac{x_i}{|x|},\ i=1,2,3),
\end{equation}
and let $U^b$ be the Landau solution corresponding to $b$, whose
choice is independent of $t$. Then
\begin{equation}\label{th2-eq3}
|u(t,x) - U^b(x)| \le C \e |x|^{-\al}, \quad (|x| >R;\ t \in \R).
\end{equation}
\end{theorem}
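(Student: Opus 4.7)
The plan is to (i) upgrade the $L^{3,\infty}$ smallness in~\eqref{th2-eq1} to pointwise $|x|^{-1}$ decay; (ii) verify that the momentum flux $b_j$ in~\eqref{th2-eq2} is well-defined and $t$-independent; (iii) compare the time mean $\bar u(x) = T^{-1}\int_0^T u(t,x)\,dt$ to $U^b$ via a stationary asymptotic analysis; and (iv) control the oscillation $u' = u - \bar u$.

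For step (i), fix $(t_0, x_0)$ with $|x_0| \ge 2R$ and rescale with $r = |x_0|/8$: put $\wt u(s,y) = r\,u(t_0 + r^2 s, x_0 + ry)$, $\wt p = r^2 p(\cdots)$, and analogously for $f_0$ and $F$. On the unit parabolic cylinder, $\wt u \in L^\infty_t L^{3,\infty}_x$ has norm $\lesssim \varepsilon$, and the rescaled data are $O(\varepsilon)$ in the relevant critical norms since the original $f_0$ and $F$ carry the weights $|x|^{-2-\alpha}$ and $|x|^{-1-\alpha}$. An $\varepsilon$-regularity result of Caffarelli--Kohn--Nirenberg / Ladyzhenskaya--Seregin type gives $|\wt u(0,0)| + |\nabla \wt u(0,0)| \lesssim \varepsilon$, which unrescales to $|u(t_0, x_0)| + |x_0|\,|\nabla u(t_0, x_0)| \le C\varepsilon|x_0|^{-1}$; the associated pressure equation yields $|p(t,x) - c(t)| \le C\varepsilon|x|^{-2}$ for some $c(t)$.

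For step (ii), fix $R' \in (R, R+1)$ and integrate \eqref{NS1b} over $\{R' < |x| < \rho\} \times [0, T]$. The $\partial_t u$ contribution vanishes by periodicity, and the divergence theorem gives
\[
\int_0^T \!\! \int_{|x|=\rho} T_{ij} n_i\,dS\,dt = \int_0^T \!\! \int_{|x|=R'} T_{ij} n_i\,dS\,dt + \int_0^T \!\! \int_{R' < |x| < \rho} f_{0j}\,dx\,dt.
\]
Since $|f_0| \le \varepsilon|x|^{-2-\alpha}$ is integrable at infinity ($\alpha > 1$), the right-hand side has an absolute limit as $\rho \to \infty$; hence $b$ exists and depends neither on $t$ nor on $R'$. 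For step (iii), time-averaging \eqref{NS1} yields the stationary system
\[
-\Delta \bar u + (\bar u \cdot \nabla)\bar u + \nabla \bar p = \bar f_0 + \nabla \cdot (\bar F - \overline{u' \otimes u'}),\qquad \div \bar u = 0,
\]
on $\{|x| > R\}$, whose effective stress satisfies $|\bar F - \overline{u' \otimes u'}| \lesssim \varepsilon|x|^{-1-\alpha} + \varepsilon^2|x|^{-2}$ and whose momentum flux at infinity is exactly $b$. A perturbative extension of the Korolev--Sverak~\cite{Kor-Sve} / Nazarov--Pileckas~\cite{Naz-Pil} analysis---which identifies the $(-1)$-homogeneous profile of a small stationary exterior solution with the Landau solution of matching flux---then gives $|\bar u(x) - U^b(x)| \le C\varepsilon|x|^{-\alpha}$.

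Step (iv) is the main difficulty: show $|u'(t,x)| \le C\varepsilon|x|^{-\alpha}$. The oscillation is $T$-periodic with zero time mean and satisfies a perturbed Stokes system whose forcing $f_0 - \bar f_0 + \nabla \cdot (F - \bar F) - \nabla \cdot (u \otimes u - \overline{u \otimes u})$ is also mean-zero with spatial decay $\varepsilon|x|^{-1-\alpha}$ in divergence form. Expanding $u'(t,x) = \sum_{k \ne 0} \hat u_k(x)\,e^{2\pi i k t/T}$ reduces matters to resolvent Stokes equations $(2\pi i k/T - \Delta)\hat u_k + \nabla \hat p_k = \hat g_k$ with small nonlinear coupling through $\bar u$; the Yukawa-type spatial decay of the shifted resolvent, combined with a fixed-point argument in weighted spaces that absorbs the quadratic coupling using the smallness of $\varepsilon$, gives the required bound. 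Together with step (iii) this yields~\eqref{th2-eq3}. The main obstacle is step (iv): translating mean-zero time periodicity into improved spatial decay in an exterior domain, while handling the nonlinear coupling back to $\bar u$ purely perturbatively. A secondary difficulty is the borderline weight $|x|^{-2}$ produced by $\overline{u' \otimes u'}$ in step (iii), which is absorbed using the smallness $\varepsilon^2$.
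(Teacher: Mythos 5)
Your overall strategy---time-average to get a stationary problem for $\bar u$ (step (iii)) and Fourier-analyze the zero-mean oscillation $u'$ via resolvent Stokes estimates (step (iv))---is genuinely different from the paper. The paper never splits $u$ into mean and oscillation. Instead, after cutting off, it compares $u$ \emph{directly} to a cut-off Landau solution $\td U$ and sets $v=\td u - \td U$; the local source $g$ in the $v$-equation then has \emph{space-time} mean zero, $\frac1T\int_0^T\int_{\R^3} g\,dx\,dt=0$, precisely because the momentum fluxes of $\td u$ and $\td U$ both equal $b$. Lemma~\ref{th:Stokes}(iii) then upgrades the Stokes-propagator contribution of $g$ from $X_1$ to $X_2$, so the contraction closes in $L^\I X_\al$. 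In other words, time-periodicity is exploited once, through a mean-zero cancellation built into the global Duhamel formula, rather than through an orthogonal decomposition.

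Your steps (iii) and (iv) have a circularity that is not resolved and that the paper's route bypasses. In step (iii) you claim $|\bar u - U^b|\le C\e|x|^{-\al}$, but the effective stress in the averaged equation contains $\overline{u'\otimes u'}$, and if at this stage you only know $|u'|\lesssim\e|x|^{-1}$ (which is all step (i) gives), then $|\overline{u'\otimes u'}|\lesssim\e^2|x|^{-2}$. Feeding a $|x|^{-2}$ stress into the stationary Stokes fundamental solution produces only an $|x|^{-1}$ decay for the correction (cf.~Lemma~\ref{th:Stokes}(i) at the endpoint $\al=1$), not $|x|^{-\al}$ with $\al>1$. The smallness $\e^2$ controls the \emph{size} of this contribution but does nothing for its \emph{rate}; the sentence ``absorbed using the smallness $\e^2$'' is where the argument breaks. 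To get a faster rate you would need $|u'|\lesssim\e|x|^{-\ga}$ for some $\ga>(1+\al)/2$, but that is exactly the output of step (iv), which in turn needs the $\bar u$ estimate from step (iii) to control the coupling $\bar u\otimes u'+u'\otimes\bar u$. A genuine proof along your lines must set up a joint fixed point (or an explicit two-sided bootstrap) in which both decay exponents are increased simultaneously; as written, each step presupposes the other's conclusion. There is a secondary issue in step (iv) worth flagging as well: summing the Fourier modes $\hat u_k$ uniformly in $x$ requires quantitative decay of $\hat g_k$ in $k$, and since $\hat g_k$ is quadratic in the Fourier coefficients of a solution whose time regularity is only $W^{1,\I}$, the summability is not automatic. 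The paper's use of the heat propagator on $\R_t$ avoids this altogether.
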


{\it Comments for Theorem \ref{th2}:}

\begin{enumerate}

\item In Theorem \ref{th2} we do not specify a boundary condition for
$u$. All we need is a solution for $|x|>R$. Thus our result
is applicable to the periodic solutions 
constructed in Theorem \ref{th1} (ii), (iii).


\item If $u$ is independent of time, Theorem \ref{th2} recovers
the result of Korolev-Sverak \cite{Kor-Sve} for small exterior
stationary Navier-Stokes flows.
We observe that the decay rate $|u(t,x)| \le C|x|^{-1}$
for small periodic solutions is optimal as well as
the stationary case.

\item 
In order to see that the limit in \eqref{th2-eq2} exists,
denote the integral as $I_j(\rho)$ and then we have 
from \eqref{NS1b} that 
\begin{equation}
I_j(\rho_2) - I_j(\rho_1) =\frac 1T \int_0^T \int_{\rho_1 < |x| <
\rho_2}  f_{0j} \quad (R<\rho_1 < \rho_2).
\end{equation} 
We would like to emphasize
that the choice of $b$ in \eqref{th2-eq3} is independent of $t$.
There is some sort of cancellation effect behind it.

\end{enumerate}

%
%
%
%

Finally we consider the large time asymptotics of solutions of the
problem \eqref{NS1}--\eqref{NS3} 
when the boundary data and the force are time-periodic.  
Borchers and Miyakawa \cite{BM95} showed the stability of stationary
exterior flows 
under small initial perturbation in $L^{3,\I}$ vanishing at boundary. 
For earlier stability results, see e.g.~\cite{Heywood,KO}.
Our third theorem extends these stability results to 
small time-periodic solutions.

\begin{theorem}[Time asymptotics]\label{th4}
For any $T>0$, $\de> 0$, $\eta>0$ and $3\le q_1 < \frac
3{\de+\eta}$, there is $\e_2>0$ such that
the following holds.
Let $u_*$ and $f$ be time-periodic data satisfying
\eqref{iii-est}.
Assume the initial data $u_0$
is asymptotically self-similar in the sense that
there is a (-1)-homogeneous vector
field $\td u_0 \in X_1$
such that $\div \td u_0=0$ and
\begin{equation}
\label{u_0}
\|\tilde u_0\|_{X_1} \le \ve_2,
\quad \nb \td u_0 \in X_2 \quad \textrm{and} \quad
\td \e :=\norm{u_0-\td u_0}_{L^{3,\I}\cap L^{\qz ,\I}}
 \le \e_2.
\end{equation}
Then the solution $u$ in Theorem \ref{th1} (i) for $t\ge 0$
can be decomposed as
\begin{equation}
\label{dcp0}
u=Q+w+r.
\end{equation}
Here $Q$ is the periodic solution for the data $u_*$ and $f$
in Theorem \ref{th1} (iii). The term
$w$ is the forward self-similar solution of the perturbed Navier-Stokes
system in $(0,\I)  \times \R^3$,
\begin{equation}\label{w-eq0}
\pd_t w -\De w + \nb (w \ot w + U^b \ot w + w \ot U^b) + \nb p_0 =0,
\quad \div w = 0,
\end{equation}
with initial data $w_0=\td u_0-U^b$,
where $U^b(x)$ is the Landau solution corresponding to $Q$
given in Theorem \ref{th2} with $\al=1+\de$, and $w$ satisfies
\begin{equation*}
|w(t,x)| \le C \e_2(|x|+\sqrt t)^{-1+\eta}|x|^{-\eta},
\quad (t \in (0,\I);~ x \in \R^3).
\end{equation*}
The term $r$ satisfies the following decay estimate:
\begin{equation*}
\norm{r(t)}_{L^{q,\I}(\Om)}
\le C \td \ve t^{-\frac32(\frac13 - \frac 1q)-\frac{\de}{2}}
 \qquad  \forall t>0, \quad \forall q \in [3,q_1].
\end{equation*}

\end{theorem}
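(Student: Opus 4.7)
The strategy is to build the three-term decomposition $u = Q + w + r$ in order: first take $Q$ from Theorem~\ref{th1}(iii), then construct $w$ by a fixed-point argument on $\R^3$, and finally obtain $r$ by a fixed-point argument on $\Om$ in a weighted Kato-type space. Theorem~\ref{th1}(iii) supplies a time-periodic $Q$ with $|Q(t,x)|\lec \td\e\,|x|^{-1}$ uniformly in $t$, and Theorem~\ref{th2} with $\al=1+\de$ then produces a single time-independent vector $b\in\R^3$ with $|b|\lec\td\e$ such that $|Q(t,x)-U^b(x)|\lec\td\e\,|x|^{-1-\de}$ for all $t\in\R$.

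For $w$, the initial datum $w_0 = \td u_0 - U^b$ is $(-1)$-homogeneous and divergence free since both $\td u_0$ and $U^b$ are, so the scaling invariance of \eqref{w-eq0} forces the mild solution to be forward self-similar. Existence with the stated weighted pointwise bound would be obtained by contraction in the norm $\sup_{t,x}(|x|+\sqrt t)^{1-\eta}|x|^{\eta} |w(t,x)|$, using heat-kernel type estimates for the linearized operator $-\De + U^b\cdot\nb + (\cdot)\cdot\nb U^b$; smallness is guaranteed by $|b|\lec\td\e$ and $\|\td u_0\|_{X_1}\le\e_2$, so this is the standard self-similar construction linearized about the Landau solution.

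A direct computation using \eqref{NS1}, the equation for $Q$, and \eqref{w-eq0} shows that $r = u - Q - w$ satisfies on $\Om$
\begin{equation*}
\pd_t r - \De r + \nb p_r = -\nb\cdot\bkt{u\ot r + r\ot u - r\ot r + (Q-U^b)\ot w + w\ot (Q-U^b)},
\end{equation*}
with $\div r = 0$, $r|_{\pd\Om} = -w|_{\pd\Om}$, and $r|_{t=0} = (u_0-\td u_0) + (U^b - Q(0,\cdot))$. The two data contributions are of size $\td\e$ in $L^{3,\I}\cap L^{\qz,\I}(\Om)$: the first by \eqref{u_0}, the second because $|U^b-Q(0,\cdot)|\lec\td\e|x|^{-1-\de}$ gives both the critical and the subcritical Lorentz bound; the source is small in the same spaces thanks to the product decay $|x|^{-1-\de}(|x|+\sqrt t)^{-1+\eta}|x|^{-\eta}$. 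I would then pass to the Duhamel representation with the exterior-domain Stokes semigroup, absorb the inhomogeneous boundary trace $-w|_{\pd\Om}$ via the extension Lemma~\ref{th:extension2}, and close a contraction in
\begin{equation*}
\|r\|_{\mathcal X_q} := \sup_{t>0} t^{\frac 32(\frac 13 - \frac 1q)+\frac\de2}\|r(t)\|_{L^{q,\I}(\Om)},\qquad q\in [3,q_1],
\end{equation*}
using Lorentz-space bilinear estimates of the type used by Yamazaki~\cite{Yamazaki} and Borchers--Miyakawa~\cite{BM95}.

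The main obstacle is that the drift coefficient $u = Q + w + r$ is not small in any truly scale-critical norm: $U^b$ has size $|b|$ in $L^{3,\I}(\R^3)$, which is exactly critical, and both $w$ and $r$ are $t^{-1/2}$-singular in $L^\I$ as $t\to 0^+$, so the linear-in-$r$ terms are borderline for the classical Kato scheme. The extra factor $t^{-\de/2}$ in $\mathcal X_q$, which supplies the claimed rate, is generated precisely by placing the initial datum of $r$ in the subcritical space $L^{\qz,\I}$; this subcritical gain is what absorbs the non-small linear perturbation by $U^b$ and makes the contraction close. A secondary issue is that $w$ lives on $\R^3$ while the $r$ equation sits on $\Om$, so $w|_{\pd\Om}$ appears as an inhomogeneous Dirichlet datum rather than being subtracted out cleanly.
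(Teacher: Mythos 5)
Your overall plan matches the paper's structure: take $Q$ from Theorem~\ref{th1}(iii), build $w$ on $\R^3$ as the small self-similar solution of the perturbed system (this is Proposition~\ref{th3} in the paper), and then close a weighted Kato-type contraction for the remainder. Your computation of the $r$-equation and of the initial data $r|_{t=0}=(u_0-\td u_0)+(U^b-Q(0))$ is correct, and your identification of the subcritical space $L^{\qz,\I}$ as the source of the extra $t^{-\de/2}$ decay is accurate. The paper actually proves the more general Theorem~\ref{th5} and deduces Theorem~\ref{th4} as the self-similar special case, but the structure is the same.

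However, there is a genuine gap precisely at the point you flag as a ``secondary issue''. You propose to absorb the inhomogeneous trace $r|_{\pd\Om}=-w|_{\pd\Om}$ using the extension operator of Lemma~\ref{th:extension2}: write $r=-E+r'$ with $E$ a compactly supported, divergence-free extension of $w|_{\pd\Om}$, so that $r'$ has zero boundary values. But then $\pd_t E$ appears as a source in the $r'$-equation, and $\pd_t E$ is controlled by $\pd_t w$ near $\pd\Om$ --- which is \emph{not} available. Proposition~\ref{th3} gives pointwise bounds only on $w$ and $\nb w$; as the paper notes (Remark (iii) after the proof of Proposition~\ref{th3}), even $\nb^2 w$ is out of reach of that method, and $\pd_t w$ is of the same order. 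Your scheme as written therefore does not close. The paper avoids this by working with $z=u-Q-\td w$ where $\td w=\zeta w+\hat w$ and $\hat w=\La w$ is the divergence correction from Lemma~\ref{th:4-2}; this also produces the source $\pd_t\hat w$, but the paper's Lemma~\ref{th:4-3} handles
\begin{equation*}
\Psi(\pd_t\hat w)(t)=\int_0^t e^{-(t-s)A}P\,\pd_s\hat w(s)\,ds
\end{equation*}
by an integration by parts in $s$, writing it as $P\hat w(t)-e^{-tA}P\hat w(0)-\int_0^t A^{1/2}e^{-(t-s)A}A^{1/2}P\hat w(s)\,ds$ and then bounding each piece with the semigroup estimates of Lemma~\ref{th:2-1}; this requires only bounds on $w$ and $\nb w$ near $\pd\Om$, not on $\pd_t w$. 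This integration-by-parts device is the crucial missing ingredient in your argument; without it, whichever compactly supported correction you choose (an extension $E$ or the cut-off correction $\hat w$) leaves you with an uncontrollable $\pd_t$ term.

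A second, smaller point: your claim that the subcritical gain ``is what absorbs the non-small linear perturbation by $U^b$'' slightly misattributes the mechanism. The linear drift terms are handled by the scale-critical bilinear estimates in Lorentz spaces (Lemma~\ref{th:2-1}(iii) and the weighted integrals of the paper), for which smallness of $\|u\|_{L^\I L^{3,\I}}$, $\|Q\|_{L^\I L^{3,\I}}$ and $\|U^b\|_{L^{3,\I}}$ is what makes the fixed-point map a contraction; the $L^{\qz,\I}$ datum only upgrades the decay rate from $t^{-\frac32(\frac13-\frac1q)}$ to $t^{-\frac32(\frac13-\frac1q)-\frac\de2}$, it does not by itself supply the smallness needed to close the contraction.
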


{\it Comments for Theorem \ref{th4}:}

\begin{enumerate}
\item
The last assumption in \eqref{u_0} means that
the profile of $u_0$ at spatial infinity  is  
given by $\td u_0$.
Since $\norm{Q(t,x)}_{L^{q,\I}}\le C$, 
$\norm{w(t,x)}_{L^{q,\I}}\le C t^{-\frac 32(\frac 13-\frac
1q)}$ for $3\le q \le q_1$ and
 $r$ decays faster than the other terms as $t\to \I$, our theorem
shows that $u$ is asymptotically equal to the sum of $Q$ and $w$.
In particular, this implies the stability of the periodic solution.

\item
In the case $\Om = \R^3$ with 
zero force and no boundary data, Planchon \cite{Pla} showed that 
if the initial data is asymptotically self-similar, 
the solution is also asymptotically self-similar.
Here the asymptotic profile of the solution at large time 
is given by the self-similar solution of 
the non-perturbed equations \eqref{NS1} with zero force.  
See also \cite{CDW}. 
In our case, the asymptotic profile $w$ satisfies an equation  modified by 
the Landau solution $U^b$, but it is still self-similar.
Unique existence of the self-similar solutions for \eqref{w-eq0}
is considered in Proposition \ref{th3}.

\item If $\td u_0$ is not self-similar,
we still have similar decomposition like \eqref{w-eq0}.
In particular, if $u_0$ is asymptotically \textit{discretely
self-similar}, $w$ becomes forward discretely self-similar.
The notion of the discretely self-similar solution
is discussed in the introduction of section \ref{S4}.
In section \ref{S5}, we will show a more general decomposition. 
See Theorem \ref{th5}. 


\end{enumerate}

Our previous three theorems can be similarly posed in the entire
$\R^3$ with a singular force supported at the origin
\begin{equation}
\pd_t u - \De u + (u \cdot \nb) u +\nb p= b(t) \de_0,  \quad \div
u =0, \quad (x \in \R^3),
\end{equation}
which is studied by Cannone and Karch \cite{CK}.
The problem will be pursued elsewhere. 

This paper is organized as follows.  In section \ref{S2}, we recall
some preliminary results related to the exterior problem.  In
section \ref{S2b} we prove Theorem \ref{th1}.  Section \ref{S3} is
devoted to the proof of Theorem \ref{th2}. In section \ref{S4} we
consider the perturbed Navier-Stokes system \eqref{w-eq0} which plays
an important role in the next section.  Finally we consider the time
asymptotics of the exterior flows and prove Theorem \ref{th4} in
section \ref{S5}.

\medskip
{\it Notation}.\quad $a\lec b$ means $a\le Cb$ for some constant
$C$. $a\sim b$ means $a\lec b \lec a$. $\bka{a}=\sqrt{|a|^2+1}$.
For $1\le p\le \I$, its conjugate exponent $p'$ is defined by
$1/p+1/p'=1$. We denote $\R_+ = (0,\I)$.

\section{Preliminaries}
\label{S2}

In this preparation section we recall Helmholtz decomposition and
Stokes semigroup in \S\ref{S2.1}, define very weak solutions in
\S\ref{S2.2}, prove extension lemmas in \S\ref{S2.3}, show that $f_0$
can be absorbed into $\nb F$ in \S\ref{S2.4}, and prove decay estimates
for Stokes system in \S\ref{S2.5}.

Recall $\Om$ denotes an exterior domain in $\R^3$ with smooth boundary
$\pd \Om$ and unit outer-normal $N$.  We also assume $0 \not \in \bar
\Om$ and $\pd \Om \subset B_{R_1}$.

\subsection{Helmholtz decomposition and Stokes operator}
\label{S2.1}

The Helmholtz decomposition
\begin{equation}
L^q(\Om;\R^n) = L^q_\si(\Om) \oplus G^q(\Om), \quad (1<q<\I),
\end{equation}
for a $C^1$-exterior domain $\Om \subset \R^n$, $n \ge 2$, is well
understood. It is first proved
for $n=3$ by Miyakawa \cite{Miyakawa} and
for all $n \ge 2$ by Simader-Sohr \cite{Simader-Sohr}.
Let $P=P_q$ be the associated Helmholtz
projector from $L^q$ onto $L^q_\si$. Then $P$ can be extended by
interpolation to a bounded projector on each Lorentz space
$L^{q,r}(\Om)$, $1<q<\I$, $1\le r \le \I$, with the Helmholtz
decomposition 
\begin{equation}
L^{q,r}(\Om;\R^n) = L^{q,r}_\si(\Om) \oplus G^{q,r}(\Om),
\end{equation}
where \EQ{ L^{q,r}_\si(\Om) &= \{ u \in L^{q,r}(\Om;\R^n) : \div u
=0, u \cdot N|_{\pd \Om}=0\},
\\
G^{q,r}(\Om) &= \{ \nb p \in L^{q,r}(\Om;\R^n) : p \in
L^{q,r}_{loc}(\bar \Om) \}. } Furthermore, if $1\le r<\I$, then
$(L^{q,r}_\si)^* = L^{(q',r')}_\si$ and $( G^{q,r})^* =
G^{(q',r')}$. See \cite[Th.~5.2]{BM95}.

The Stokes operator $A_q$  on $L^q_\si(\Om)$ with the dense domain
\begin{equation}
\sD(A_q) = L^q_\si(\Om) \cap W^{1,q}_0(\Om) \cap W^{2,q}(\Om)
\end{equation}
is defined by $A_q u = - P_q \De u$ for $u \in \sD(A_q)$. It
extends to a closed linear operator on $L^{q,r}_\si(\Om)$ with
domain
\begin{equation}
\sD(A_{q,r}) = \{u \in L^{q,r}_\si(\Om): \nb^j u \in L^{q,r}(\Om),
j=1,2, u|_{\pd \Om}=0\}.
\end{equation}
One also has
\begin{equation}
\sD(A_{q,r}^{1/2}) = L^{q,r}_\si(\Om)\cap W^{1,(q,r)}_0(\Om).
\end{equation}

The semigroup $\{e^{-tA}\}_{t \ge 0}$ also extends to
$L^{q,r}_\si(\Om)$.

We now recall some estimates in Lorentz spaces.

\begin{lemma}
\label{th:2-0A} Let $\Om \subset \R^3$ be an exterior domain with
smooth boundary. One has
\begin{equation} \norm{A^{1/2}u}_{L^{p,r}}\lec \norm{\nb u}_{L^{p,r}},\quad
(1<p<\I, 1\le r \le \I),\end{equation}
\begin{equation}
\norm{\nb u}_{L^{p,\I}}\lec \norm{A^{1/2} u}_{L^{p,\I}}, \quad
(1<p<3),
\end{equation}
\begin{equation}\label{eq2.9}
\norm{\nb u}_{L^{p,1}}\lec \norm{A^{1/2} u}_{L^{p,1}}, \quad
(1<p\le 3).
\end{equation}
\end{lemma}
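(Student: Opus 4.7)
These three bounds are Lorentz-space analogues of the classical $L^p$ characterization of $\sD(A^{1/2})$ via the gradient norm on exterior domains, and my plan is to reduce to known $L^p$ estimates and then extend by real interpolation, with the endpoint case handled separately.

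First, I would recall the classical strong-type bounds: $\norm{A^{1/2}u}_{L^p(\Om)} \lec \norm{\nb u}_{L^p(\Om)}$ for all $1<p<\I$, via the Helmholtz decomposition and the Stokes resolvent estimates of Giga and Borchers--Sohr, while the reverse $\norm{\nb u}_{L^p(\Om)} \lec \norm{A^{1/2}u}_{L^p(\Om)}$ holds only for $1<p<3$, since its proof rests on the representation $A^{-1/2}=\pi^{-1/2}\int_0^\I t^{-1/2}e^{-tA}\,dt$ together with $\norm{\nb e^{-tA}}_{L^p\to L^p}\lec t^{-1/2}$, a bound which famously fails on exterior domains when $p\ge 3$.

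For (1) I would then apply the Lions--Peetre $K$-method of real interpolation between the strong $L^{p_0}$ and $L^{p_1}$ estimates for any $1<p_0<p<p_1<\I$, thereby obtaining the $L^{p,r}$ bound for every $r\in[1,\I]$. The same real-interpolation scheme, this time restricted to $p_0,p_1\in(1,3)$ straddling $p$, produces (2) at $r=\I$ and the range $1<p<3$ of (3) at $r=1$.

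The one remaining case $(p,r)=(3,1)$ in (3) is the \emph{main obstacle}: interpolation alone cannot reach it, since the $L^{p_1}$ estimate fails for any $p_1\ge 3$. I would handle it by duality, using $(L^{3,1})^{*}=L^{3/2,\I}$ to convert the desired bound $\norm{\nb A^{-1/2}v}_{L^{3,1}}\lec\norm{v}_{L^{3,1}}$ into the equivalent adjoint estimate $\norm{A^{-1/2}P\div G}_{L^{3/2,\I}}\lec \norm{G}_{L^{3/2,\I}}$, which is in turn established from the semigroup representation $A^{-1/2}P\div G=\pi^{-1/2}\int_0^\I t^{-1/2}e^{-tA}P\div G\,dt$ combined with Lorentz-space smoothing estimates of the Stokes semigroup valid at the sub-critical exponent $3/2<3$ (of the type $\norm{e^{-tA}P\div}_{L^{3/2,\I}\to L^{3/2,\I}}\lec t^{-1/2}$), after splitting the integral at $t=1$ and invoking the exterior-domain large-time decay to ensure convergence. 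The genuinely technical input is precisely this $L^{3/2,\I}$ smoothing bound, whose proof requires analyzing the Stokes heat kernel near the boundary in Lorentz spaces; everything else is interpolation and Banach-space duality.
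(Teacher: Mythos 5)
The paper does not prove this lemma; it simply cites it as Theorem~2.1 of Yamazaki \cite{Yamazaki}, which in turn rests on the Lorentz-space Stokes estimates of Borchers--Miyakawa \cite{BM95}. So your task is to judge the attempt on its own merits, and here the interpolation steps are fine but the endpoint treatment has a genuine gap.

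The reductions you describe for the first estimate (all $1<p<\I$, all $r$) and for the second and third estimates on the open range $1<p<3$ are correct: once one grants the strong $L^p$ bounds $\norm{A^{1/2}u}_{L^p}\lec\norm{\nb u}_{L^p}$ for $1<p<\I$ and $\norm{\nb u}_{L^p}\lec\norm{A^{1/2}u}_{L^p}$ for $1<p<3$, real interpolation delivers the Lorentz-space versions off the endpoint. The logical structure of the duality reduction at $(p,r)=(3,1)$ is also sound in principle: $L^{3,1}$ is a predual of $L^{3/2,\I}$, so a bound for the adjoint on $L^{3/2,\I}$, tested on dense subspaces, does yield the stated bound on $L^{3,1}$.

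The gap is in the way you propose to \emph{prove} the adjoint estimate. Writing $A^{-1/2}P\div G=\pi^{-1/2}\int_0^\I t^{-1/2}e^{-tA}P\div G\,dt$ and inserting $\norm{e^{-tA}P\div}_{L^{3/2,\I}\to L^{3/2,\I}}\lec t^{-1/2}$ gives an integrand of size $t^{-1}\norm{G}_{L^{3/2,\I}}$, which is not absolutely integrable near $t=0$ \emph{or} near $t=\I$. Splitting at $t=1$ does not repair the $t\to 0$ singularity at all. The $t\to\I$ claim is also problematic: unlike a bounded domain, an exterior domain gives a Stokes semigroup that is only uniformly bounded, with no spectral gap and no exponential decay, and $L^p$--$L^q$ off-diagonal smoothing with $p<3/2$ cannot be applied since $G$ is only assumed in $L^{3/2,\I}$. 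In short, $A^{-1/2}P\div$ is morally a Riesz transform: its boundedness does not follow from absolute convergence of a Dunford-type integral but requires cancellation (Calder\'on--Zygmund-type or resolvent-commutator-type arguments), which is exactly what Borchers--Miyakawa and Yamazaki supply. As written, your endpoint argument would prove nothing, and since you yourself flag $(p,r)=(3,1)$ as the crux, this is a genuine hole rather than a detail. To close it you should either cite the Borchers--Miyakawa/Yamazaki Riesz-operator bound in $L^{3,1}_\si$ directly, or reproduce the resolvent-based argument that gives $\nb(\la+A)^{-1/2}$ bounded on $L^{3,1}$ via the identity $\nb A^{-1/2}= \nb(1+A)^{-1/2}\cdot(1+A)^{1/2}A^{-1/2}$ together with a separate treatment of the bounded factor.
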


The above is \cite[Theorem~2.1]{Yamazaki}.

For $1\le p < 3$, we define $p^*$ by $1/p^*=1/p-1/3$.

\begin{lemma}
\label{th:2-1} Let $\Om \subset \R^3$ be an exterior domain with
smooth boundary. 

(i) For $1< p \le q < \I$, $\si = \frac 32(\frac 1p-\frac 1q)$ and $ 1
\le r \le \I$, for any $\phi \in L^{p,r}_\si(\Om)$
\begin{equation}
\label{eq:decay1}
\| e^{-tA} \phi\|_{L^{q,r}} \lec t^{-\si}
\|\phi\|_{L^{p,r}}, \quad t>0,
\end{equation}
\begin{equation}
\label{eq:decay2}
\|A^{1/2} e^{-tA} \phi\|_{L^{q,r}} \lec t^{-\si - \frac 12} \|\phi\|_{L^{p,r}},
\quad t>0.
\end{equation}

(ii) For $1<p< 3$, for some $c=c(\Om,p)>0$, for any $\phi \in
L^{p,1}_\si(\Om)$,
\begin{equation}
\label{eq2.12}
\int_0^\I \norm{ A^{1/2} e^{-tA}  \phi}_{L^{p^*,1}}dt \le c
\norm{\phi}_{L^{p,1}}.
\end{equation}

(iii) For $1<p<3$ and $0<\si<1$, for some
$c=c(\Om,p,\si)>0$, for any $\phi \in L^{p,1}_\si(\Om)$,
\begin{equation}
\label{th:4-1} 
\int^t_0 s^{-\si}
\| A^{1/2} e^{-(t-s)A} \phi\|_{L^{p^*,1}}ds \le c t^{-\si}  \| \phi
\|_{L^{p,1}}, \quad t>0.
\end{equation}
If $1<p\le \frac32$, we may replace $A^{1/2}$ in \eqref{eq2.12}
and \eqref{th:4-1} by $\nb$ due to \eqref{eq2.9}.
\end{lemma}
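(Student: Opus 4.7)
The plan is to reduce everything to the classical $L^p$--$L^q$ smoothing estimates for the Stokes semigroup in exterior domains (Iwashita, Borchers--Miyakawa, Maremonti--Solonnikov), and then transfer to Lorentz scales via real interpolation.

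For (i), I would start from the known bounds $\norm{e^{-tA}\phi}_{L^q} \lec t^{-\sigma}\norm{\phi}_{L^p}$ and $\norm{\nabla e^{-tA}\phi}_{L^q} \lec t^{-\sigma-1/2}\norm{\phi}_{L^p}$, valid for $1 < p \le q < \infty$ with $\sigma = \frac{3}{2}(1/p - 1/q)$, and then convert the gradient bound into an $A^{1/2}$ bound via Lemma~\ref{th:2-0A}. To reach the Lorentz-space version, fix $p,q$ and choose two neighboring pairs $(p_i,q_i)$ sharing the same scaling exponent $\sigma$; real interpolation in the source between the corresponding strong-type bounds upgrades $L^p_\sigma \to L^q$ to $L^{p,r}_\sigma \to L^{q,r}$ for every $1 \le r \le \infty$, yielding \eqref{eq:decay1}--\eqref{eq:decay2}.

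For (ii), the naive application of (i) at $q = p^*$ produces a nonintegrable $t^{-1}$ factor, since $\sigma + 1/2 = 1$ exactly. The refinement exploits that $L^{p^*,1}$ is the smallest Lorentz space at exponent $p^*$. Pick $q_0, q_1$ with $p < q_0 < p^* < q_1 < \infty$ and corresponding $\sigma_i = \frac{3}{2}(1/p - 1/q_i)$, so that $\sigma_0 + 1/2 < 1 < \sigma_1 + 1/2$; then $t \mapsto \norm{A^{1/2} e^{-tA}\phi}_{L^{q_0}}$ is integrable near $0$ while $t \mapsto \norm{A^{1/2} e^{-tA}\phi}_{L^{q_1}}$ is integrable near $\infty$. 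Using the real-interpolation identity $L^{p^*,1} = (L^{q_0}, L^{q_1})_{\theta, 1}$ together with a Marcinkiewicz-type argument, these one-sided integrabilities combine into the bound \eqref{eq2.12}. This step essentially follows Theorem~3.1 of Yamazaki~\cite{Yamazaki}.

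For (iii), I would split the $s$-integral at $t/2$. On $(0, t/2)$, the weight integrates to $\int_0^{t/2} s^{-\sigma}\,ds \sim t^{1-\sigma}$, while $\norm{A^{1/2} e^{-(t-s)A}\phi}_{L^{p^*,1}}$ with $t-s \ge t/2$ is controlled by (i) as $\lec (t/2)^{-1} \norm{\phi}_{L^{p,1}}$; the product yields $t^{-\sigma}\norm{\phi}_{L^{p,1}}$. On $(t/2, t)$, $s^{-\sigma} \le (t/2)^{-\sigma}$ factors out, and the remaining $\int_{t/2}^t \norm{A^{1/2} e^{-(t-s)A}\phi}_{L^{p^*,1}}\,ds$ is bounded by $\norm{\phi}_{L^{p,1}}$ via the substitution $\tau = t-s$ and (ii). The main obstacle is the real-interpolation step in (ii), since $q = p^*$ is precisely the crossover exponent at which the direct $L^p$--$L^q$ estimate diverges logarithmically in $t$, and one has to pass through the $K$-functional description of $L^{p^*,1}$ to absorb this divergence into the integration in time.
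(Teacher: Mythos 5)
Your proposal is correct and follows essentially the same route as the paper: for parts (i) and (ii) the paper simply cites Yamazaki's $L^p$--$L^q$ smoothing and integrated Lorentz estimates (your sketch reproduces the interpolation argument behind those citations), and for part (iii) both you and the paper split the integral at $s = t/2$, applying \eqref{eq:decay2} on $(0,t/2)$ where $t-s \ge t/2$ and \eqref{eq2.12} on $(t/2,t)$ after factoring out $s^{-\sigma} \le (t/2)^{-\sigma}$.
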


\begin{proof}
The decay estimates \eqref{eq:decay1} amd \eqref{eq:decay2} are
\cite[(2.4), (2.5)]{Yamazaki}. (Although \cite{Yamazaki} only states
the case $r=1$, it works for other $r$.)  The integral estimate
\eqref{eq2.12} follows from \cite[(2.11)]{Yamazaki}.

To show \eqref{th:4-1}, divide the integral to $s \le t/2$ and $s >
t/2$. Applying \eqref{eq:decay2} for $s \le t/2$ and \eqref{eq2.12}
for $s > t/2$, we get \eqref{th:4-1}.
\end{proof}

\medskip

As a corollary, we have the following lemma. Here $BC_w$ denotes
the class of bounded and weak-star continuous functions.

\begin{lemma}
\label{th:2-1a}
Let $\Om \subset \R^3$ be an exterior domain with
smooth boundary.
Let $I=\R$ or $I=\R_+$. For $f(t) \in L^\I(I; L^{q,
\I}(\Om))$, $3/2 \le q < 3$, define
\begin{equation}
(\mathcal{G}_I f)(t) = \int _{\inf I}^t  e^{-(t-s)A}P\nb f(s) ds, \quad (t
\in I)
\end{equation}
in the sense that, for $p=(q^*)'$, that is, $1/p = 4/3 - 1/q$,
\EQ{\label{GI.wk}((\mathcal{G}_If)(t),\phi)
=
\int_{\inf I}^{t} ( f(s), \nb e^{-(t-s) A}\phi)d s,
 \quad \forall \phi \in L^{p,1}_{\sigma},\ \forall t\in
I.}
Then $\mathcal{G}_I f \in BC_w(\bar I; L^{q^*,\I}_\si(\Om))$, 
and for some
$c=c(\Om,q)>0$
\begin{equation}\label{th:2-1a-eq2}
\norm{\mathcal{G}_I f}_{L^\I(I; L^{q^*, \I}_\si)} \le c \norm{f}_{L^\I(I;
L^{q, \I})}.
\end{equation}
\end{lemma}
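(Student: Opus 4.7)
The plan is to establish \eqref{th:2-1a-eq2} by a duality argument and then derive weak-star continuity from the resulting uniform bound together with strong continuity of the semigroup.

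First I would verify the parameters. Given $3/2\le q<3$, set $p=(q^*)'$, so $1/p=4/3-1/q\in[2/3,1)$, hence $p\in(1,3/2]$; by elementary computation the dual exponent $q'$ (i.e., $1/q'=1-1/q$) coincides with $p^*=(1/p-1/3)^{-1}$. So Lemma \ref{th:2-1}(ii) applies, and because $p\le 3/2$, inequality \eqref{eq2.9} lets us replace $A^{1/2}$ by $\nabla$:
\begin{equation*}
\int_0^\infty \|\nabla e^{-\tau A}\phi\|_{L^{q',1}}\,d\tau \;\lec\; \|\phi\|_{L^{p,1}}, \qquad \phi\in L^{p,1}_\sigma.
\end{equation*}
Then for fixed $t\in I$ and $\phi\in L^{p,1}_\sigma$, pair the formula \eqref{GI.wk} with Hölder in Lorentz spaces (the duality $(L^{q',1})^*=L^{q,\infty}$) to obtain
\begin{equation*}
|((\mathcal{G}_I f)(t),\phi)| \;\le\; \|f\|_{L^\infty(I;L^{q,\infty})}\!\int_{\inf I}^{t}\!\|\nabla e^{-(t-s)A}\phi\|_{L^{q',1}}\,ds \;\lec\; \|f\|_{L^\infty(I;L^{q,\infty})}\|\phi\|_{L^{p,1}}.
\end{equation*}
Since $(L^{p,1}_\sigma)^*=L^{p',\infty}_\sigma=L^{q^*,\infty}_\sigma$ (recalled in \S\ref{S2.1}), this shows $(\mathcal{G}_I f)(t)\in L^{q^*,\infty}_\sigma$ with the desired bound \eqref{th:2-1a-eq2}.

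For weak-star continuity at $t_1<t_2$ in $\bar I$, I would decompose
\begin{equation*}
((\mathcal{G}_I f)(t_2)-(\mathcal{G}_I f)(t_1),\phi) = \int_{t_1}^{t_2}(f(s),\nabla e^{-(t_2-s)A}\phi)\,ds + \int_{\inf I}^{t_1}\!\!\bigl(f(s),\nabla e^{-(t_1-s)A}[e^{-(t_2-t_1)A}-I]\phi\bigr)ds.
\end{equation*}
The first term is controlled by $\|f\|_{L^\infty}\!\int_0^{t_2-t_1}\!\|\nabla e^{-\tau A}\phi\|_{L^{q',1}}d\tau$, which tends to $0$ as $t_2\to t_1$ because the improper integral is finite. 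For the second term, strong continuity of $\{e^{-tA}\}$ on $L^{p,1}_\sigma$ at $t=0$ makes $[e^{-(t_2-t_1)A}-I]\phi\to 0$ in $L^{p,1}$, so its integrand tends pointwise to $0$ in $s$; a straightforward dominated convergence (using $\|\nabla e^{-(t_1-s)A}\|\|[e^{-h A}-I]\phi\|$ uniformly bounded in $h\in[0,1]$) finishes the argument.

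The most delicate step will be the parameter check tying $q'=p^*$ together with the condition $p\le 3/2$ that allows \eqref{eq2.9} to be invoked; once that is in place, the bound and continuity follow by standard semigroup manipulations. A minor care is needed at the endpoint $t=\inf I$ (for $I=\R_+$ this means $t\to 0^+$), where weak-star continuity at $t=0$ reduces to sending the upper limit of the integral to zero and noting that the integrand is dominated by the integrable function $\|\nabla e^{-\tau A}\phi\|_{L^{q',1}}$.
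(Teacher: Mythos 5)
Your proof follows essentially the same route as the paper's: a change of variables and Lorentz duality together with Lemma \ref{th:2-1}(ii) (via \eqref{eq2.9}) give the uniform bound, and the same two-piece splitting gives continuity. One small imprecision to fix in the second piece of the continuity argument: dominated convergence with the operator norm $\|\nabla e^{-\tau A}\|_{L^{p,1}\to L^{q',1}}$ as dominating factor would not work, since that norm behaves like $\tau^{-1}$ near $\tau=0$ and is not integrable; the clean argument (and the one the paper uses) is simply to apply the integral estimate \eqref{eq2.12} (with $\nabla$ in place of $A^{1/2}$) to the vector $\psi=[e^{-(t_2-t_1)A}-I]\phi\in L^{p,1}_\sigma$, giving $\int_0^\infty\|\nabla e^{-\tau A}\psi\|_{L^{q',1}}\,d\tau\lec\|\psi\|_{L^{p,1}}\to 0$ by strong continuity of the semigroup at $0$.
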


Note the integral $\mathcal{G}_I f$ is in weak sense and may not converge
absolutely.

\begin{proof}
For any $\phi \in L^{p,1}_\si$ and $t \in I$,
we have
\EQ{ |(\mathcal{G}_I f(t),\phi)|
& = |\int_{0}^{t-\inf I}
(f(t-\tau),\nb e^{-\tau A}\phi)d \tau|
\\ & \le \int_0^\I \norm{f(t-\tau)}_{L^{q,\I}}
\norm{ \nb e^{-\tau A}\phi}_{L^{q',1}}d\tau . }
Note that $3/2\le q <3$ is equivalent to
$1<p\le 3/2$ and that $q'=p^*$.
Then  Lemma \ref{th:2-1} (ii) implies that
\begin{equation}
|(\mathcal{G}_I f(t),\phi)|
\le c \norm{f}_{L^\I(I;L^{q,\I})} \norm{\phi}_{L^{p,1}},
\end{equation}
which shows $\mathcal{G}_I f \in L^\I(I; L^{q^*, \I}_\si(\Om))$ and
\eqref{th:2-1a-eq2}. To show weak continuity,
a computation similar
to the above shows $\forall t < t' \in I$ and $\e:=t'-t$
\EQ{ & |(\mathcal{G}_I f(t') - \mathcal{G}_I f(t) ,\phi)|
\\
& \le \int_0^\e |(f(t' -\tau), \nb e^{-\tau A}\phi)| d\tau +
\int_0^\I |(f(t-\tau), \nb e^{-\tau A}(e^{-\e A}-1)\phi)| d\tau
\\
& \le \norm{f}_{L^\I L^{q,\I}}
\bke{ \int_0^\e \norm{ \nb
e^{-\tau A} \phi }_{L^{q',1}} d\tau + \int_0^\I  \norm{\nb
e^{-\tau A} (e^{-\e A}-1)\phi}_{L^{q',1}} d\tau}.}
By Lemma \ref{th:2-1} and by the strong continuity at $\ve=0$ of the
Stokes semigroup in $L^{p,1}$, the right hand side converges to 0 as
$\e \to 0_+$, and hence $|(\mathcal{G}_I f(t') - \mathcal{G}_I f(t)
,\phi)| \to 0$ with either $t$ or $t'$ fixed.
\end{proof}

\subsection{Very weak solutions}
\label{S2.2}

In this subsection we define very weak solutions in an exterior domain
$\Om$ with unit outernormal $N$. Our definition is a variant of that
in Amann \cite{Amann} and Farwig, Kozono and Sohr \cite{FKS}. We take
the constant $\ka$ to be $1$ for the Navier-Stokes system, and $\ka=0$
for the Stokes system.

For finite or infinite time interval $J$,
denote the space of test functions
\begin{equation}\label{cDJ.def}
\cD_J := \bket{ w \in C(\Om_J; \R^3):\ 
 \begin{matrix}
\pd_t^a \nb _x ^b w \in
C(\Om_J), (a\le 1; b \le 2); \\
 \div w = 0, \ \supp w \Subset \Om_J,
\ w|_{\pd \Om}=0 \end{matrix} },
\end{equation}
where $\Om_J=\bar J \times \overline{\Om} $ contains the boundary. Thus
$\nb w$ may be nonzero on boundary, and $w(t_0,\cdot)$ may be
nonzero if $t_0=\inf J$ is finite.

 Let $I=\R$ or $I=\R_+$. 
 A vector field $u \in L^2_{loc}(\bar I
\times \bar \Om; \R^3)$ is called a {\it very weak solution} of
the Navier-Stokes system (when $\ka=1$) or of the Stokes system
(when $\ka=0$) with initial datum $u_0$ (with the convention
$u_0=0$ if $I = \R$), boundary datum $u_*$, force $f$ and mass
source $k$ (with sufficient regularity) if
\begin{equation}\label{vws.def}
\begin{split}
&\int_I \bket{- \ip{u}{w_t+\De w}_\Om + \ip{u_*}{N\cdot \nb
w}_{\pd \Om} - \ip{\ka u\ot u}{\nb w}_\Om - \ip{k u}{ w}_\Om} dt \\
&\quad = \ip{u_0}{w(0)}_\Om + \int_I \ip{f}{w}_\Om  dt, \qquad \quad
\forall w \in \cD_{ I}, \end{split}
\end{equation}
and
\begin{equation}\label{vws.def2} \div u(t)=k(t), \quad N \cdot
u(t)|_{\pd \Om} = N \cdot u_*(t) \quad \text{for a.e. } t \in I.
\end{equation}

An elementary calculation shows that for $w\in \cD_J$
\[
N \cdot \nb w = \curl w \times N \quad \text{on }\pd \Om.
\]
Thus \eqref{vws.def} contains a condition only on the tangential
component $N \times u_*$ of $u_*$ on $\pd \Om$, and we have to assume
the additional condition in \eqref{vws.def2} for the normal component
$N \cdot u|_{\pd \Om} = N \cdot u_*$.  Note that, when $\Om$ is a
bounded domain, one needs to assume the compatibility condition
$\int_{\pd \Om} u_*(t)\cdot N dS=\int_\Om k(t)dx$ for a.e.~$t$. When
$\Om$ is an exterior domain, however, this is unnecessary. For the
rest of this paper we take $k=0$.

\subsection{Extension lemmas}
\label{S2.3}

We will use the following extension lemma to extend a given boundary
data $u_*$ with zero flux on every connected component of $\pd \Om$
to a divergence-free vector field of compact support in $\Om$. The
special case we need is $u_*\in C^2(\pd \Om)$ and $\Om$ is a smooth
exterior domain in $\R^3$.

\begin{lemma}
\label{th:extension2} Let $l \in \N$ and $0 \le \al \le 1$. Assume
$\Om$ is  a  domain  in $\R^n$, $n \ge 2$, with compact boundary
$\pd \Om$ of class $C^{l+1,\al}$ and unit outernormal $N$. Suppose $\pd \Om$ has $M$
connected components $\Ga_k$, $k=1,\ldots,M$. For any $\de>0$, there
is a linear map $\cE$ which assigns for each $u_* \in C^{l,\al}(\pd
\Om,\R^n)$ with $\int _{\Gamma_k} u_* \cdot N =0$ for all $k$,  a
vector field $u = \mathcal{E}(u_*)\in C^{l,\al}(\bar \Om)$ so that
\begin{equation}
\label{eq:ext-1} \div u =0, \quad u|_{\pd \Om} = u_*, \quad \supp u
\subset \bar \Om_\de, \quad \norm{u}_{C^{l,\al}(\bar \Om)} \le
C\norm{u_*}_{C^{l,\al}(\pd \Om)}.
\end{equation}
Above $\Om_\de = \{ x \in \Om: \dist(x,\pd \Om)<\de\}$ and
$C=C(\Om,l,\al,\de)$. This linear map restricted to smooth $u_*$ is
the same for all $(l,\al)$ so that $\pd \Om \in C^{l+1,\al}$.
\end{lemma}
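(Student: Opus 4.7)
The plan is to construct $\cE$ by first producing an auxiliary non-solenoidal extension of $u_*$ supported in a thin collar near $\pd\Om$, and then correcting its divergence by a Bogovskii-type right inverse on each connected component of that collar. The zero-flux hypothesis on every $\Ga_k$ is exactly the compatibility condition that makes the correction step solvable.

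First I would fix $\de_0>0$ small enough so that $2\de_0\le\de$ and so that the collar $\Om_{2\de_0}=\{x\in\Om:\dist(x,\pd\Om)<2\de_0\}$ is a smooth tubular neighborhood which decomposes into $M$ disjoint bounded smooth domains $\Om_{2\de_0}^{(k)}$, one around each $\Ga_k$. Next I would apply a fixed linear $C^{l+1,\al}$-boundary extension operator $E$ (for instance a Whitney-type extension written in boundary-fitted coordinates), multiply by a smooth cutoff $\eta$ with $\eta\equiv1$ near $\pd\Om$ and $\supp\eta\subset\bar\Om_{\de_0}$, and set $v=\eta\,E(u_*)$. This produces $v\in C^{l,\al}(\bar\Om)$ with $v|_{\pd\Om}=u_*$, $\supp v\subset\bar\Om_{\de_0}$, and $\norm{v}_{C^{l,\al}(\bar\Om)}\lec\norm{u_*}_{C^{l,\al}(\pd\Om)}$.

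Then I would set $g=\div v\in C^{l-1,\al}(\bar\Om)$, also supported in $\bar\Om_{\de_0}\subset\Om_{2\de_0}$. The crucial observation is that on each $\Om_{2\de_0}^{(k)}$, the field $v$ vanishes on the inner boundary, while the outward unit normal of $\Om_{2\de_0}^{(k)}$ along $\Ga_k$ equals $-N$, so the divergence theorem yields
\[
\int_{\Om_{2\de_0}^{(k)}} g\,dx \;=\; -\int_{\Ga_k}u_*\cdot N\,dS_x \;=\;0.
\]
On each bounded smooth domain $\Om_{2\de_0}^{(k)}$ I would therefore apply a fixed linear Bogovskii-type right inverse $\mathcal{B}_k$ to the divergence, obtaining $w_k\in C^{l,\al}(\bar\Om_{2\de_0}^{(k)})$ with $\div w_k=g|_{\Om_{2\de_0}^{(k)}}$, $w_k|_{\pd\Om_{2\de_0}^{(k)}}=0$, and $\norm{w_k}_{C^{l,\al}}\lec\norm{g}_{C^{l-1,\al}}$. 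Extending each $w_k$ by zero and summing, set $w=\sum_k w_k$, and finally define $\cE(u_*):=v-w$. By construction $\cE(u_*)$ is divergence-free, has trace $u_*$ on $\pd\Om$, is supported in $\bar\Om_{2\de_0}\subset\bar\Om_\de$, and satisfies the required H\"older bound. Linearity is immediate from the linearity of $E$, multiplication by $\eta$, and each $\mathcal{B}_k$. The ``universality'' statement at the end of the lemma (smooth inputs produce the same output regardless of the target $(l,\al)$) holds because none of $E$, $\eta$, or $\mathcal{B}_k$ is tailored to a particular regularity class.

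I expect the main technical obstacle to be the $C^{l,\al}$ estimate for $\mathcal{B}_k$, since Bogovskii's right-inverse is traditionally stated in Sobolev rather than H\"older scales. I would address this either by invoking the H\"older version of Bogovskii's explicit integral formula on star-shaped domains (after a fixed partition of each $\Om_{2\de_0}^{(k)}$ into finitely many star-shaped pieces), as carried out in Galdi's monograph on the stationary Navier--Stokes equations, or by instead solving a Stokes problem $-\De w_k+\nb\pi_k=0$, $\div w_k=g$, $w_k|_{\pd\Om_{2\de_0}^{(k)}}=0$ on each component and appealing to Schauder estimates for the Stokes system, which would in fact yield an extra derivative. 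A secondary subtlety is that the compatibility must be verified on each $\Om_{2\de_0}^{(k)}$ individually rather than globally on $\Om_{2\de_0}$, which is precisely why the hypothesis is stated per connected component $\Ga_k$.
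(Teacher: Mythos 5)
Your reduction --- extend $u_*$ non-solenoidally to a collar, cut off, then correct the divergence by a right inverse of $\div$ (Bogovskii or Stokes--Schauder) on each collar component --- is exactly the scheme the authors describe and reject in Remark~3 of \S\ref{S2.3}. The point of the lemma is that it covers the endpoint exponents $\al=0$ and $\al=1$; the $\al=0$ case is what the paper actually uses, since Theorem~\ref{th1} takes $u_*\in C^2(\pd\Om)$. Your correction step cannot reach these endpoints: the Bogovskii operator is of Calder\'on--Zygmund type, so a bound $\norm{Bg}_{C^{k+1,\al}}\lec\norm{g}_{C^{k,\al}}$ is available for $0<\al<1$ but not at $\al=0$ or $\al=1$, and Schauder estimates for the Stokes problem with prescribed divergence have the same limitation. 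Remark~1 of \S\ref{S2.3} makes exactly this point about \cite{Kap-Pil}, Remark~4 quotes Galdi (\cite[Remark~VIII.4.1]{Galdi2}) as asserting that no linear right inverse of $\div$ is bounded from $C^0$ into $C^1$, and Remark~5 notes that the $C^{k,\al}$ Bogovskii estimate of \cite{Kap-Pil} carries a constant depending on $\dist(\supp f,\pd\Om)>0$ --- whereas your $g=\div(\eta E(u_*))$ is supported all the way to $\pd\Om$. Also, Galdi's Volume~I treats $\div v=f$ in Sobolev, not H\"older, scales, so the reference you invoke does not supply the estimate you need even away from the endpoints.

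The paper's actual proof sidesteps singular integrals. After a partition-of-unity decomposition $u_*=\sum_j u_{*,j}$ into pieces each with zero flux (following \cite[Lemma~III.3.2]{Galdi}) and a graph change of variables to flatten the boundary, the extension is written in closed form as $u=(\pd_3\Phi_1,\,\pd_3\Phi_2,\,-\pd_1\Phi_1-\pd_2\Phi_2)$, where each $\Phi_j$ is a cutoff times the scaled smooth convolution $x_3\int u^j(x'+x_3\xi)\phi(\xi)\,d\xi$ against a fixed $C^\infty$ bump, plus an explicit compactly supported corrector built by one-variable integration (the same primitive construction as in Lemma~\ref{th:f.dec}). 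Derivatives of $u_*$ enter only through smooth-kernel convolutions and compositions, so the operator is manifestly bounded on $C^{l,\al}(\pd\Om)$ for every $0\le\al\le1$. Even with a correct H\"older Bogovskii estimate in hand, your route would recover at most the open range $0<\al<1$, which is already in \cite{Kap-Pil}; the endpoints, which are the new content here, genuinely require the explicit construction.
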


The following is a more general result 
which we will not use.
\begin{lemma}
\label{th:extension3} Let $l \in \N$ and $0 \le \al \le 1$.  Assume
$\Om$ is  a  domain  in $\R^n$, $n \ge 2$, with compact boundary
$\pd \Om$ of class $C^{l+1,\al}$ and unit outernormal $N$. For any open convex set $\om
\subset \R^n$ containing $\pd \Om$,  there is a linear map $\cE$ which
assigns for each $u_* \in C^{l,\al}(\pd \Om,\R^n)$ with $\int _{\pd
\Om} u_* \cdot N =0$, a vector field $u = \mathcal{E}(u_*)\in C^{l,\al}(\bar
\Om)$ so that
\begin{equation}
\label{App:eq:ext-1} \div u =0, \quad u|_{\pd \Om} = u_*, \quad
\supp u \subset \bar \Om \cap \om, \quad \norm{u}_{C^{l,\al}(\Om)}
\le C\norm{u_*}_{C^{l,\al}(\pd \Om)}.
\end{equation}
This linear map restricted to smooth $u_*$ is the same for all
$(l,\al)$ so that $\pd \Om \in C^{l+1,\al}$.
\end{lemma}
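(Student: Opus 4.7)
The plan is to reduce Lemma \ref{th:extension3} to Lemma \ref{th:extension2} by subtracting off auxiliary divergence-free vector fields that redistribute the fluxes of $u_*$ among the components of $\pd\Om$.

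\textbf{Flux redistribution.} Write $\pd\Om=\Ga_1\cup\cdots\cup\Ga_M$ and set $m_k:=\int_{\Ga_k}u_*\cdot N$; the hypothesis is $\sum_k m_k=0$. For each $k=2,\ldots,M$ I would construct a divergence-free field $v_k\in C^\infty(\bar\Om;\R^n)$ of compact support in $\bar\Om\cap\om$ whose boundary fluxes satisfy $\int_{\Ga_k}v_k\cdot N=m_k$, $\int_{\Ga_1}v_k\cdot N=-m_k$, and zero on the remaining $\Ga_j$, together with $\|v_k\|_{C^{l,\al}(\bar\Om)}\lec|m_k|\lec\|u_*\|_{C^{l,\al}(\pd\Om)}$. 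Then $u_*':=u_*-\sum_{k\ge 2}v_k|_{\pd\Om}$ has zero flux on each $\Ga_k$ individually. Choose $\de>0$ small enough that $\Om_\de\subset\om$ (possible since $\pd\Om$ is compact and $\om$ is open containing it) and apply Lemma \ref{th:extension2} to $u_*'$: this produces $u'\in C^{l,\al}(\bar\Om)$ with $\div u'=0$, $u'|_{\pd\Om}=u_*'$, $\supp u'\subset\bar\Om_\de\subset\bar\Om\cap\om$, and the expected norm estimate. The vector field $u:=u'+\sum_{k\ge 2}v_k$ then satisfies \eqref{App:eq:ext-1}.

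\textbf{Construction of $v_k$.} Pick $p_1\in\Ga_1$, $p_k\in\Ga_k$ and a smooth simple curve $\gamma_k$ in $\bar\Om\cap\om$ joining $p_1$ to $p_k$, transverse to $\pd\Om$ at its endpoints, with $\gamma_k((0,1))\subset\Om\cap\om$. The existence of $\gamma_k$ is where convexity of $\om$ is used in an essential way: the line segment $\overline{p_1p_k}$ lies in $\om$, and since $\Om$ is connected with $n\ge 2$, the segment can be deformed within $\om$ to a curve whose interior lies in $\Om$, detouring around any components of $\Om^c$ it happens to cross. Thicken $\gamma_k$ to a smooth tube $T_k\subset\bar\Om\cap\om$ meeting $\pd\Om$ only in two small disks around $p_1$ and $p_k$. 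In local flat coordinates $T_k\simeq\Sigma\times[0,1]$ (with $s=0$ on $\Ga_1$ and $s=1$ on $\Ga_k$), set $v_k=m_k\,\phi(y)\,\pd_s$ with $\phi\in C_c^\infty(\Sigma)$, $\int_\Sigma\phi=1$. Then $\div v_k=0$, the cross-sectional flux of $v_k$ is constantly $m_k$, and after a smooth matching at the endpoints and extension by zero, $v_k\in C^\infty(\bar\Om)$ has the prescribed boundary fluxes (signs arranged by orienting the curve so that $\pd_s$ points outward through $\Ga_k$ and inward through $\Ga_1$).

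\textbf{Main obstacle.} The hard part is the geometric construction of the path $\gamma_k$ and tube $T_k$ inside $\bar\Om\cap\om$: convexity of $\om$ provides the straight connecting segments, and $n\ge 2$ together with the connectedness of $\Om$ allows the detours around intervening components of $\Om^c$ so that the path's interior lies in $\Om$. Linearity of $\cE$ is automatic, and since all the geometric ingredients ($p_k,\gamma_k,T_k,\phi,\de$) and the operator of Lemma \ref{th:extension2} can be fixed once and for all, the restriction of $\cE$ to smooth $u_*$ is the same for all admissible $(l,\al)$.
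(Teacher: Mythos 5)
Your proposal follows the same overall strategy as the paper: build explicit divergence-free ``tube'' fields supported in $\bar\Om\cap\om$ that transport flux between components of $\pd\Om$, subtract them so that each $\Ga_j$ has zero net flux, and then invoke Lemma~\ref{th:extension2}. The differences are in how the tubes are found. (1)~You use a star topology, joining every $\Ga_k$ to $\Ga_1$; the paper instead builds a spanning tree on $\{\Ga_j\}$ by picking, for an increasing chain of index sets $A_k$, a straight segment $L_k$ that minimizes $\dist(\cup_{j\in A_k}\Ga_j,\cup_{j\notin A_k}\Ga_j)$, and then cancels the fluxes in order of a combinatorial ``rank''. Your star avoids the ranking recursion, a small bookkeeping simplification; the resulting linear map is the same in spirit. (2)~The more substantive difference is the path construction, and this is the one place your write-up is not complete. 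The paper's $L_k$ are \emph{straight} minimizing segments, which has two payoffs for free: minimality forces the interior of $L_k$ to avoid $\pd\Om$ (else one could shorten), so $L_k$ minus endpoints lies in $\Om$; and $L_k$ is a chord between two points of $\pd\Om\subset\om$, hence lies in $\om$ by convexity. That is precisely where the convexity hypothesis is used, and no deformation is required. You instead start from the chord $\overline{p_1p_k}$ and assert it ``can be deformed within $\om$ to a curve whose interior lies in $\Om$, detouring around any components of $\Om^c$''. You should justify that the detours stay inside $\om$; this is true but needs an argument (for instance: a $\de$-collar of the compact $\pd\Om$ lies in $\om$ for $\de$ small, and since $\Om$ is a connected domain with smooth compact boundary each component $K$ of $\Om^c$ has connected boundary $\Ga_m$, so the entry and exit points of the chord on $K$ lie on the same $\Ga_m$ and may be joined by an arc inside that collar on the $\Om$-side, using $n\ge2$). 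The paper's choice of minimal-distance segments sidesteps this step entirely and makes the tube field a constant vector on a straight cylinder.
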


{\it Remarks on Lemmas \ref{th:extension2} and \ref{th:extension3}:}
\begin{enumerate}
\item If $u_* \in C^{2, \al}(\pd \Om)$, $0< \al <1$,
Lemmas \ref{th:extension2} and \ref{th:extension3} (Lemma
\ref{th:extension3} in the case $\Om$ is bounded) is proved in
Kapitanski\u{i}-Piletskas \cite{Kap-Pil}. 
In addition to induction in dimension, 
they write $u_i = \pd_i (w_{ij} + \de_{ij}\phi)$ with
$w_{ij}$ anti-symmetric and estimate Newtonian potentials in
H\"older spaces. This is why they do not allow $\al=0$ or 
$\al=1$, which are allowed in Lemmas \ref{th:extension2} and
\ref{th:extension3}. 

\item In case $\Om$ is an exterior domain, the extension constructed
in \cite{Kap-Pil} does not have compact support unless one further
assumes $\int _{\Gamma_k} u_* \cdot N =0$ for all $k$.
See also Kozono-Yanagisawa \cite{KY09} for extensions in more general domains.

\item A related problem is the construction of a vector field $v$
for a given $f:\Om \to \R$ satisfying $\int_{\Om}f = 0$ so that
\begin{equation} \label{divvf}
\div v = f,\quad (x\in \Om),\quad v|_{\pd \Om}=0,\quad \norm{\nb v}
\le C\norm{f}
\end{equation}
for suitable norms. The extension problem is reduced to
\eqref{divvf} as follows: first extend $u_*$ to $U \in
C^{k,\al}(\Om)$ which may not be divergence-free, then solve
\eqref{divvf} with $f= \div U$, and finally define $u = U - v$. The
problem \eqref{divvf} is solved first for $f \in C^\I_c(\Om)$ and
then for general $f$ by approximation. For $1<q<\I$, this approach
is good for $u_* \in W^{1-1/q,q}(\pd\Om)$, but not suitable for
$u_*\in W^{k-1/q,q}(\pd\Om)$, $k \ge 2$, since $C^\I_c(\Om)$ is not
dense in $W^{k,q} \cap W^{1,q}_0$. Similarly it is not suitable for
$u_* \in C^2(\pd\Om)$.

\item
Remark VIII.4.1 on \cite[p25]{Galdi2} says that there is no linear
map which assigns a $v$ satisfying \eqref{divvf} for a given $f$ so
that $\norm{v}_{L^q} \le c \norm{f}_{W^{-1,q}}$, which we fully
agree. However, we do not understand why it continues to assert
that, ``by the same token'', there is no linear map which assigns a
$v$ for $f \in C(\bar \Om)$ so that $\norm{v}_{C^1} \le c
\norm{f}_{C}$.

\item Theorem 4 of \cite{Kap-Pil} asserts the existence a solution
$v$ of \eqref{divvf} satisfying (for $0<\al<1$)
\begin{equation}
\norm{v}_{C^{k+1,\al}(\Om)} \le C \norm{f}_{C^{k,\al}(\Om)}.
\end{equation}
However, its proof assumes $\dist(\supp f, \pd \Om)>0$ and the
constant $C$ depends on this distance. See \cite[\S6]{Kap-Pil}.
\end{enumerate}

\medskip

{\it Proof of Lemma \ref{th:extension2}.}\quad We will prove the
case $n=3$ for notational simplicity. The general case ($n \ge 2$)
is proved in the same way. Denote $x'=(x_1,x_2)$ and the open square
$K= \{ x'\in \R^2: -1<x_1,x_2<1\}$.

We first consider the case that the support of $u_*$ is on $K \times
\{0\}$ with $\frac 54 K \times (0,1) \subset \Om$. In this case we
have $\int_K u_*^3 = 0$. Extend $u_*(x')=0$ for $ x'\in \R^2
\backslash K$. Choose $\phi \in C^\I(\R^2)$ so that
$\phi(\xi)=\phi(|\xi|)$,  $\phi(\xi)=0$ for $|\xi|<1/8$ and
$|\xi|>1/4$, and $\int _{\R^{2}}\phi(\xi)d\xi=1$. Also choose $\chi
\in C^\I(\R)$ so that $\chi(s)=1$ for $s<1/8$ and $\chi(s)=0$ for
$s>1/4$. We define the extension by
\EQ{ \label{eq:flat-ext} u(x) = &\Big(  \pd_3 \Phi_1, \quad  \pd_3
\Phi_2, \quad -\pd_1 \Phi_1 - \pd_2 \Phi_2 \Big),}
where\footnote{The formula for $u^3$ is from Lemma 2.5. The formula
for $u^1$ and $u^2$ extends the $n=2$ case of
\cite[(7.3)]{Kap-Pil}.}
\EQ{ \label{eq:flat-ext1} \Phi_j(x',x_3) = \chi(x_3) \bkt{ x_3 \int
_{\R^{2}} u^j(x'+ x_3 \xi) \phi(\xi)d\xi +f^j(x')}, \quad (j=1,2),}
and,  with $g(x_1) = \int_\R u_*^3(x_1,\bar x_2) d\bar x_2$,
supported in $| x_1 |\le 1$ and $\int_{-1}^1 g(x_1)dx_1 =0$,
\begin{equation}
\label{eq:flat-ext2} f^1(x') = -\chi'(x_2)\int_{-\I}^{x_1} g(\bar
x_1) d\bar x_1, \quad f^2(x') = \int_{-\I}^{x_2} u_*^3(x_1,\bar x_2)
d\bar x_2  -  g(x_1) (1-\chi(x_2)).
\end{equation}
(See the proof of Lemma \ref{th:f.dec}.) Note  $f^j$  are supported
in $\bar K$, $\div_{\R^2} f = u_*^3$, and $u$ is supported in $\frac
98 \bar K\times [0,1/4]$. One verifies \eqref{eq:ext-1} directly.

We next consider the case that the support of $u_*$ is on a graph
over $K$: $x_3 = h(x')$ for $x'\in K$ with $|h(x')|<1/4$, and $\Om
\cap (\frac 32K \times [-1,1])$ lies in $x_3>h(x')$. We have
\begin{equation}
N = \frac 1\ell (\pd_1 h, \ \pd_2 h, \ -1), \quad \ell = \sqrt{|\nb
h|^2+1}, \quad dA = \ell dx_1 dx_2,
\end{equation}
and $0=\int_{\pd \Om} u_* \cdot N dA= \int_K u_* \cdot(\pd_1 h, \
\pd_2 h, \ -1) dx_1 dx_2 $. Define new coordinates
\begin{equation}
  y_1 = x_1,\quad  y_2 = x_2,\quad  y_3 = x_3 - h(x'),
\end{equation}
and, for each vector field defined for $x\in \bar \Om \cap (K \times
[0,1])$ define a new vector field for $y\in K \times [-3/4,3/4]$:
\begin{equation}  \label{eq:u-U} U(y) = (u^1(x), \quad u^2(x), \quad u^3(x) -
u^1(x) \pd_1 h(x') - u^2(x) \pd_2 h(x')).
\end{equation}
For a given $u_*$ defined on $\pd \Om$,  we define $U_*$ on $K$ by
the same formula. Then $U_*$ satisfies $\int_K U_*^3 = 0$ and we can
extend $U$ from $U_*$ by the previous case so that $\div U=0$ and
$\supp U \subset K \times [-1/2,1/2]$. We finally define $u$ from
$U$ by \eqref{eq:u-U}. One checks directly that $\div_x u =\div_y U
= 0$.

For the general case, we can find finitely many balls $B_j$,
$j=1,\ldots,J$, with same radius $\rho$ and concentric balls $B_j^*$
with double radius $2\rho$, so that $\pd \Om \subset \cup_{j} B_j$,
$\cup_{j} B_j^* \subset B_{R_1}$, and that $\pd \Om \cap B_j^*$ is a
graph in $B_j^*$ in direction $\mu_j$, and belongs to $|(x-x_j)\cdot
\mu_j| \le \rho/8$ where $x_j$ is the center of $B_j$. Choose a
smooth partition of unity $\{\eta_j\}_j$ on $\pd \Om$ so that
$\sum_j \eta_j = 1$ on $\pd \Om$ and $\supp \eta_j \subset B_j \cap
\pd \Om$.

For any given $u_*$ with $\int_{\Ga_k} u_*\cdot N=0$ for all $k$, we
claim we can decompose
\begin{equation}
u_* = \sum_j u_{*,j}, \quad \supp u_{*,j} \subset \pd \Om \cap B_j,
\quad \int _{\pd \Om} u_{*,j} \cdot N=0,
\end{equation}
with suitable estimates. When $\pd \Om$ has only one component
($M=1$), choose $\chi_k$, $k=1, \ldots, J-1$, so that $\chi_k \in
C^\I_c( \pd\Om \cap B_k \cap \cup_{j>k} B_j)$ with $\int_{\pd \Om}
\chi_k = 1$. We define $u_{*,j}$ by induction: Let $U_1 = u_*$, and
for $j=1,\ldots,J-1$,
\begin{equation}
u_{*,j} = U_{j}\eta_j - \chi_j \int_{\pd \Om} U_{j}\eta_j, \quad
U_{j+1} = U_{j} -u_{*,j}
\end{equation}
and $u_{*,J} = U_J$. When $\pd \Om$ has more than one component, we
can perform the above decomposition for each component. (The above
decomposition follows the proof of \cite[Lemma III.3.2]{Galdi}.)

Since $\int _{\pd \Om} u_{*,j} \cdot N=0$, by the second case above
with a suitable rescaling we can extend $u_{*,j}$ to divergence-free
$u_j$ supported in $B_j^*\cap \bar \Om_\de$. We now define $\mathcal{E}(u_*) =
\sum_j u_j$. \myendproof

\medskip

{\it Remark}. \quad If $u_*$ is prescribed in $K\times \{0,1\}$ with
$u_*(x',0)= (0,0,v_0(x'))$,  $u_*(x',1)= (0,0,v_1(x'))$ and
$\int_{K} v_0(x')dx' =\int_{K} v_1(x')dx'$, we can extend $u_*$ to
$K\times [0,1]$ by
\EQ{ u(x) &= (\psi'(x_3) f^1(x'),\ \psi'(x_3) f^2(x'), \
(1-\psi(x_3)) v_0(x') + \psi(x_3)v_1(x')),}
where $\psi = 1 -\chi$,
\begin{equation}
f^1(x') = \psi'(x_2)\int_{-\I}^{x_1} g(\bar x_1) d\bar x_1, \quad
f^2(x') = \int_{-\I}^{x_2} (v_0-v_1)(x_1,\bar x_2) d\bar x_2  -
g(x_1) \psi(x_2),
\end{equation}
and $g(x_1) = \int_\R (v_0-v_1)(x_1,s) ds$.

\medskip

{\it Proof of Lemma \ref{th:extension3}}. \quad  The case $M =1$
follows from Lemma \ref{th:extension2}, with $\de>0$ chosen so small
that $\Om_\de \subset \om$. Suppose now $M \ge 2$.

Claim: We can choose $M-1$ line segments $L_k$ connecting $\Ga_j$,
so that $L_k$ intersect $\pd \Om$ only at end points and at right
angles, and each $\Ga_j$ intersects at least one $L_k$. This is
chosen by induction. Let $A_1=\{ 1\}$ and $A_{M} = \{ 1, \ldots,
M\}$. Suppose $A_k$, a subset of $A_M$ has been chosen for $k=1,
\ldots, M-1$. Since $\pd \Om$ is compact, one can find a line
segment $L_{k}$ minimizing the distance
\begin{equation}
\dist( \cup_{j \in A_k} \Ga_j, \, \cup_{j
\not \in A_k}\Ga_j ).
\end{equation}
Clearly $L_k$ intersects $\cup_{j \in A_k} \Ga_j$ only at one
endpoint and $L_k$ intersects $\cup_{j \not \in A_k} \Ga_j$, say
$\Ga_{j_{k+1}}$, at the other endpoint. Moreover, $L_k$ without
endpoints is inside $\Om$. Now let $A_{k+1} = A_k \cup \{ j_{k+1}\}$
and continue.

Once all $L_k$ have been chosen, we give a rank of $\Ga_j$ and $L_k$
as follows: $\Ga_j$ is assigned rank one if it intersects only one
$L_k$. This line segment is renumbered as $L_j$ and also assigned
rank 1. Let $A$ be the set of $\Ga_j$ and $L_k$ without those of
rank one. Rank one boundaries and line segments in this reduced set
are assigned rank 2. We continue this exercise until we exhaust all
$\Ga_j$ and $L_k$.

Fix $0<\e\ll 1$. For each $k$, let $e_k$ be the unit direction
vector of $L_k$ (unique up to a negative sign), let $L_k^\e$ be
those points on the line extending $L_k$ with distance to $L_k$ less
than $\e$, let $r_k(x)=\dist(x, L_k^\e)$, and let $ U^k(x) =\e^{1-n}
\chi(\e^{-1} r_k(x)) e_k$ where $\chi: \R \to [0,1]$ is a fixed
smooth cut-off function with $\chi(r)=1$ for $r<1$ and $\chi(r)=0$
for $r>2$. For $\e>0$ sufficiently small (depending on $\pd \Om$),
we have $\int _{\Ga_j} U^k \cdot N = \pm (\int_{\R^{n-1}}
\chi(|x'|)dx' + o(1)) \not =0$ if $\Ga_j$ is one of the two
boundaries intersecting $L_k$. We also have $\supp U^k \subset \om$.

Now, for a given $u_*$, let $u_*^0 = u_*$ and define recursively for
$k \ge 1$:
\begin{equation} u_*^k(x) = u_*^{k-1}(x) - \sum_{\Ga_j\text{ of rank }k} c_jU^j(x), \quad
c_j =  \frac {\int_{\Ga_j} u_*^{k-1} \cdot N}{\int_{\Ga_j} U^j \cdot
N}.
\end{equation}
Denote the final one as $u_*^M$. One verifies that $\int
_{\Ga_k}u_*^M \cdot N=0$ for all $k$ and
$\norm{u_*^M}_{C^{l,\al}}+\sum_j |c_j| \le
C\norm{u_*^M}_{C^{l,\al}}$. We now define
\begin{equation}
\mathcal{E}(u_*) = \mathcal{E}(u_*^M) + \sum_{j=1}^M c_jU^j(x),
\end{equation}
where $\mathcal{E}(u_*^M)$ is defined by Lemma \ref{th:extension2}. One
verifies that $\norm{\mathcal{E}(u_*)}_{C^{l,\al}(\Om)}\le C
\norm{u_*^M}_{C^{l,\al}}+C\sum_j |c_j| \le
C\norm{u_*^M}_{C^{l,\al}}$, and that the support of 
$\mathcal{E}(u_*)$ is
inside $\om$. \myendproof

\subsection{Source terms}
\label{S2.4}

In this subsection we show that any force $f(x)$ in $\R^n$ decaying
like $|x|^{-n-\e}$ as infinity can be decomposed in the form $f=f_0+
\nb \cdot F$ with $\supp f_0$ being compact.

\begin{lemma}\label{th:f.dec}
   If $f(x)$ is defined in $\R^n$ with $|f(x)| \lec
\bka{x}^{-a}$, $a>n\ge 1$, then for any $R>0$ we can rewrite
\begin{equation}
\label{f.dec} f(x) = f_0(x) + \sum_{j=1}^n \pd_j F_j(x)
\end{equation}
where $\supp f_0\in B_R(0)$ and $|F_j(x)| \lec
\bka{x}^{-a+1}\norm{\bka{x}^a f(x)}_{L^\I}$.
\end{lemma}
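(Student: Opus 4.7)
The plan is to construct $F$ via a radial antiderivative of $f$ and cut it off near the origin, absorbing the resulting error into a compactly supported $f_0$. The key idea is the Poincar\'e-type identity: if $F=\hat x\,G$ is a purely radial vector field with $\hat x=x/|x|$ and $G$ a scalar, then $\nabla\cdot F=r^{1-n}\pd_r(r^{n-1}G)$, so setting
\[
G(x):=-r^{1-n}\int_r^\I s^{n-1}f(s\hat x)\,ds\qquad (r:=|x|)
\]
gives $\nabla\cdot(\hat x\,G)=f$ pointwise on $\R^n\setminus\{0\}$. The hypothesis $a>n$ is exactly what makes the defining integral converge absolutely.

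The pointwise estimate on $G$ follows by splitting into two regimes: for $r\ge 1$ one has $\int_r^\I s^{n-1-a}\,ds = r^{n-a}/(a-n)$, yielding the sharp decay $|G|\lesssim r^{1-a}$; for $R/2\le r\le 1$ the integral is uniformly bounded and $r^{1-n}$ is controlled by a constant depending on $R$ and $n$. Combining,
\[
|G(x)|\le C(R,n,a)\,\bka{x}^{1-a}\norm{\bka{x}^a f}_{L^\I}\qquad (|x|\ge R/2).
\]
To remove the singularity at $0$, choose $\chi\in C^\I(\R^n)$ with $\chi\equiv 0$ on $B_{R/2}$ and $\chi\equiv 1$ on $\R^n\setminus B_R$, and set $F_j(x):=\chi(x)\hat x_j G(x)$ (extended by $0$ at the origin). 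The bound on $G$ immediately gives the required $|F_j(x)|\lesssim \bka{x}^{-a+1}\norm{\bka{x}^a f}_{L^\I}$.

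Finally, a distributional product-rule computation using $\nabla\cdot(\hat x\,G)=f$ off the origin yields
\[
\sum_{j=1}^n \pd_j F_j = \chi\,f + (\nabla\chi\cdot\hat x)\,G,
\]
so that $f_0:=f-\sum_j \pd_j F_j=(1-\chi)f-(\nabla\chi\cdot\hat x)\,G$ vanishes on $\R^n\setminus B_R$ and hence has support in $\overline{B_R(0)}$. The only delicate point is making sense of $\hat x\,G$ near the origin, which is entirely circumvented by the cutoff $\chi$ vanishing on $B_{R/2}$; no genuine obstacle arises, and the proof reduces to the radial formula for $G$ combined with the elementary split of the defining integral.
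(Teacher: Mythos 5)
Your construction is correct and takes a genuinely different route from the paper. The paper's proof proceeds by coordinate-wise antiderivatives: first for $f$ supported in $B_2$, integrating successively in $x_n, x_{n-1},\ldots$ and absorbing the boundary terms into lower-dimensional errors; then for general decaying $f$ it uses a dyadic annular decomposition and rescales each piece to the $B_2$ case. Your radial antiderivative $G(x)=-r^{1-n}\int_r^\I s^{n-1}f(s\hat x)\,ds$, cut off near the origin, avoids both the recursion in dimension and the dyadic summation: the hypothesis $a>n$ enters exactly once, as the convergence condition for the defining integral, and the decay $|G|\lec\bka{x}^{1-a}$ follows from a single split of the range of integration. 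The one point worth spelling out is that $\nb\cdot(\hat x\,G)=f$ in $\cD'(\R^n\setminus\{0\})$ rests on an integration by parts in $r$ along rays, justified by the absolute continuity of $r\mapsto r^{n-1}G(r\omega)$ (using $f\in L^1$, which follows from $a>n$); after that the smooth product rule with $\chi$ gives the claimed $f_0$, and, as you note, the cutoff circumvents the singularity of $\hat x$ at the origin. Your argument is shorter and more transparent; the paper's construction additionally produces $F_{kj}$ supported in slabs $\{|x_j|<2^{k+1}\}$ at each dyadic scale, an anisotropic support feature the lemma does not assert and which your radial $F$ does not have.
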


\myremark (i) If  $f(x)$ is defined in an exterior domain $\Om
\subset \R^n$ with $0 \not \in \bar \Om$, we may extend $f$ by zero
to entire $\R^n$, and choose $R>0$ so small that $\Om \cap B_R =
\emptyset$. Then the first term $f_0(x)$ in the decomposition
\eqref{f.dec} can be ignored for $x \in \Om$.

(ii) If $f = g_0 + \nb \cdot G$ with $|g_0(x)| \lec \bka{x}^{-a}$,
$a>n\ge 1$, and $|G(x)| \lec \bka{x}^{-a+1}$, we may assume $g_0 $
has compact support by decomposing $g_{0i} =f_{0i}(x) + \sum_{j=1}^n
\pd_j F_{ji}(x)$ as in the lemma and absorbing $F$ to $G$. If $f$ is
defined in an exterior domain we may assume $g_0=0$ by (i) above.

\begin{proof}
We will prove the case $n=3$. The proof for the general case is
similar. By rescaling we may assume $R=1$ and $\norm{\bka{x}^a
f(x)}_{L^\I}=1$. We first consider the case $\supp f \subset B_2$.
Choose a smooth $\psi(t):\R \to \R$, $\psi(t)=1$ for
$t>R_1=3^{-1/2}$ and $\psi(t)=0$ for $t<-R_1$. The region $\{x:
|x_j|<R_1\} \subset B_1$. Define
\begin{equation}
G_3(x) = \int_{-\I}^{x_3} f(x_1,x_2,\bar x_3)\,d\bar x_3, \quad
I_3(x_1,x_2) = \lim_{x_3\to +\I} G_3(x),
\end{equation}
\begin{equation}
F_3(x) = G_3(x) - \psi(x_3) I_3(x_1,x_2).
\end{equation}
Then  $f = \pd_3 F_3 + \psi'(x_3)I_3(x_1,x_2)$. Define
\begin{equation}
G_2(x_1,x_2) = \int_{-\I}^{x_2} I_3(x_1,\bar x_2)\,d\bar x_2, \quad
I_2(x_1) = \lim_{x_2\to +\I} G_2(x_1,x_2),
\end{equation}
\begin{equation}
F_2(x) = \psi'(x_3)[G_2(x_1,x_2) - \psi(x_2) I_2(x_1)].
\end{equation}
Then $f = \pd_3 F_3 + \pd_2 F_2+ \psi'(x_2)\psi'(x_3)I_2(x_1)$.
Define
\begin{equation}
G_1(x_1) = \int_{-\I}^{x_1} I_2(\bar x_1)\,d\bar x_1, \quad I =
\lim_{x_1\to +\I} G_1(x_1) = \int_{\R^3} f,
\end{equation}
\begin{equation}
F_1(x) = \psi'(x_2)\psi'(x_3)[G_1(x_1) - \psi(x_1) I].
\end{equation}
Then
\begin{equation}
f = \pd_3 F_3 + \pd_2 F_2+\pd_1 F_1+ \psi'(x_1)
\psi'(x_2)\psi'(x_3)I.
\end{equation}
The last term vanishes if $\int f=0$. Also note $F_j$ are at least
as regular as $f$,
\begin{equation}
|F_j(x)|\lec \|f\|_{L^\I}, \quad \text{and }\supp F_j\subset \{x:
|x_j|<2\}.
\end{equation}

For the general case, choose  a smooth $\ph(x)$ supported in $\{x\in
\R^3:2^{-1} \le |x| \le 2\}$ satisfying $\ph(x)>0$ for $2^{-1} < |x|
< 2$ and $\sum_{k=-\I}^\I \ph(2^{-k}x) = 1$ for $x\not =0$. Let
$\ph_k(x) =\ph(2^{-k}x)$ for $k\ge 0$ and $\psi(x) = 1-
\sum_{k=1}^\I \ph(2^{-k}x) $. Define
\begin{equation}
a_k = \frac 1{2^k \int \ph}\int f \sum_{j>k}\ph_j, \quad (k \ge 0).
\end{equation}
Then $|a_k| \lec 2^{-ka}$. Decompose
\begin{equation}
f=\sum_{k=0}^\infty f_k, \quad f_0= f\psi + a_{0}\ph_0, \quad f_k=
(f + a_{k})\ph_k -a_{k-1}\ph_{k-1}, \quad (k \ge 1) .
\end{equation}
One verifies that
\begin{equation}
\int f_0 =\int f,   \quad \int f_k = 0, \quad (k \ge 1) ,
\end{equation}
$\supp f_k \subset \{x: 2^{k-2} \le |x|\le 2^{k+1}\}$, and
$|f_k(x)|\le C \bka{x}^{-a}$. The rescaled function $\td f_k(x) =
f_k(2^k x)$ is supported in $B_2$. The previous case gives the
existence of $F_{kj}$, $j=1,2,3$, so that
\begin{equation}
f_k = \sum_{j=1}^3 \pd_j F_{kj}, \quad |F_{kj}(x)|\le C
\bka{x}^{-a+1}.
\end{equation}
Thus \begin{equation}f=f_0 + \sum_{j=1}^3 \pd_j F_j, \quad F_j =
\sum_{k=1}^\I F_{kj}, \quad |F_j(x)| \le \sum_{k \gec \ln |x|}
2^{-(a-1)k} \lec \bka{x}^{-a+1}.
\end{equation}
This shows the lemma.
\end{proof}

\subsection{Stokes system}
\label{S2.5}

In this subsection we proved decay estimates for the Stokes system
in $\R \times \R^3$. Recall the fundamental solutions of the Stokes
system (see \cite{Oseen} and \cite[page 27]{Solonnikov})
\begin{equation}
\label{Stokes-tensor}
S_{ij}(t,x)=\Gamma(t,x)\delta_{ij}+\frac{1}{4\pi}\frac{\partial^2}{\partial
x_i\partial x_j}\int_{\R^3}\frac{\Gamma(t,y)}{\abs{x-y}}dy,\qquad
Q_j(t,x)=\frac{\delta(t)}{4\pi}\frac{x_j}{\abs{x}^3},
\end{equation}
where $\Gamma$ is the fundamental solution of the heat equation. It
is known in \cite[Theorem 1]{Solonnikov} that the tensor
$S=(S_{ij})$ satisfies the following estimates:
\begin{equation}\label{estimate-T}
\abs{D^\ell_x\pd^k_t S(t,x)}\leq C_{k,l}
(|x|+\sqrt{t})^{-3-\ell-2k}, \quad (\ell,k \ge 0),
\end{equation}
where $D^\ell_x$ indicates $\ell$-th order derivatives with respect
to the variable $x$.

A solution of the non-stationary Stokes system in $\R^3$,
\begin{equation}\label{Stokes}
\pd_t w - \De w+ \nb p =g + \nb G, \quad \div w =0, \quad ((t,x) \in
\R^{1+3}),
\end{equation}
if $g$ and $G$ have sufficient decay, is given by
\begin{equation}
w = \La(g) + \Th(G),
\end{equation}
where
\begin{align}
\label{Phi-def} (\La
g)_i(t,x)&=\int_0^{\infty}\int_{\R^3}S_{ij}(s,x-y)
g_{j}(y,t-s)dyds. \\
\label{Psi-def} (\Th G)_i(t,x) &
=-\int_0^{\infty}\int_{\R^3}\partial_{k}S_{ij}(s,x-y)
G_{jk}(y,t-s)dyds. \end{align}

\begin{lemma} \label{th:Stokes}

\begin{itemize}
\item[(i)]\,\, Suppose $G \in L^\I X_{\al+1}$, $0<\al<2$.
Then  $\Th G \in L^\I X_\al$ with $\norm{\Th G}_{L^\I X_\al} \le
C_\al \norm{G}_{L^\I X_{\al+1}}$.
\item[(ii)]\,\, Suppose $g \in L^\I X_{\al+2}$, $1<\al$. Then  $\La g \in L^\I X_{1}$ with $\norm{\La g}_{L^\I
X_{1}} \le C_\al \norm{g}_{L^\I X_{\al+2}}$.

\item[(iii)]\,\, Suppose  $g \in L^\I X_{\al+2}$, $1<\al$, $g$ is time periodic of
period $T>0$, and $\int_0^T \int g dx \,dt=0$. Then  $\La g \in L^\I
X_{2}$ with $\norm{\La g}_{L^\I X_{2}} \le C_\al \norm{g}_{L^\I
X_{\al+2}}$.

\end{itemize}
\end{lemma}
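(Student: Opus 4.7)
I would prove each part by direct pointwise estimation of the representations \eqref{Phi-def}--\eqref{Psi-def} using the kernel bound \eqref{estimate-T}. After performing the $s$-integration (by rescaling $\sigma=\sqrt s$, $\int_0^\infty (|z|+\sqrt s)^{-\gamma}\,ds\simeq |z|^{2-\gamma}$ for $\gamma>2$), parts (i) and (ii) reduce to the spatial convolution inequalities
\begin{equation*}
\int_{\R^3}\frac{dy}{|x-y|^{2}(1+|y|)^{\alpha+1}}\lesssim (1+|x|)^{-\alpha},\ 0<\alpha<2;\quad
\int_{\R^3}\frac{dy}{|x-y|(1+|y|)^{\alpha+2}}\lesssim(1+|x|)^{-1},\ \alpha>1.
\end{equation*}
Both are handled by splitting $\R^3$ into the four standard regions $\{|y|<|x|/2\}$, $\{|x-y|<|x|/2\}$, $\{|y|>2|x|\}$, and the intermediate annulus. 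In (i), the range $0<\alpha<2$ makes the inner region contribute $(1+|x|)^{-2}\cdot(1+|x|)^{2-\alpha}=(1+|x|)^{-\alpha}$, matching the other regions. In (ii), the condition $\alpha+2>3$ renders $(1+|y|)^{-\alpha-2}$ globally integrable, so the inner region supplies the dominant $(1+|x|)^{-1}$ while the remaining regions yield the better $(1+|x|)^{-\alpha}$.

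For part (iii), the improvement from $X_1$ to $X_2$ comes from combining the time-periodicity with the zero space-time mean. I would decompose $g=\bar g+g_{osc}$ with $\bar g(y):=T^{-1}\int_0^T g(y,\tau)\,d\tau$, so $\int_{\R^3}\bar g\,dy=0$ by hypothesis and $g_{osc}$ has pointwise-zero time mean. The stationary piece equals the Oseen convolution $\Lambda\bar g=\mathcal{O}\ast\bar g$, with $\mathcal{O}(z):=\int_0^\infty S(s,z)\,ds$ satisfying $|\mathcal{O}(z)|\lesssim|z|^{-1}$ and $|\nabla\mathcal{O}(z)|\lesssim|z|^{-2}$. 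The zero mean allows the substitution $\mathcal{O}(x-y)\mapsto \mathcal{O}(x-y)-\mathcal{O}(x)$; by the mean value theorem, this is $O(|y||x|^{-2})$ on $\{|y|<|x|/2\}$, and since $\alpha>1$ makes $|y|(1+|y|)^{-\alpha-2}$ integrable, one obtains $(1+|x|)^{-2}$ on the inner region. For the oscillatory piece, I would periodize the $s$-integration to write
\begin{equation*}
\Lambda g_{osc}(t,x)=\int_0^T\int_{\R^3}\tilde S(s,x-y)\,g_{osc}(y,t-s)\,dy\,ds,\qquad \tilde S(s,z):=\sum_{k=0}^\infty S(s+kT,z),
\end{equation*}
with convergence coming from \eqref{estimate-T} and $\sum k^{-3/2}<\infty$. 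Since $\int_0^T g_{osc}(y,t-s)\,ds=0$, one may subtract $\tilde S(T/2,z)$ from $\tilde S(s,z)$ inside the integrand; the bound $|\tilde S(s,z)-\tilde S(T/2,z)|\lesssim \int_0^T |\partial_s\tilde S(\tau,z)|\,d\tau$ together with the time-derivative estimate in \eqref{estimate-T} improves the effective kernel by extra powers of $|z|^{-1}$ at large $|z|$, which is enough to produce $(1+|x|)^{-2}$ after spatial integration.

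The main obstacle is the near region $\{|y-x|\le|x|/2\}$ in (iii): crude pointwise kernel bounds there yield only $(1+|x|)^{-\alpha}$, which falls short of the target whenever $1<\alpha<2$. Closing this region requires careful exploitation of either the tensorial structure of the Oseen kernel (for $\Lambda\bar g$) or the periodic cancellation in $s$ (for $\Lambda g_{osc}$), with the gains from the two pieces combining to deliver the uniform $(1+|x|)^{-2}$ bound.
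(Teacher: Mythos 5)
Parts (i) and (ii) of your proposal are essentially correct. Part (i) coincides with the paper's argument. For part (ii) you take a more direct route—integrate the kernel bound \eqref{estimate-T} in $s$ first to get $\int_0^\I|S(s,z)|\,ds\lesssim|z|^{-1}$, then estimate the spatial convolution—whereas the paper first invokes Lemma \ref{th:f.dec} to write $g=g_0+\nb\cdot\tilde G$ with $\supp g_0\subset B_1$ and $\tilde G\in X_{\al+1}$, reduces $\nb\cdot\tilde G$ to part (i), and handles the local piece $g_0$ by Young's inequality for small $|x|$ and by compact support for $|x|>2$. Both routes yield the $X_1$ bound; the paper's choice of a \emph{spatial} decomposition is deliberate, because it is exactly what makes part (iii) close.

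Part (iii) has a genuine gap, which you partly flag yourself. Your split $g=\bar g+g_{osc}$ is a temporal decomposition, and both of your cancellation devices—replacing $\mathcal{O}(x-y)$ by $\mathcal{O}(x-y)-\mathcal{O}(x)$, and subtracting $\tilde S(T/2,\cdot)$ in the periodized kernel—are cancellations relative to the center $x$. They help when $|y|\ll|x|$ or $|y|\gg|x|$, but give nothing when $|y|\sim|x|$, which is the near region $\{|x-y|<|x|/2\}$ you identify as the obstacle. In fact even the \emph{inner} region $\{|y|<|x|/2\}$ does not close for $1<\al<2$: your mean-value bound gives $|y|\,|x|^{-2}$, and $\int_{|y|<|x|/2}|y|\bka{y}^{-\al-2}\,dy\sim|x|^{2-\al}$, so the contribution is $|x|^{-\al}$, not $|x|^{-2}$. (The integrability of $|y|\bka{y}^{-\al-2}$ that you invoke holds in $\R^3$ only for $\al>2$, not $\al>1$.) The missing idea is to perform the spatial decomposition of Lemma \ref{th:f.dec} first, as in the paper's part (ii): reduce to a $g_0$ supported in $B_1$ and absorb the $\nb\cdot\tilde G$ remainder into part (i). With $\supp g_0\subset B_1$ and $|x|>2$, both problematic regions are empty, $\int|y|\,|g_0(y)|\,dy$ is trivially finite, and the mean-value estimate $|S(s,x-y)-S(s,x)|\lesssim|y|(|x|+\sqrt s)^{-4}$ integrated in $s$ gives $|x|^{-2}$ directly—your $\bar g/g_{osc}$ split and periodized-kernel machinery become unnecessary.
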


Note that time periodicity is only assumed in (iii).

\begin{proof}
(i) For $\Th G$ defined by \eqref{Psi-def}, using the estimate
\eqref{estimate-T} and integrating in time, we obtain
\begin{equation}
\abs{\Theta G(t,x)}\leq \int_0^{\infty}\int_{\R^3}
\frac{C}{(\abs{x-y}+\sqrt{s})^{4}} \bka{y}^{-\alpha-1} dyds \lec
I_\al(x),
\end{equation}
where
\begin{equation}
I_\al(x)= \int_{\R^3}\frac{1}{\abs{x-y}^{2}}\bka{y}^{-\alpha-1}dy,
\quad (\al>0).
\end{equation}
If $|x|<10$, then $I_\al(x)\lec 1$. If $|x|\ge 10$, then
\begin{equation} I_\al(x) \le \int_{|y|<1}\frac {dy}{|x|^2} + \int
\frac {dy}{|x-y|^2 |y|^{\al+1}} = C|x|^{-2} + C |x|^{-\al},
\end{equation}
if $0<\alpha<2$. We conclude $I_\al(x) \le C_\al \bka{x}^{-\alpha}$
and the estimate for $\Th G$.

\medskip

(ii)  Due to Lemma \ref{th:f.dec}, $g$ can be decomposed as
$g=g_0+\nabla \tilde{G}$, where ${\rm{supp}}\,g_0\in B_1(0)$ and
$\norm{\tilde{G}}_{L^{\infty} X_{\al+1}}\lec \norm{g}_{L^{\infty}
X_{\al+2}}$. Since $\nabla \tilde{G}$ can be treated as in the case
(i), we consider the only the case that $g_0\neq 0$ and
$\tilde{G}=0$. We may assume $\norm{g_0}_{L^\I X_{\al+2}}\le 1$. By
Young's convolution inequality
\begin{equation}\label{w.Linfty}
\begin{split}
   |\La g(t,x)| &\lec \int_0^{\infty} \norm{S_{ij}(s,\cdot)}_{L^2 +
L^\infty} \norm{ g_{j}(\cdot,t-s)}_{L^2 \cap L^1}ds
\\
&\lec \int_0^\infty \min (s^{-3/4}, s^{-3/2}) ds \lec 1.
\end{split}
\end{equation}
For $|x|>2$ we have
\begin{equation}
\begin{split}
\abs{\La g(t,x)} &\leq \int_0^{\infty}\int_{\abs{y}\leq 1}\abs{
S_{ij} (x- y, s)}\abs{g_j(y,t-s)}dyds
\\
&\leq C\int_0^{\infty}\int_{\abs{y}\leq 1}\frac{1}{(\abs{x
}+\sqrt{s})^3}dyds\leq
C\int_0^{\infty}\frac{1}{(\abs{x}+\sqrt{s})^3}ds\leq
\frac{C}{\abs{x}}.
\end{split}
\end{equation}

(iii) Continue part (ii) and assume $|x|>2$. Using $\int_0^T \int
g(t,y)dydt=0$, we have
\begin{equation}
(\La g)_i(t,x) = \int_0^{\infty}\int_{\R^3}
\bke{S_{ij}(s,x-y)-S_{ij}(s,x)} g_{j}(y,t-s)dyds.
\end{equation}
Using the mean-value formula and \eqref{estimate-T},
\begin{equation}
\begin{split}
\abs{(\La g)(t,x)} &\leq \int_0^{\infty}\int_{\abs{y}\leq
1}\abs{\nabla_y S_{ij} (s,x-\theta y)\cdot y}\abs{g_j(t-s,y)}dyds
\\
&\leq C\int_0^{\infty}\int_{\abs{y}\leq 1}\frac{1}{(\abs{x-\theta
y}+\sqrt{s})^4}dyds\leq
C\int_0^{\infty}\frac{1}{(\abs{x}+\sqrt{s})^4}ds\leq
\frac{C}{\abs{x}^2},
\end{split}
\end{equation}
where $\theta=\th(x,y,s)\in [0,1]$, and we have used that $g_j$ is
bounded and supported in $\abs{y}\leq 1$ and $\abs{x}>2 $. This
shows $\norm{\Lambda g}_{L^\I X_2}\le C_\al \norm{g}_{L^\I X_{\al+2}}$ and
completes the proof.
\end{proof}

\section{Existence of flows with non-decaying boundary data}
\label{S2b}

In this section we prove Theorem \ref{th1}. By Lemma \ref{th:f.dec},
we may assume $f_0=0$ by absorbing $f_0$ into $F$.

\subsection{Construction of a mild solution}
\label{sec2.6}

{\it Proof of part (i)}.\quad We first consider the case $I = \R$
and denote
\begin{equation}
\mathcal{X}^q =L^\I(\R, L^{q,\I}).
\end{equation}

Let $\Ga_k$, $k=1,\ldots, M$, be the connected components of $\pd
\Om$, and  choose $x_k$ in the bounded open region enclosed by
$\Ga_k$. Let $H_0 (x) =\frac 1{|x|} $.
Note
\begin{equation}
\label{V0-bdint} \int_{\Ga_k} \nabla H_0(x-x_l) \cdot N = 4\pi\de_{kl},
\quad \forall k,l.
\end{equation}
For each $t\in I$, let
\begin{equation}
\label{H-def}
H(t,x) = \sum g_k(t) H_0(x-x_k), \quad g_k(t) =
\frac 1{4\pi}\int_{\Ga_k} u_*(t) \cdot N.
\end{equation}
Note $H$ is harmonic in $x\in \Om$.  Denote $u_*^\shortparallel(t)
=u_*(t) - \nb H(t)|_{\pd \Om}$.  By \eqref{V0-bdint} and
\eqref{H-def},
\begin{equation}
\int_{\Ga_k} u_*^\shortparallel(t) \cdot N = 0, \quad \forall k,\
\forall t.
\end{equation}
By Lemma \ref{th:extension2}, we can define $E_1=
\mathcal{E}(u_*^\shortparallel(t)) \in C^2(\bar \Om)$ with compact support
in $B_{R_1}$.

Decompose
\begin{equation}
\label{up-dec}
u = v+E, \quad E = \nb H +E_1, \quad p = \pi - \pd_t H.
\end{equation}

Then we have $E |_{\pd \Om} = u_*$, and $v, \pi$ satisfy
\begin{equation}
\label{S2:v-eq} v_t - \De v + \nb \pi = g(v),
\quad \div v = 0,
\quad v |_{\pd \Om} = 0,
\end{equation}
where
\begin{equation}
g(v) = \nb \cdot [F - (E+v)\ot ( E +v)] - \pd_tE_1 + \De E_1
\end{equation}
and $E$ satisfies $\|E\|_{L^{3/2,\I}} \le
\|u_*\|_{C^2(\pd \Omega)}$.
Moreover, since $\pd_t E_1$ is compactly supported in $\Omega$,
we can apply Lemma \ref{th:f.dec} so that
$\pd_t E_1$ can be written as
$$
\pd _t E_1=\nb F_1
\qquad {\rm with}
\qquad
\|F_1\|_{L^{q, \I}} \lec
\|\langle \cdot \rangle^a \pd_t E_1\|_{L^\I}
\lec \|\pd_t u_*\|_{C^2(\pd \Omega)}
$$
for $q \ge 3/2$ and $a>3$.
Therefore we can rewrite
$$
g(v) = \nb G(v) \qquad {\rm with} \qquad
\|G(v)\|_{\mathcal{X}^{3/2}} \lec \|F\|_{\mathcal{X}^{3/2}}
 +(\|u_*\|+ \|v\|_{\mathcal{X}^3})^2
+\|u_*\|.
$$
Here, $\|u_*\| = \|u_*\|_{W^{1,\I}(\R,C^2(\pd \Omega))}$,
and we do not distinguish $\nb$ and $\nb \cdot$ , since
the difference does not play any roles in our argument.

Following \cite{Yamazaki}, we consider the fixed point problem 
\begin{equation}\label{eq2.72}
v = \Phi v, \quad (\Phi v) (t) := \int_{-\I}^t e^{-(t-s)A} P \nb
G(v)(s)\,ds,
\end{equation}
where the integral is defined weakly
in the sense of Lemma \ref{th:2-1a}.

Applying Lemma \ref{th:2-1a}, we have
\EQ{
\|\Phi v  \|_{\mathcal{X}^3}
 \lec
\| G(v) \|_{\mathcal{X}^{3/2}}
\lec
\|F\|_{\mathcal{X}^{3/2}}
+(\|u_*\|+ \|v\|_{\mathcal{X}^3})^2 +\|u_*\|.
}

Similarly,
\begin{equation}
\begin{split}
\norm{\Phi v - \Phi \td v}_{ \mathcal{X}^3} &\lec (\norm{u_*} + \norm{ v}_{
\mathcal{X}^3}+\norm{\td v}_{ \mathcal{X}^3}) \norm{ v - \td v}_{ \mathcal{X}^3}.
\end{split}
\end{equation}
Thus, if
\begin{equation}
\begin{split}
\e= \norm{F}_{\mathcal{X}^{3/2} }  + \norm{u_*}
\end{split}
\end{equation}
is sufficiently small, there is a unique fixed point of $\Phi$ in
the class
\begin{equation}
\label{v-est1} \norm{v}_{\mathcal{X}^3} \lec
\e,\end{equation}
and $v \in BC_w(\R, L^{3,\I}_\si)$ by Lemma \ref{th:2-1a}.

The case $I = (0,\I)$ is proved similarly: We define $g_k(t)$ and
$E_1(t,x)$ as above for $t \ge 0$, and decompose $u=v+E$ as in
\eqref{up-dec}. The vector field $v(t,x)$ satisfies the initial
condition
\begin{equation}\label{v0.def}
v(0,x) = v_0(x) := u_0(x) - E(0,x).
\end{equation}
It is a fixed point of $\Phi$ where

\begin{equation}
\Phi v= e^{-tA} v_0 + \int_{0}^t e^{-(t-s)A} P \nb
G(v)(s)\,ds.
\end{equation}

Denote $\mathcal{X}_+^q = L^\I(\R_+,L^{q,\I})$. By similar estimates, there
is a unique fixed point of $\Phi$ in the class
\begin{equation} 
\norm{v}_{\mathcal{X}_+^3} \le C \e, \quad \e=\norm{v_0}_{L^{3,\I}} +
\norm{F}_{\mathcal{X}_+^{3/2} } + \norm{u_*},
\end{equation}
if $\e$ is sufficiently small, and $v \in
BC_w([0,\I), L^{3,\I}_\si)$ by Lemma \ref{th:2-1a}.

\subsection{Equivalence to a very weak solution}
\label{sec2.7}

We now verify that our solution $u=v+E$ is the unique very weak
solution in the class $u \in BC_w(\bar I,L^{3,\I}(\Om))$ of \eqref{NS1}
with the given data $u_0$, $u_*$, and $f$, with $I=\R$ or
$I=(0,\I)$. 

\medskip

{\bf A  mild solution is a very weak solution}. 
We first show that our solution is a very weak solution. Clearly $u$
satisfies \eqref{vws.def2} and $\div u=0$. It suffices to show
\eqref{vws.def}.

By divergence theorem, for $w \in \cD_I$, defined in \eqref{cDJ.def},
and fixed $t$,
\[
\ip{u_*}{N\cdot \nb
w}_{\pd \Om} = \ip{E}{N\cdot \nb
w}_{\pd \Om} =(E,\De w) + (\nb E, \nb w) = (E,\De w) - (\De E, w).
\]
Thus, upon writing $u=E+v$, \eqref{vws.def} is equivalent to
\begin{equation}
\int_I - \ip{v}{w_t+\De w} dt = \ip{v_0}{w(0)}- \int_I \ip{F_2}{\nb
  w} dt,\quad \forall w\in \cD_I
\end{equation}
for $F_2 =F_2(v)= F - (E+v)\ot (E+v) + \nb E_1 - F_1$. Recall $\nb F_1
= \pd_t E_1$. Above $v_0=0$ if $I=\R$, and $v_0$ is given by
\eqref{v0.def} if $I=(0,\I)$.

Let 
\begin{equation}
\cD :=\{ \psi \in C^2_c( \bar
\Om;\R^3), \div \psi=0, \psi|_{\pd \Om}=0\}.
\end{equation}
Note  $\nb\psi|_{\pd \Om} $ may not be zero for $\psi \in \cD$, and $\cD$ is a dense subset of $D(A)$.
Choosing $w(t,x) = \th(t)\psi(x)$ with $\th(t) \in C^1_c(\bar I)$ and
$\psi \in \cD$,
\eqref{vws.def} implies
\begin{equation}\label{eq2.82}
\int_I - \ip{v}{\psi} \th' dt = \ip{v_0}{\psi }\th(0)+\int_I \th
\bkt{\bka{v,\De \psi}-\ip{F_2}{\nb \psi}} dt.
\end{equation}
In turn this also implies \eqref{vws.def} since linear combinations of
such $w(t,x)= \th(t)\psi(x)$ with $\th \in C^1_c(\bar I)$ and $\psi \in \cD$ is
dense in $\cD_I$ in the norm $\sum_{a\le 1, b \le 2}\norm{\pd_t^a
  \nb_x^b w}_{C^0}$.%

Denote $t_0 = \inf I$.
Being a mild solution $v = \Phi v$  of \eqref{eq2.72} means $v(t)$ satisfies
\begin{equation}
(v(t),\psi) = (e^{-tA}v_0,\psi) +\int_{t_0}^t (-F_2(s),\nb e^{-(t-s)A} \psi)ds,\quad \forall
\psi \in L^{3/2,1}_\si(\Om), \forall t\in I.
\end{equation}
Plug this in the left side of
\eqref{eq2.82}. For $\psi \in \cD$ and $\th(t) \in C^2_c((t_0,\I))$, since $
      [\th'(t) - \e^{-1}(\th(t)- \th(t-\e))] \to 0$ as $\e \to 0_+$
      uniformly in $t$,
\begin{equation}\label{eq2.85}
\int_I - \ip{v}{\psi} \th' dt =  \lim_{\e \to 0+}
\e^{-1}\int_I  \ip{v(t)}{\psi} [-\th(t)+ \th(t-\e)] dt =
\end{equation}
\[
=
 \lim_{\e \to 0+}
\e^{-1}\int_I  \ip{v(t+\e) - v(t)}{\psi} \th(t) dt
= I_\e + \textit{II}_\e
\]
where
\begin{equation}\begin{split}
I_\e= &\int \th(t) (e^{-tA} v_0, \e^{-1}(e^{-\e A}-1)\psi)dt\\
&+\int \th(t) \int_{t_0}^t (-F_2(s),\nb e^{-(t-s)A} \e^{-1}(e^{-\e A}-1)\psi)ds\,dt,
\end{split}\end{equation}
\[
\textit{II}_\e = \e^{-1} \int \th(t)\int_t^{t+\e} (-F_2(s),\nb e^{-(t+\e-s)A}\psi)ds\,dt.
\]

For $\psi \in \cD \subset D(A)$, $- A\psi = P\De \psi \in L^{3/2,1}_\si$ and
\[
\norm{ \e^{-1}(e^{-\e A}-1)\psi + A\psi}_{L^{3/2,1}_\si} \to 0\quad \text{as}\quad
\e\to 0_+.
\]
Thus,  by Lemma \ref{th:2-1} and $v \in BC_w(\bar I; L^{3,\I}_\si )$,
\begin{equation}\begin{split}
I_\e \to & \int \th(t) (e^{-tA}v_0, P\De \psi)\,dt + \int \th(t) \int_{t_0}^t (-F_2(s),\nb e^{-(t-s)A} P\De \psi)ds\,dt 
\\
& = \int \th(t) (v(t), P\De \psi) dt = \int \th(t) (v(t), \De \psi) dt
,
\end{split}\end{equation}
as $\e\to 0+$.  Moreover,
with $\tau=(t+\e-s)$ and by Fubini theorem,
\[
\textit{II}_\e = \e^{-1} \int \th(t) \int_0^{\e} (-F_2(t+\e-\tau),\nb e^{-\tau
  A}\psi)d\tau\,dt
\]
\[
 =- \e^{-1}  \int_0^{\e} (\int \th(t) F_2(t+\e-\tau)dt,\, \nb e^{-\tau
  A}\psi) \, d\tau.
\]
Since $ \int \th(t) F_2(t+\e-\tau)dt = \int \th(t-\e+\tau) F_2(t)dt$
and $\th(t-\e+\tau) -\th(t) \to 0$ as $\e \to 0$ uniformly in $t$ and
$\tau\in [0, \e]$, we get
\[
\textit{II}_\e + \e^{-1}  \int_0^{\e} (\int \th(t) F_2(t)dt,\, \nb e^{-\tau
  A}\psi) \, d\tau \to 0
\]
as $\e \to 0_+$.  Since $e^{-\tau A}\psi$ is
continuous in $L^{3/2,1}_\si$, we get
\begin{equation}\label{eq2.94}
\lim_{\e \to 0_+} \textit{II}_\e = - (\int \th(t) F_2(t)dt,\, \nb\psi).
\end{equation}
The above shows \eqref{eq2.82} for $\psi \in \cD$ and $\th(t)\in
C^2_c((t_0,\I))$.
By approximation, \eqref{eq2.82} is also valid for $\psi \in
\cD$ and $\th(t)\in C^1_c([t_0,\I))$.

\medskip
 
{\bf A very weak solution is a mild solution}.  We next show that if
$u$ is a small very weak solution in the class $BC_w(\bar I,
L^{3,\I})$, then $v=u-E$ is a mild solution. This implies their
equivalence, and also the uniqueness of small very weak solutions in
the above class.

Let $u$ be small very weak solution in the class $BC_w(\bar I,
L^{3,\I})$, then $v=u-E$ satisfies \eqref{eq2.82}. Taking $\th(t) \in
C^1_c(I)$, \eqref{eq2.82} is the weak form of
\[
\frac{d}{dt}(v(t),\psi) = (v,\De \psi) - (F_2 ,\nb \psi).
\]
If we now take $\psi=\psi(t) = e^{-(t_1-t)A}\eta$ with $t \le t_1$ and
$\eta \in \cD$, we get $\frac {d}{dt}\psi(t) = -\De \psi(t)$ and
$\frac{d}{dt}(v(t),e^{-(t_1-t)A}\eta) = - (F_2 ,\nb
e^{-(t_1-t)A}\eta)$ weakly, that is 
\[
-\int (v(t),e^{-(t_1-t)A}\eta) \th'(t)dt = - \int  (F_2(t) ,\nb
e^{-(t_1-t)A}\eta)\th(t)\, dt
\]
for any $\th(t) \in C^1_c((\inf I, t_1))$. Take $t_0 \in [\inf I ,
t_1)$ and $\th(t) = \phi(\frac{t-t_1}{\e} +1) -
\phi(\frac{t-t_0}{\e})$ where $0<\e\ll 1$, $\phi(t) \in C^1(\R)$,
$\phi(t)=1$ for $t<0$ and $\phi(t)=0$ for $t>1$. Send $\e \to 0_+$. We
have $\th(t) \to 1_{t_0<t<t_1}$ and, by continuity of
$(v(t),e^{-(t_1-t)A}\eta)$,
\[
 (v(t_1),\eta)- (v(t_0),e^{-(t_1-t_0)A}\eta) = - \int_{t_0}^{t_1}  (F_2 (t),\nb
e^{-(t_1-t)A}\eta)\, dt.
\]

  If $I=\R$, we send $t_0 \to -\I$. Since $v \in
L^\I(\R;L^{3,\I})$ and $e^{-(t_1-t_0)A}\eta \to 0$ in $L^{3/2,1}_\si$
as $t_0 \to -\I$, we get
\[
 (v(t_1),\eta) = - \int_{-\I}^{t_1}  (F_2 (t),\nb
e^{-(t_1-t)A}\eta)\, dt, \quad \forall t_1\in I, \ \forall \eta \in \cD.
\]
If $I =(0,\I)$, we take $t_0\to 0_+$ and get
\[
 (v(t_1),\eta)- (v_0,e^{-t_1A}\eta) = - \int_{0}^{t_1}  (F_2 (t),\nb
e^{-(t_1-t)A}\eta)\, dt, \quad \forall t_1\in I, \ \forall \eta \in \cD.
\]
In either case $v(t)$ is a mild solution and is the unique one we
constructed in the previous subsection.

\subsection{Periodicity and spatial decay}
\label{sec2.8}

{\it Proof of part (ii)}.\quad Since $u(t+T,x)$ is another
solution of \eqref{NS1}--\eqref{NS2} with the same data and
estimates, we have $u(t+T,x)=u(t,x)$ by the uniqueness of part
(i).

\medskip

{\it Proof of part (iii)}.\quad  We now assume $I = \R$  and the
stronger assumption of (iii).

We first prove some a priori bounds. By Lemma \ref{th:2-1a}
again for a fixed $q \in (3/2,3)$ and $q^*$,
 we have
\begin{equation}
\begin{split}
\norm{v}_{\mathcal{X}^{q^*}}
&\lec  \norm{G(v)}_{  \mathcal{X}^{q}} \\
&\lec \norm{F}_{\mathcal{X}^{q}} + (\norm{E}_{ \mathcal{X}^{3}} +\norm{v}_{ \mathcal{X}^{3}})
(\norm{E}_{ \mathcal{X}^{q^*}} +\norm{v}_{  \mathcal{X}^{q^*}}) +\norm{u_*}.
\end{split}
\end{equation}
Hence, we obtain,
\begin{equation}
\label{v-est2} \norm{v}_{ \mathcal{X}^{q^*}} \lec \norm{F}_{ \mathcal{X}^{q}} +
\norm{u_*}
\end{equation}
when $\e$ is sufficiently small.

We now estimate $w= v_t$. It satisfies
\begin{equation}
w_t - \De w + \nb \pi_t =\nb \pd_t G,
\quad \div w = 0, \quad w |_{\pd \Om}
= 0,
\end{equation}
where
\begin{equation}
\pd_t G =
(F_t - (E_t +w)\ot (E+v)- (E+v) \ot (E_t +w)) -
\pd_t^2 F_1 + \pd_t \Delta E_{1}.
\end{equation}
For fixed $3/2< q<3$ we have
\begin{equation}
\norm{\pd_t E}_{ \mathcal{X}^{q^*}\cap \mathcal{X}^3} +
\norm{\pd_t^2 F_1}_{\mathcal{X}^{q} \cap \mathcal{X}^{3/2}}
+\norm{\pd_t \nb E_1}_{\mathcal{X}^q \cap \mathcal{X}^{3/2}}
\lec \norm{\pd_t u_*}_{W^{1,\I}(\R, C^2(\pd
\Om))}.
\end{equation}
Similar estimates show $v_t \in \mathcal{X}^3 \cap \mathcal{X}^{q^*}$ and
\begin{equation}
\norm{v_t}_{\mathcal{X}^3 \cap \mathcal{X}^{q^*}} \lec \e_1 :=\norm{F}_{W^{1,\I}(\R,
L^{3/2} \cap L^{q})} + \norm{u_*}_{W^{2,\I}(\R, C^2(\pd \Om))}.
\end{equation}

From now on we choose $q=2$ and thus $q^*=6$ for convenience.

Rewrite equation \eqref{S2:v-eq} for $v$ as a time-independent
Stokes system,
\begin{equation}
- \De v + \nb \pi =  g(v) - v_t, \quad \div v = 0.
\end{equation}
We have  $v \in \mathcal{X}^{3}\cap \mathcal{X}^{q^*}$, $F \in \mathcal{X}^{3/2} \cap \mathcal{X}^{q}$,
and $ - \pd_t E_1 + \De E_1- v_t \in \mathcal{X}^{3}\cap \mathcal{X}^{q^*}$.  
By bootstrapping as in \cite{KK06}, also see \cite{MT}, 
locally $E$ and $\pi-\phi(t)$ with a suitable $\phi(t)$ are bounded uniformly in
$t$. Let
\begin{equation}\label{Rcut.def}
\Rcut :=\{x\in \R^3:\quad R_1 < |x| < R_1+1\}.
\end{equation}
We now replace $\pi(t,x)$ by $\pi(t,x)-\phi(t)$ with a suitable
$\phi(t)$ so that
\begin{equation}
|v(t,x)|+ |\nb v(t,x)|+ |\pi(t,x)| \le C \e_1, \quad \forall t \in
\R, \quad \forall x \in \Rcut.
\end{equation}

Back to $u=E+v$ and $p = \pi-\pd_t H$, we have
\begin{equation}
\norm{u}_{\mathcal{X}^{3}\cap \mathcal{X}^{q^*}} \le C \td \e; \end{equation}
\begin{equation}  |u(t,x)|+ |\nb u(t,x)|+ |p(t,x)| \le C
\td \e, \quad \forall t \in \R, \quad \forall  x \in \Rcut.
\end{equation}
The pointwise decay estimate of (iii) follows from the following
lemma.\myendproof
\medskip

\begin{lemma}[Spatial decay of time-entire solutions]
\label{Spatial-decay} For any $R_1>0$, there are $\e_1>0$ and
$C>0$ such that the following holds. Suppose $u,p$ is a solution
of \eqref{NS1} 
with force
$f=f_0+\nb \cdot F$ for $R_1<|x|<\infty$ and $t \in \R$, and
satisfies
\begin{equation}
\begin{split} \e:= &\sup_{t\in \R} \bigg\{\sup _{|x|>R_1}
\bket{|x|^{2+\al} |f_0(t,x)| +|x|^{1+\al} |F(t,x)|} +
\norm{u(t,\cdot)}_{L^{3,\I}(|x|>R_1)} +
\\
&+  \sup _{R_1<|x|<R_1+1} \bket{|u(t,x)| +|\nb u(t,x)| +|p(t,x)| }
\bigg \}\le \e_1.\end{split}
\end{equation}
We do not assume any boundary condition at $|x|=R_1$. Then
\begin{equation}
|u(t,x)| \le C \e |x|^{-1}, \quad (|x| >R_1;\ t \in \R).
\end{equation}
\end{lemma}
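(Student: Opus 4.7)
The plan is to rescale at each faraway point $x_0$, reducing the pointwise decay of $u$ to a small-data local $L^\I$ bound for the rescaled solution on a unit annulus. Fix $x_0$ with $R_0:=|x_0|\ge 4R_1$ and set $v(s,y):=R_0\,u(R_0^2 s,R_0 y)$, $q(s,y):=R_0^2\,p(R_0^2 s,R_0 y)$, defined on $\R\times\{|y|>R_1/R_0\}\supset\R\times\{|y|>1/4\}$. Then $v$ solves \eqref{NS1} with rescaled force $\tilde f_0(s,y)=R_0^3 f_0(R_0^2s,R_0y)$ and tensor $\tilde F(s,y)=R_0^2 F(R_0^2s,R_0y)$, satisfying $|\tilde f_0|\lec\e R_0^{1-\al}|y|^{-2-\al}$, $|\tilde F|\lec\e R_0^{1-\al}|y|^{-1-\al}$; and by scale invariance $\|v(s,\cdot)\|_{L^{3,\I}(|y|>1/4)}\le\e$ uniformly in $s$. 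For $\al\ge 1$ the forces on $\{|y|\sim 1\}$ are uniformly bounded by $C\e$ in the relevant $X_\cdot$-norms. Unscaling, any bound $\sup_{s\in\R}|v(s,y)|\lec\e$ for $|y|=1$ yields $|u(t,x_0)|\lec\e/|x_0|$, which is the claim.

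\textbf{Cutoff, Oseen representation, and bootstrap.} To prove the $L^\I$ bound for $v$ I would pick $\chi\in C^\I(\R^3)$ with $\chi\equiv 0$ on $|y|\le 1/3$ and $\chi\equiv 1$ on $|y|\ge 2$, and use the scale-invariant local bounds on $v$, $\nb v$, $q$ on $\{|y|\sim 1/4\}$ inherited from the hypothesis on $\Rcut$ to correct the divergence $v\cdot\nb\chi$ by a compactly supported Bogovskii-type field $w$ (absorbing the radial flux $\int_{|y|=r}v\cdot N$ into a bounded harmonic Landau-type term, which is well-defined since $\div v=0$). Then $\bar v:=\chi v+w-(\text{flux correction})$ is divergence-free on $\R^3$ and solves a Stokes system $\pd_s\bar v-\De\bar v+\nb\bar q=\nb\cdot\tilde G+\tilde g$ with $\tilde G=\chi\tilde F-\bar v\ot\bar v+G_{\mathrm{loc}}$, $\tilde g=\chi\tilde f_0+g_{\mathrm{loc}}$, and $G_{\mathrm{loc}},g_{\mathrm{loc}}$ compactly supported in $\{1/3<|y|<2\}$ of size $\lec\e$. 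Using the Oseen representation $\bar v=\Th(\tilde G)+\La(\tilde g)$ from \eqref{Phi-def}--\eqref{Psi-def} and Lemma~\ref{th:Stokes}(i)--(ii) (with $\al>1$), the linear and compactly supported pieces contribute terms in $L^\I X_\al\cap L^\I X_1$ of size $\lec\e$. For the quadratic piece, if $\|\bar v\|_{L^\I X_\ga}\le M$ with $\ga\in(1/2,1]$, then $\bar v\ot\bar v\in L^\I X_{2\ga}$ and Lemma~\ref{th:Stokes}(i) gives $\|\Th(\bar v\ot\bar v)\|_{L^\I X_{\min(2\ga-1,1)}}\lec M^2$; hence $\|\bar v\|_{L^\I X_{\min(2\ga-1,\al,1)}}\lec\e+M^2$, and iterating $\ga\mapsto\min(2\ga-1,\al,1)$ reaches $1$ in finitely many steps starting from any $\ga_0>1/2$.

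\textbf{Main obstacle.} The hard part will be initializing the bootstrap, that is, producing the first pointwise decay $|\bar v(s,y)|\lec|y|^{-\ga_0}$ with $\ga_0>1/2$ from the scale-critical hypothesis $\|v\|_{L^\I L^{3,\I}}\ll 1$ alone. Two natural routes are (a) a Lorentz-space refinement of the Oseen convolution bound, using $\bar v\ot\bar v\in L^\I L^{3/2,\I}$ together with the pointwise bound \eqref{estimate-T} on $S$, or (b) a small-data local $L^p$-regularity theory ($p>3$) for Navier--Stokes applied to $v$ on a fixed parabolic cylinder, via a Serrin-type bootstrap based on the smallness in $L^{3,\I}$. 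Once any $\ga_0>1/2$ is in hand the iteration closes routinely, and combining with the hypothesis on $\Rcut$ for $R_1<|x|\le 4R_1$ completes the proof.
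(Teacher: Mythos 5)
Your proposal correctly identifies the cut-off-and-Oseen-representation strategy, but the ``main obstacle'' you flag is a genuine gap that your two suggested routes do not close, and the paper in fact circumvents it entirely by a different mechanism. The difficulty is not merely technical: $L^{3,\infty}$ smallness gives no pointwise information at all, so there is no ``first'' decay exponent $\gamma_0>1/2$ to start the bootstrap, and the rescaling does not help since the $L^{3,\infty}$ hypothesis is scale-invariant (you land back on the same critical problem at unit scale). Route (a) cannot produce a supercritical pointwise bound from a critical Lorentz bound, and route (b) gives at best local boundedness in a fixed parabolic cylinder, not an improved decay rate in $|y|$.

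The paper's proof never attempts to bootstrap the given $u$ to pointwise decay. Instead, after cutting off (with $\td u=\zeta u+\nabla\eta$ solving a Stokes system \eqref{S2:td-u-eq2} on $\R\times\R^3$ with sources $\zeta f_0+\nb\cdot F_3(\td u)+f_4$), it runs a contraction mapping $\Phi w = \Lambda(\zeta f_0+f_4)+\Theta(F_3(w))$ directly in the ball $\{\|w\|_{L^\infty X_1}\le C_1\e_1\}$, using Lemma~\ref{th:Stokes}; this \emph{constructs} a small solution $w\in L^\infty X_1$ independently of any a priori decay on $u$. The crucial and missing step in your proposal is then the identification $w=\td u$, which follows from the uniqueness of small solutions in $L^\infty(\R;L^{3,\infty}(\R^3))$ (the content of Theorem~\ref{th1}(i) applied on $\R^3$), because both $w$ and $\td u$ lie in that class and solve the same equation with the same small data. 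This ``construct-then-identify-by-uniqueness'' argument is what transfers the $X_1$ decay from the constructed $w$ to the given $u$; without it your scheme cannot be completed.

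Two smaller remarks: the rescaling step is not used in the paper's proof and buys nothing here; and for the quadratic term the iteration map $\gamma\mapsto\min(2\gamma-1,\alpha,1)$ you write down would indeed converge in finitely many steps from any $\gamma_0>1/2$, so that part is fine in principle --- the entire problem is that $\gamma_0>1/2$ is unavailable.
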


{\it Proof}.\quad We perform a cut-off and the extend the solution
for $|x|>R_1$ to entire $\R^3$. Fix a smooth function $\zeta(x)$
which is 1 for $|x|>R_1+0.9$ and $0$ for $|x|<R_1+0.1$. Let
\begin{equation}\label{S2:td-u-eq1}
\td u = \zeta u + \nb \eta, \quad \div \td u =0,\quad \td p= \zeta
p-\pd_t \eta,
\end{equation}
where, for each $t$, $\eta$ is defined by the Newtonian potential so
that $\div \td u =0$, i.e.,
\begin{equation}\label{S2:eta.def}
\eta(t,x) = \int_{\Rcut} \frac {u(t,y) \cdot \nb
\zeta(y)}{4\pi|x-y|}dy, \quad -\De \eta =  u \cdot \nb \zeta.
\end{equation}
Note that $\tilde{u}, \td p$ are defined for $x\in \R^3$ and satisfy
the Stokes system for $(t,x) \in \R^{1+3}$
\begin{equation}\label{S2:td-u-eq2}
\pd_t \td u - \De \td u  + \nb \td p=\zeta f_0+\nb \cdot F_3(\td
u)+f_4, \quad \div \td u =0,
\end{equation}
where\footnote{Eq.~\eqref{S2:td-u-eq3} seems complicated because it
is used for the construction of $\td u$ without assuming decay
estimates of $u$, and hence cannot allow $u$ in $F_3$.}
\begin{equation}\label{S2:td-u-eq3}
\begin{split}
F_3(\td u) &=  \zeta  F - (\td u - \nb \eta)\ot (\td u - \nb
\eta),
\\
 f_4 &= -(\nb \zeta)\cdot F+\nb( u \cdot \nb \zeta)
  + \nb\cdot [(\zeta^2-\zeta)u \ot u)]
\\
&\qquad -2(\nb \zeta\cdot\nb) u -u\De \zeta + (u \cdot \nb \zeta)
u+p\nb \zeta . \end{split}\end{equation}
Note $F_3$ contains $\td u$ and global source terms, while
$f_4(t,x)$ contains only local source terms,
\begin{equation}\label{S2:td-u-eq4}
\supp f_4 \subset \R\times \Rcut,\quad \norm{f_4}_{L^\I_{t,x}} \le C
\e_1.
\end{equation}
Consider now the fixed point problem $w = \Phi w $ for the map
$\Phi$ from the class of vector fields defined on $\R \times \R^3$
into itself, defined by
\begin{equation}
(\Phi w) = \La(\zeta f_0+f_4) + \Th(F_3(w)).
\end{equation}
Here $\La$ and $\Th$ are defined by \eqref{Phi-def} and
\eqref{Psi-def}, respectively. We want to show it is a contraction
mapping in the class of small vector fields in $L^\I X_1$. By Lemma
\ref{th:Stokes},
\EQ{ \norm{\Phi w}_{L^\I X_1} &\le C\norm{\zeta f_0 + f_4}_{L^\I
X_{3+\de}} +C\norm{F_3}_{L^\I X_2}
\\
& \le C \e_1 + C (\e_1 +\norm{w}_{L^\I X_1})\norm{w}_{L^\I X_1} \\
\norm{\Phi w-\Phi \td w}_{L^\I X_1} & \le C (\e_1 + \norm{w}_{L^\I
X_1} +\norm{\td w}_{L^\I X_1} ) \norm{w-\td w}_{L^\I X_1}. }
Thus there is a constant $C_1$ so that $\Phi$ is a contraction
mapping in the class
\begin{equation}
\{ w(t,x):\R \times \R^3 \to \R^3, \quad \norm{w}_{L^\I X_1} \le
C_1 \e_1 \}.
\end{equation}
Thus there is a unique solution of $w=\Phi w$ in $L^\I X_1$.

By uniqueness of small solutions in $L^\I(\R, L^{3,\I}(\R^3))$, we
have $w= \td u = \zeta u + \nb \eta$. Thus
\begin{equation}
|u(t,x)| \le |w(t,x)| + |\nb \eta(t,x)| \le C_1\e \frac 1{|x|},
\quad (|x|>R_1+1).
\end{equation}

 \myendproof

\section{Spatial asymptotics of time periodic solutions}
\label{S3}

In this section we prove Theorem \ref{th2}. We start with a
$T$-periodic solution $(u,p)$ of \eqref{NS1}--\eqref{NS2} with force
$F$ for $|x|>R$, satisfying the estimates assumed in Theorem
\ref{th2}. Proceed as in the proof of Lemma \ref{Spatial-decay} and
let $R_1=R$, we fix a cut-off function $\zeta(x)$ with $\nb \zeta$
supported in $\Rcut$ and define
\begin{equation}
\td u= \zeta u + \nb \eta, \quad  \td p=p \zeta-\pd_t \eta, \quad
-\De \eta = \nb \zeta \cdot u,
\end{equation}
as in \eqref{S2:td-u-eq1} and \eqref{S2:eta.def}. Then $\td u$ and
$\td p$ satisfies \eqref{S2:td-u-eq2}--\eqref{S2:td-u-eq4}. By
Lemma \ref{Spatial-decay} we have $|u(t,x)|\le C \e |x|^{-1}$ for
$|x|>R$.

Let $b$ be the constant vector defined by \eqref{th2-eq2},
\begin{equation}
b_j = \lim_{\rho \to \I} \frac 1T\int_0^T \int _{|x|=\rho}
T_{ij}(u,p,F) n_i dS_x\,dt,\quad (n_i = \frac{x_i}{|x|},\ i=1,2,3),
\end{equation}
where $T_{ij}(u,p,F)=p\de_{ij} + u_i u_j - \pd_i u_j - \pd_j u_i -
F_{ij}$. By the fast spatial decay of $\nb \eta$, the time
periodicity, and divergence theorem,
\begin{equation}
b_j = \lim_{\rho \to \I} \frac 1T\int_0^T \int _{|x|=\rho}
T_{ij}(\td u,\td p,F) n_i dS_x\,dt = \frac 1T\int_0^T \int _{|x|\le
R_1+1} (\zeta f_0 + f_4)_j\, dx\,dt.
\end{equation}

Also let $U^b$ be the corresponding Landau solution, and let
\begin{equation}
\td U= \zeta U^b + \hat U, \quad \td P=\zeta P^b,
\end{equation}
where $\hat U$ has compact support in $\Rcut$, $\div \hat U = -\td U
\cdot \nb \zeta$, and
\begin{equation}
\|\hat U \|_{W^{3,6}}\le C\norm{\td U.\nb \zeta }_{W^{2,6}(\Rcut)}
\le C |b| \le C \e_1.
\end{equation}
Note that $\td U, \td P$ satisfies
\begin{equation}
 - \De \td U + \nb.(\td U \ot \td U) + \nb \td P=f_{\td U}, \quad
\div \td U =0,
\end{equation}
for $x \in \R^{3}$,  where \EQ{ f_{\td U} =& -\De \hat U -2\nb
U^b\cdot\nb \zeta -U^b\De \zeta +P\nb \zeta+ (\zeta^2-\zeta)\nb.(U^b
\ot U^b)
\\
&+\nb\cdot (\td U \ot \td U-\zeta U^b \ot \zeta U^b  ) +
(U^b\cdot\nb \zeta^2)U^b. }
\begin{equation}\label{fQ-est}
\norm{f_{\td U}}_{L^\I} \le C \e_1, \quad \supp f_{\td U} \subset
\Rcut.
\end{equation}
Moreover, \EQ{ \int_{\R^3} (f_{\td U})_j dx &= \int_{|x|\le R_1+2}
\pd_i T_{ij}(\td U,\td P) dx= \int_{|x|= R_1+2} T_{ij}(\td U,\td
P)\frac {x_i}{|x|} dS_x  \\ &= \int_{|x|= R_1+2} T_{ij}(U^b,
P^b)\frac {x_i}{|x|} dS_x = b_j. }

Let
\begin{equation}
v=\tilde{u}-\td U, \quad \pi= \tilde{p}-\tilde{P}.
\end{equation}
They satisfy, for $(t,x) \in \R^{1+3}$,
\begin{equation}\label{v-eq1}
\pd_t v - \De v  + \nb \pi= g+ \nb G(v), \quad \div v=0,\quad
|v(t,x)| \le C \e_2 \bka{x}^{-1}
\end{equation}
where (note $G(v)=F_3(\td u) + \td U\ot \td U $)
\begin{equation}\label{v-eq2}
\begin{split} g&=\zeta f_0 + f_4 - f_{\td U} , \\
G(v)&= \zeta F - (v - \nb \eta) \ot (v+\td U-\nb \eta) - \td U \ot
(v-\nb \eta) .\end{split}
\end{equation}

Because $ \int_{\R^3} (f_{\td U})_i dx= b_i$,
\begin{equation}\label{v-eq3}
\frac{1}{T}\int_0^T \int_{\R^3} g(t,x) \, dx\,dt =0.
\end{equation}

Consider now the fixed point problem $v = \Phi v $ for the map
$\Phi$ from the class of vector fields defined on $\R \times \R^3$
into itself, defined by
\begin{equation}
\Phi v = \La g+ \Th G(v).
\end{equation}
Here $\La$ and $\Th$ are defined by \eqref{Phi-def} and
\eqref{Psi-def}, respectively. We want to show it is a contraction
mapping in the class of small vector fields in $L^\I X_\al$,
$1<\al<2$. By Lemma \ref{th:Stokes},
\EQ{ \norm{\Phi v}_{L^\I X_\al} &\le C\norm{g}_{L^\I X_{3+\de}}
+C\norm{G(v)}_{L^\I X_{\al+1}}
\\
& \le C \e_1 + C \norm{\zeta F }_{X_\al}+ C\norm{v-\nb \eta
}_{X_\al} (\norm{v-\nb \eta }_{X_\al} + \norm{\td U }_{X_1})
\\
& \le C \e_1 + C (\e_1 +\norm{v}_{L^\I X_\al})\norm{v}_{L^\I X_\al}
\\
\norm{\Phi v-\Phi \td v}_{L^\I X_\al} & \le C (\e_1 + \norm{v}_{L^\I
X_\al} +\norm{\td v}_{L^\I X_\al} ) \norm{v-\td v}_{L^\I X_\al}. }
Thus there is a constant $C_2$ so that $\Phi$ is a contraction
mapping in the class
\begin{equation}
\{ v(t,x):\R \times \R^3 \to \R^3, \quad \norm{v}_{L^\I X_\al} \le
C_2 \e_1 \}.
\end{equation}
Thus there is a unique solution of $v=\Phi v$ in $L^\I X_\al$, which
agrees with $\td u - \td U$ by uniqueness of small solutions in
$L^\I(\R, L^{3,\I}(\R^3))$.

Since $v= \zeta u + \nb \eta - \zeta U^b - \hat U$, for $|x|>R_1+1$,
we have
\begin{equation}
|u(t,x)+\nb \eta(t,x) - U^b(x)| \le C_2 \e \bka{x}^{-\al}.
\end{equation}
Since $|\nb \eta(t,x)| \le C \bka{x}^{-2}$, we have proven Theorem
\ref{th2}.  \myendproof


\section{Perturbed Navier-Stokes flows}
\label{S4}

This section prepares a few lemmas for the proof of Theorem \ref{th4}
in \S\ref{S5}.  We first consider the solvability of the perturbed
Navier-Stokes system \eqref{w-eq0} in Proposition \ref{th3} and Lemma
\ref{th:int-est2}. We then prove a few estimates in Lemmas
\ref{th:4-2}--\ref{th:4-3}.

We now recall a few notions related to self-similar solutions.
The Navier-Stokes equations~\eqref{NS1} in $I \times \Omega  =
(0,\infty) \times \R^3$ with zero force enjoy the scaling property that if $u(t,x)$ is a solution,
then so are $u_\la(t,x):= \la u(\la^2 t,\la x)$ for any
$\la>0$. If $u_\la = u$ for all $\la>1$, then the solution is
called {\it
forward self-similar} (SS), and $u(t,x) = t^{-1/2} u(1,
t^{-1/2}x)$. If
 $u_\la = u$  for a particular $\la>1$, then the solution is called
{\it forward discretely self-similar} (DSS), and it is completely
decided by its values when $T\le t < T\la^2$ for any $T>0$. The
existence of both type of solutions for small initial data $u_0$
with $u_0(x)=\la u_0(\la x)$ for all $\la>1$ or a particular
$\la>1$, (also called SS or DSS), follows from Giga and Miyakawa
\cite{GM}, see also Cannone, Meyer and Planchon \cite{CMP}
and Cannone and Planchon \cite{CP}.

We now state our result on the solvability of
the perturbed Navier-Stokes system which is useful
to describe the time-asymptotics of solutions
for the initial-boundary value problem \eqref{NS1}--\eqref{NS3}
in the next section.
In particular, it implies the existence of the
self-similar solution in Theorem \ref{th4}.
\begin{proposition}[Perturbed system]\label{th3}
For any  $0<\eta<1$  there is $\e_0=\e_0(\eta)>0$ such that the
following holds. Let $U,\td U,w_0$ be vector fields in $\R^3$
satisfying $\sup_x |x|(|U(x)|+|\td U(x)|+ |w_0(x)|)\le \e \le
\e_0$ and $\div w_0=0$. Then there is a unique solution $w(t,x)$
of the perturbed Navier-Stoke system
\begin{equation}\label{w-eq}
\pd_t w -\De w + \nb (w \ot w + U \ot w + w \ot \td U) + \nb p =0,
\quad \div w = 0
\end{equation}
in $\R^3$ with initial data $w(0)=w_0$, satisfying
\begin{equation} \label{w-est}
|w(t,x)| \lec \e(|x|+\sqrt t)^{-1+\eta}|x|^{-\eta}.
\end{equation}
The pressure satisfies $\norm{p(t)}_{L^{s,\I}(\R^3)} \lec \e^2
t^{-1+\frac 3{2s}}$ for any $\frac 32 \le s \le \frac 3{1+\eta}$.

If furthermore $\sup_x |x|^2 (|\nb U(x)|+|\nb \td U(x)|+ |\nb
w_0(x)|)\le \e$, then
\begin{equation} \label{Dw-est}
|\nb w(t,x)|
\lec \e(|x|+\sqrt t)^{-1+\eta}|x|^{-1-\eta}.
\end{equation}
\end{proposition}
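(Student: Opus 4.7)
\textbf{Proof proposal for Proposition~\ref{th3}.}
The plan is to recast \eqref{w-eq} as the Duhamel integral equation on $\R^3$,
\[
w = e^{t\De}w_0 + B(w,w) + B(U,w) + B(w,\td U),\qquad B(f,g)(t,x):=-\int_0^t e^{(t-s)\De}P\,\nb\cdot(f\otimes g)(s,x)\,ds,
\]
and solve it by contraction in the weighted pointwise space
\[
\mathcal{Y}:=\bket{w:\R_+\times\R^3\to\R^3 :\ \norm{w}_{\mathcal{Y}}:=\sup_{t>0,\, x\neq 0}(|x|+\sqrt t)^{1-\eta}|x|^{\eta}|w(t,x)|<\I}.
\]
This norm is invariant under the scaling $w\mapsto\la w(\la^2 t,\la x)$, and the target profile \eqref{w-est} is precisely the statement $\norm w_{\mathcal{Y}}\lec\e$; moreover the condition $|w_0|\lec\e|x|^{-1}$ is exactly what is needed for $e^{t\De}w_0\in\mathcal{Y}$.

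The proof then reduces to three pointwise convolution estimates. First, the linear estimate $\norm{e^{t\De}w_0}_{\mathcal{Y}}\lec\e$ follows from the bound
\[
\int_{\R^3}\frac{e^{-|x-y|^2/4t}}{t^{3/2}|y|}\,dy\lec (|x|+\sqrt t)^{-1+\eta}|x|^{-\eta},
\]
proved by splitting $\R^3$ into $\{|y|<|x|/2\}$, $\{|y-x|<|x|/2\}$, $\{|y|>2|x|\}$. Second, invoking the Oseen tensor estimate $|\nb_x S(t,x)|\lec(|x|+\sqrt t)^{-4}$ from \eqref{estimate-T} reduces the bilinear bound $\norm{B(f,g)}_{\mathcal{Y}}\lec\norm f_{\mathcal{Y}}\norm g_{\mathcal{Y}}$ to the scaling integral
\[
\int_0^t\!\!\int_{\R^3}\frac{dy\,ds}{(|x-y|+\sqrt{t-s})^4(|y|+\sqrt s)^{2-2\eta}|y|^{2\eta}}\lec (|x|+\sqrt t)^{-1+\eta}|x|^{-\eta},
\]
which is scale-invariant and may be normalized to $|x|+\sqrt t=1$; it is then estimated by splitting $s<t/2$ vs.\ $s>t/2$ with standard three-dimensional convolution estimates. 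Third, although $U$ and $\td U$ do not themselves lie in $\mathcal{Y}$, the hypothesis $|U(y)|\lec\e|y|^{-1}$ yields
\[
|B(U,w)(t,x)|\lec \e\,\norm w_{\mathcal{Y}}\int_0^t\!\!\int_{\R^3}\frac{dy\,ds}{(|x-y|+\sqrt{t-s})^4(|y|+\sqrt s)^{1-\eta}|y|^{1+\eta}},
\]
handled by the same rescaling with shifted exponents; this gives $\norm{B(U,w)}_{\mathcal{Y}}\lec\e\norm w_{\mathcal{Y}}$ and similarly $\norm{B(w,\td U)}_{\mathcal{Y}}\lec\e\norm w_{\mathcal{Y}}$. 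Combining the three estimates, $\Phi(w):=e^{t\De}w_0+B(w,w)+B(U,w)+B(w,\td U)$ satisfies
\[
\norm{\Phi(w)}_{\mathcal{Y}}\le C_0\e+C_0\e\norm w_{\mathcal{Y}}+C_0\norm w_{\mathcal{Y}}^2
\]
and a parallel Lipschitz bound, so the Banach fixed-point theorem produces a unique solution with $\norm w_{\mathcal{Y}}\le 2C_0\e$ provided $\e_0$ is small enough.

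The pressure is recovered from $-\De p=\pd_i\pd_j(w_iw_j+U_iw_j+w_i\td U_j)$ via Riesz transforms: the pointwise product bound $|w\ot w|+|U\ot w|+|w\ot\td U|\lec\e^2(|x|+\sqrt t)^{-2+2\eta}|x|^{-2\eta}$ lies in $L^{s,\I}(\R^3)$ of size $\e^2 t^{-1+3/(2s)}$ for $\frac32\le s\le\frac 3{1+\eta}$, the lower endpoint ensuring local integrability of $|y|^{-2\eta}$ near $0$ and the upper endpoint ensuring integrability at infinity; Calder\'on--Zygmund theory in the Lorentz scale then gives the stated $\norm{p(t)}_{L^{s,\I}}\lec\e^2 t^{-1+3/(2s)}$. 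The gradient bound \eqref{Dw-est} is obtained by running the same contraction in the augmented norm
\[
\norm w_{\mathcal{Y}_1}:=\norm w_{\mathcal{Y}}+\sup_{t>0,\,x\neq 0}(|x|+\sqrt t)^{1-\eta}|x|^{1+\eta}|\nb w(t,x)|,
\]
using the second-derivative bound $|D^2_x S(t,x)|\lec(|x|+\sqrt t)^{-5}$ from \eqref{estimate-T} and the additional hypotheses on $|\nb U|$, $|\nb\td U|$, $|\nb w_0|$, which give $\norm{e^{t\De}w_0}_{\mathcal{Y}_1}\lec\e$ by a gradient analogue of the first Gaussian estimate.

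The main technical obstacle is the bilinear scaling integral of Step 2: one must verify that the exponents match exactly on the two sides, and that the condition $0<\eta<1$ is precisely what guarantees local integrability of $|y|^{-2\eta}$ near the origin and integrability of $(|y|+\sqrt s)^{-2+2\eta}$ at infinity after convolution with the Oseen kernel. The extra weight $|x|^{-\eta}$ appearing in the norm (rather than the pure self-similar rate $(|x|+\sqrt t)^{-1}$) is indispensable here: without it the bilinear integral diverges logarithmically in the regime $|y|\sim|x|\gg\sqrt s$, which is the chief reason the estimate~\eqref{w-est} is formulated with the split exponents $1-\eta$ and $\eta$.
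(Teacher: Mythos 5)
Your argument coincides with the paper's proof of Proposition~\ref{th3}: the same Duhamel reformulation with the Oseen tensor, the same weighted norms (the paper's $\mathcal{Y}_1,\mathcal{Y}_2$), the same $s<t/2$ versus $s>t/2$ splitting in the key convolution estimate (which the paper makes precise via Lemma~\ref{th:int-est2} and \eqref{S4:wN-est3}), a contraction argument, and Calder\'on--Zygmund for the pressure. One small slip in your pressure step: the hypothesis $|U(x)|\lec\e|x|^{-1}$ yields only $|U\ot w|\lec\e^2(|x|+\sqrt t)^{-1+\eta}|x|^{-1-\eta}$, not the displayed $(|x|+\sqrt t)^{-2+2\eta}|x|^{-2\eta}$, since the former is larger near $|x|=0$ ($1+\eta>2\eta$); this weaker bound is the one the paper actually uses in \eqref{S4:F-est1}, and your Lorentz-norm conclusion $\norm{p(t)}_{L^{s,\I}}\lec\e^2 t^{-1+3/(2s)}$ for $\frac32\le s\le\frac3{1+\eta}$ still follows from it, so the conclusion is unaffected.
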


{\it Comments for Proposition \ref{th3}:}
\begin{enumerate}

\item $U$ and $\td U$ need not be divergence-free or self-similar.
If we assume in the following that both $U$ and $\td U$ are SS,
the solution set of \eqref{w-eq} has the same scaling symmetry
as the usual Navier-Stokes flows:
If $w(t,x)$ is a solution of \eqref{w-eq}, then so is
\begin{equation}\label{scaling-symmetry}
w_{\la}(t,x)=\la w(\la t, \ \la^2 x)
\end{equation}
for any $\la>0$. By this scaling symmetry and the uniqueness part
of Proposition \ref{th3}, $w$ is forward SS (or forward DSS) if
$w_0$ is SS (or DSS).

\item If $w$ is forward self-similar, then $w(t,x) = W(x/\sqrt t) /
\sqrt t$ with $W(x)=w(1,x)$. In the case $U=\td U=0$  and $w_0$ is
SS and small, we have $W\in L^{3,\I}$ and expect $|W(y)| \le \e
\bka{y}^{-1}$, i.e.,
\begin{equation} \label{S5eq:Q2}
|w(t,x)| \le \e (|x| + \sqrt t) ^{-1}.
\end{equation}

\item
For general initial data $|w_0(x)| < \e |x|^{-1}$ and $U=\td U=0$,
we can construct solution of \eqref{w-eq} satisfying the same
bound \eqref{S5eq:Q2}. However, this is impossible if $U$ or $\td
U$ is nonzero. See Remarks (i) and (ii) after the proof of
Proposition \ref{th3}.

\item In the case  $U$ and $\td U$ are nonzero, and all $U$, $\td U$ and $w_0$
are SS and small, $W(y)$ satisfies $\div W = 0$ and
\begin{equation}\label{WW-eq}
-\frac 12 W - \frac 12 x \cdot \nb W -\De W + \nb (W \ot W + U \ot
W + W \ot \td U) + \nb P =0.
\end{equation}
Since $W\in L^{3,\I}$, we expect $|W(y)|\lec \e|y|^{-1}$ for large
$y$. Due to the local singularity of $U$ and $\td U$, and the
local analysis of \cite{MT}, we can show $|W(y)|\lec \e
|y|^{-\eta}$ for small $y$ and some $\eta>0$, with a smaller $\e$
needed if we want a smaller $\eta$. Thus
\begin{equation}\label{WW-est}
 |W(y)| \le \e (|y|^\eta + |y|)^{-1},
\end{equation}
which corresponds to \eqref{w-est} for $w$.  For general data $w_0
\in X_1$, Proposition \ref{th3} asserts the unique existence of
solutions $w$ satisfying the bound \eqref{w-est}.

\end{enumerate}

In order to prove Proposition \ref{th3} we start with a lemma.

\begin{lemma} \label{th:int-est2}
Let $n \in \N$, $b \ge 0$, $c \ge 0$, $b+c<n$, $\mu>0$, $\la\ge
0$, and $t>0$. Then
\begin{equation}\label{th4.1-eq2}
\int_{\R^n} (|x-y|+\la)^{-b} |x-y|^{-c}(|y|+\sqrt t)^{-n-\mu}dy
\sim \sqrt t^{-\mu}(|x|+\la+\sqrt t)^{-b}(x+\sqrt t)^{-c} .
\end{equation}
\end{lemma}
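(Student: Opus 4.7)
The plan is to prove the upper bound ($\lec$) and lower bound ($\gec$) separately, using elementary geometric decomposition based on the relative sizes of $|y|$, $|x-y|$, $\la$, and $\sqrt{t}$. The underlying heuristic is that $(|y|+\sqrt{t})^{-n-\mu}$ acts like $\sqrt{t}^{-\mu}$ times a probability measure on the ball $|y|<\sqrt{t}$, so the integral is essentially $\sqrt{t}^{-\mu}$ times a spatial average of $(|x-y|+\la)^{-b}|x-y|^{-c}$, which should behave like $(|x|+\la+\sqrt{t})^{-b}(|x|+\sqrt{t})^{-c}$. The RHS in the statement should read $(|x|+\sqrt{t})^{-c}$ in the last factor (clear typo).

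For the lower bound I would split into two cases. If $|x|\le \sqrt{t}$, restrict the integration to the annulus $\sqrt{t}/8 < |x-y| < \sqrt{t}/4$, on which $|x-y|+\la \sim \la+\sqrt{t}$, $|x-y|\sim \sqrt{t}$, and $|y|+\sqrt{t}\sim \sqrt{t}$ (using $|y|\le |x|+|x-y|\le 2\sqrt{t}$); the volume is $\sim \sqrt{t}^n$, yielding $\gec (\la+\sqrt{t})^{-b}\sqrt{t}^{-c-\mu}$, which matches the RHS. If $|x|>\sqrt{t}$, restrict instead to $|y|<\sqrt{t}/2$, where $|x-y|\sim |x|$ and $|y|+\sqrt{t}\sim\sqrt{t}$, again giving the correct order.

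For the upper bound, decompose $\R^n = \Om_1 \cup \Om_2 \cup \Om_3$ where
\[
\Om_1 = \{|y|\le |x|/2\}, \quad \Om_2 = \{|x-y|\le |x|/2\},\quad \Om_3 = \R^n \setminus (\Om_1 \cup \Om_2).
\]
On $\Om_1$ we have $|x-y|\sim |x|$, so the integrand is bounded by $(|x|+\la)^{-b}|x|^{-c}(|y|+\sqrt{t})^{-n-\mu}$, which upon integration yields $(|x|+\la)^{-b}|x|^{-c}\sqrt{t}^{-\mu}$. On $\Om_2$, $|y|\sim |x|$, so $(|y|+\sqrt{t})^{-n-\mu} \sim (|x|+\sqrt{t})^{-n-\mu}$ and the remaining spatial integral reduces (via the change of variable $z=x-y$) to the elementary estimate
\[
\int_{|z|<|x|/2}(|z|+\la)^{-b}|z|^{-c}\,dz \sim (|x|+\la)^{-b}|x|^{n-c},
\]
valid because $b+c<n$. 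On $\Om_3$, $|x-y|\ge |x|/2$ and $|y|>|x|/2$; splitting further into $|x|/2<|y|<2|x|$ (where $|x-y|\sim |y|\sim |x|$) and $|y|>2|x|$ (where $|x-y|\sim |y|$) reduces the matter to the companion identity
\[
\int_{|z|>R}(|z|+\la)^{-b}|z|^{-c-n-\mu}\,dz \sim (R+\la)^{-b}R^{-c-\mu},\qquad \mu>0.
\]
Assembling the three contributions and comparing to the RHS in the two regimes $|x|\lessgtr \sqrt{t}$ reduces to the inequalities $|x|^{-\mu}\le \sqrt{t}^{-\mu}$ (for $|x|>\sqrt{t}$) and $|x|^{n-c}\sqrt{t}^{c-n}\le 1$ (for $|x|\le \sqrt{t}$), both obvious.

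The main obstacle is just keeping the bookkeeping straight: there are four length scales ($|x|,\la,\sqrt{t},|y|$) and the case analysis must handle every ordering uniformly. The two elementary identities above serve as the workhorse estimates throughout, and once they are in place each of the six or so subcases is a direct computation rather than a substantive difficulty.
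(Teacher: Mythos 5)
Your approach is genuinely different from the paper's.  The paper first normalizes $t=1$ by scaling, sets $\rho=|x|+\la$, proves the estimate directly in the regime $\rho<10$, and then for $\rho\ge 10$ splits the integral at $|y|=\rho/4$ and \emph{bootstraps}: $J_2$ is reduced to the $\rho<10$ case by a second rescaling $y\mapsto y/\rho$, and $J_1$ to the same argument with parameters $(\la,b,a)$ replaced by $(0,0,a-b)$.  You instead prove the two one-sided inequalities separately with a one-pass geometric decomposition at the scale $|x|$.  Your lower-bound argument is correct as stated, and the two ``companion'' convolution estimates are also correct.

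The upper bound, however, has a genuine gap in the regime $|x|\ll\sqrt t$.  On $\Om_1=\{|y|\le|x|/2\}$ you bound $\int_{\Om_1}(|y|+\sqrt t)^{-n-\mu}\,dy$ by $\int_{\R^n}(|y|+\sqrt t)^{-n-\mu}\,dy\sim\sqrt t^{\,-\mu}$, arriving at $(|x|+\la)^{-b}|x|^{-c}\sqrt t^{\,-\mu}$; but when $c>0$ and $|x|\to 0$ (with $\sqrt t$ and $\la$ fixed) this blows up, while the right-hand side $\sqrt t^{\,-\mu}(\la+\sqrt t)^{-b}\sqrt t^{\,-c}$ is bounded.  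You must retain the localization: $|\Om_1|\sim|x|^n$, hence $\int_{\Om_1}(|y|+\sqrt t)^{-n-\mu}\,dy\lec\min(|x|,\sqrt t)^n\,\sqrt t^{\,-n-\mu}$, which makes the $\Om_1$ contribution $(|x|+\la)^{-b}|x|^{n-c}\sqrt t^{\,-n-\mu}$ for $|x|\le\sqrt t$; that \emph{does} compare to the RHS (the required inequality being $(|x|/\sqrt t)^{\,n-b-c}\le 1$, not $(|x|/\sqrt t)^{\,n-c}\le 1$ as you wrote).  A similar problem occurs in the $|y|>2|x|$ piece of $\Om_3$: there you replace $(|y|+\sqrt t)^{-n-\mu}$ by $|y|^{-n-\mu}$ to invoke the companion identity, producing $(|x|+\la)^{-b}|x|^{-c-\mu}$, which again diverges as $|x|\to 0$.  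You need to split that region at $|y|=\sqrt t$: on $2|x|<|y|<\sqrt t$ use $(|y|+\sqrt t)\sim\sqrt t$ and the first companion identity, and on $|y|>\sqrt t$ use the second with $R=\sqrt t$.  Both sub-pieces then give $(\la+\sqrt t)^{-b}\sqrt t^{\,-c-\mu}$, which matches.  With these two repairs the argument closes, but as written the ``only two inequalities left to check'' claim underestimates the case analysis.
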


\begin{proof} We may assume $t=1$ since the general case follows
from the change of variables $y=\sqrt t \hat y$, $x=\sqrt t \hat
x$ and $\la=\sqrt t \hat \la$. Denote the integral with $t=1$ as
$J(x)$ and $\rho=|x|+\la$. We want to show $J(x)\sim
\bka{\rho}^{-b} \bka{x}^{-c}$. Let $a=\mu+b+c$. If $\rho <10$, we
have
\begin{equation} J(x) \lec \int_{|y|<20} |x-y|^{-b-c}dy + \int_{|y|>20}
|y|^{-n-a} dy \lec 1.
\end{equation}
We also have $J(x) \gec  \int_{|y|>20} |y|^{-n-a} dy \gec 1$.
Assume now $\rho\ge  10$. We have
\begin{equation}
J(x)  = \int_{|y|<\rho/4} + \int_{|y|>\rho/4} =:J_1 + J_2.
\end{equation}

For $J_2$, by replacing the factor $(|y|+1)$ of the integrand by
$|y|$ and rescaling $y \to y/\rho$,
\begin{equation}
J_2 \le \rho^{-a}\int_{|y|\ge 1/4} (|\hat x-y|+\hat \la)^{-b}|\hat
x-y|^{-c}|y|^{-n-a+b+c} dy,
\end{equation}
where $\hat x = x/\rho$, $\hat \la = \la/\rho$ and $|\hat x| +\hat
\la =1$. The integral is of order 1 by the previous case
$\rho<10$. Thus $J_2 \lec \rho^{-a}$.

For $J_1$, because $| x-y|+ \la \ge \rho/4$ when $|y| \le \rho/4$,
\begin{equation}
J_1 \lec \rho^{-b}\int_{\R^n} |x-y|^{-c}\bka{y}^{-n-(a-b)+c} dy .
\end{equation}
Repeating the previous argument with $\la =0$ and $(b,a)$ replaced
by $(0,a-b)$, we get $J_1 \lec \rho^{-b} \bka{x}^{-c}$. We also
have $J_1 \gec \rho^{-b}\int_{|y|<5/2} |x-y|^{-c} dy \gec
\rho^{-b} \bka{x}^{-c}$.

Since $\rho^{-a}\lec \rho^{-b} \bka{x}^{-c} $, we get $J(x) \sim
\rho^{-b} \bka{x}^{-c}$.
\end{proof}

\bigskip

{\bf Proof of Proposition \ref{th3}:}

Equation \eqref{w-eq} can be written in the integral form using
the heat kernel $\Ga$ and the Stokes tensor \eqref{Stokes-tensor},
\begin{equation}
w=w_L+w_N(w),\quad
\end{equation}
where
\begin{equation}
w_L=e^{t\Delta}w_0, \quad w_L(t,x) =\int \Ga(t,y) w_{0,i}(x-y)dy,
\end{equation}
\begin{equation}
w_N(w)(t,x)=-\int_0^t\int_{\R^3} \pd_k S_{ij}(s,y)
F_{kj}(x-y,t-s)dyds,
\end{equation}
and $F=F(w)=w\otimes w+U \otimes w+w\otimes \td U$. Since
$e^{-p^2} \le C_k \bka{p}^{-k}$ for any $k>0$, we can take $k=4$
and have
\begin{equation}\Ga(t,y) \lec t^{-3/2} \bka{x/\sqrt t}^{-4} = \sqrt{t}(|x|+\sqrt
t)^{-4}.
\end{equation}
Suppose $|w_0(x)|\le \e(|x|+\la)^{-1}$, $\la=0,1$. We have
\begin{equation}
|w_L(t,x)| \lec \int  \sqrt t(|y|+\sqrt t)^{-4}\e (|x-y|+\la)^{-1}
dy.
\end{equation}
By Lemma \ref{th:int-est2} with $(n,\mu,b,c)=(3,1,1,0)$,
\begin{equation}\label{wL-est}
|w_L(t,x)| \lec \e (|x|+\la + \sqrt t)^{-1}.
\end{equation}
Suppose further $|\nb w_0(x)|\le \e(|x|+\la)^{-2}$.  By Lemma
\ref{th:int-est2} again with $(n,\mu,b,c)=(3,1,2,0)$,
\begin{equation}
|\nb w_L(t,x)| \lec \int  \sqrt t(|y|+\sqrt t)^{-4}\e
(|x-y|+\la)^{-2} dy \lec \e (|x|+\la + \sqrt t)^{-2}.
\label{DwL-est}
\end{equation}

For $0<\eta<1$, define two norms for functions on $\R_+\times
\R^3$:
\begin{equation}
\norm{f}_{\mathcal{Y}_1}=\sup_{(t,x)\in\R_+\times\R^3}\bkt{
(\abs{x}+\sqrt{t})^{1-\eta}\abs{x}^{\eta}\abs{f(t,x)}},
\end{equation}
\begin{equation}
\norm{f}_{\mathcal{Y}_2}=\norm{f}_{\mathcal{Y}_1}+\sup_{(t,x)\in\R_+\times\R^3}\bkt{
(\abs{x}+\sqrt{t})^{1-\eta}\abs{x}^{1+\eta}\abs{\nabla f(t,x)}}.
\end{equation}
Estimates \eqref{wL-est} and \eqref{DwL-est} show
$\norm{w_L}_{\mathcal{Y}_1} \le C_1\e$ (resp.~$\norm{w_L}_{\mathcal{Y}_2} \le C_1\e$)
if $\norm{w_0}_{X_1}\le \e$ (resp.~$\norm{w_0}_{X_1}+\norm{\nb
w_0}_{X_2}\le \e$) for some $C_1$.

We now estimate the nonlinear term $w_N(w)$. We will show
\begin{equation} \label{S4:wN-est1}
\norm{w_N(w)-w_N(\td w)}_{\mathcal{Y}_1} \lec  \e \norm{w-\td w}_{\mathcal{Y}_1}
\end{equation}
if $\norm{w}_{\mathcal{Y}_1}\le \e$ and $\norm{\td w}_{\mathcal{Y}_1}\le \e$, and
\begin{equation}\label{S4:wN-est2}
\norm{w_N(w)}_{\mathcal{Y}_2} \lec  \e \norm{w}_{\mathcal{Y}_2}
\end{equation}
if $\norm{w}_{\mathcal{Y}_2}\le \e$. Note \eqref{S4:wN-est1} implies
$\norm{w_N(w)}_{\mathcal{Y}_1} \lec  \e \norm{w}_{\mathcal{Y}_1}$ by taking $\td w=0$.
These two estimates imply that the map
\begin{equation}
w \to w_L + w_N(w)
\end{equation}
is a contraction mapping in the class of vector fields defined on
$\R_+ \times \R^3$ with $\norm{w}_{\mathcal{Y}_1} \le 2C_1 \e$
(resp.~$\norm{w}_{\mathcal{Y}_2} \le 2C_1 \e$) if $\norm{w_0}_{X_1}\le \e$
(resp.~$\norm{w_0}_{X_1}+\norm{\nb w_0}_{X_2}\le \e$) and $\e$ is
sufficiently small.

If $\norm{w}_{\mathcal{Y}_1}\le \e$, $\norm{\td w}_{\mathcal{Y}_1}\le  \e$, and
$\norm{U}_{X_1}+\norm{\td U}_{X_1}\le \e$, then
\begin{equation}\label{S4:F-est1}
|F(w)-F(\td w)|(t,x)\lec \e\norm{w-\td w}_{\mathcal{Y}_1}
(\abs{x}+\sqrt{t})^{-1+\eta}\abs{x}^{-1-\eta}.
\end{equation}
If $\norm{w}_{\mathcal{Y}_2}\le \e$ and $\norm{U}_{X_1}+\norm{\td
U}_{X_1}+\norm{\nb U}_{X_2}+\norm{\nb \td U}_{X_2}\le \e$, then
\begin{equation}
|\nb F(w)|(t,x)\lec \e^2
(\abs{x}+\sqrt{t})^{-1+\eta}\abs{x}^{-2-\eta}.
\end{equation}
Thus, using the definition of $w_N$ and $\nabla S$-estimate
\eqref{estimate-T}, to prove \eqref{S4:wN-est1} and
\eqref{S4:wN-est2} it suffices to show
\begin{equation}\label{S4:wN-est3}
\int_0^t \int_{\R^3} (|y|+\sqrt
s)^{-4}(\abs{x-y}+\sqrt{t-s})^{-1+\eta}\abs{x-y}^{-k-\eta}dy ds
\lec (\abs{x}+\sqrt{t})^{-1+\eta}\abs{x}^{1-k-\eta}
\end{equation}
for $k=1,2$. Decompose the integral as
\begin{equation}
I+II:=\int_0^{\frac{t}{2}}\int_{\R^3}\cdots dyds+
\int_{\frac{t}{2}}^t\int_{\R^3}\cdots dyds.
\end{equation}
We first estimate $I$. Since $(\abs{x-y}+\sqrt{t-s})\gec t$,
\begin{equation}
I \lec \sqrt{t}^{-1+\eta} \int_0^{\frac{t}{2}}\int_{\R^3}
(\abs{y}+\sqrt{s})^{-4}\abs{x-y}^{-k-\eta}dyds.
\end{equation}
By Lemma \ref{th:int-est2} with $(n,\mu,b,c)=(3,1,0,k+\eta)$ and
$t$ replaced by $s$,
\begin{equation}
I\lec \sqrt{t}^{-1+\eta} \int_0^{\frac{t}{2}}
\sqrt{s}^{-1}\bke{|x|+\sqrt s}^{-k-\eta}ds =
\frac{t^{\frac{-1+\eta}{2}}}{\abs{x}^{k+\eta-1}}\int_0^{\frac{t}{2x^2}}
\frac{d\tau}{\sqrt{\tau}(1+\sqrt{\tau})^{k+\eta}}.
\end{equation}
where we used the scaling $s=\abs{x}^2 \tau$. Using $\int_0^{T}
\frac{d\tau}{\sqrt{\tau}(1+\sqrt{\tau})^{k+\eta}}\lec \frac {
\sqrt T}{\sqrt T+1}$ for $k+\eta>1$, (thus we need $\eta>0$ for
$k=1$), we get
\begin{equation}
I\lec
\frac{t^{\frac{\eta}{2}}}{\abs{x}^{k+\eta-1}}\frac{1}{(\abs{x}+\sqrt{t})}
\end{equation}
which is bounded by
$(\abs{x}+\sqrt{t})^{-1+\eta}\abs{x}^{1-k-\eta}$.

Next we estimate $II$. Bounding the factor $(|y|+\sqrt s)^{-4}$ by
$(|y|+\sqrt t)^{-4}$ and then integrating in time,
\begin{equation}
II\lec  \int_{\R^3}(\abs{y}+\sqrt{t})^{-4} [
(\abs{x-y}+\sqrt{t/2})^{1+\eta} -\abs{x-y})^{1+\eta}]
\abs{x-y}^{-k-\eta}dy.
\end{equation}
Using $(A+B)^{1+\eta}-A^{1+\eta} \sim (A+B)^{\eta}B \sim
A^{\eta}B+B^{1+\eta}$ for $A,B,\eta\ge0$,
\begin{equation}
II \lec  \int_{\R^3}(\abs{y}+\sqrt{t})^{-4} [\abs{x-y})^{-k}\sqrt
t +\abs{x-y}^{-k-\eta}\sqrt{t}^{1+\eta}]dy.
\end{equation}
By Lemma \ref{th:int-est2} with $(n,a,b)=(3,1,0)$ and $c=k$ or
$c=k+\eta$,
\begin{equation}
II \lec (|x|+\sqrt t)^{-k},
\end{equation}
which is also bounded by
$(\abs{x}+\sqrt{t})^{-1+\eta}\abs{x}^{1-k-\eta}$. Summing up, we
have shown \eqref{S4:wN-est3} and thus the existence of $w$
satisfying \eqref{w-est} and \eqref{Dw-est}.

It remains to show the estimate of the pressure, which follows
from its equation
\begin{equation}
-\Delta
p=\partial_j\partial_{i}(w_iw_j+U_iw_j+w_iU_j)\qquad\mbox{in
}\,\,\R^3,
\end{equation}
the Calderon-Zygmund estimates, and $F(w)$-estimate in
\eqref{S4:F-est1} with $\td w=0$.  \myendproof

\bigskip

{\it Remarks.}\quad (i) If $U=\td U=0$ and $|w_0(x)|\le \e
(|x|+\la)^{-1}$ with $\la=0$ or $\la=1$, then one can construct
solutions in the class \begin{equation}|w(t,x)|\lec \e
(|x|+\la+\sqrt t)^{-1}.
\end{equation}
Indeed, in this case, we have $|F(t,x)|\lec \e^2 (|x|+\la+\sqrt
t)^{-2}$ and can bound $w_N(w)(t,x)$ by $C \e^2 (|x|+\la+\sqrt
t)^{-1}$ using
\begin{equation}
\int_0^t \int (|y|+\sqrt s)^{-4} (|x-y|+\la+\sqrt {t-s})^{-2} dy
ds \lec \sqrt t (|x|+\la+\sqrt t)^{-2}
.
\end{equation}
(ii) If $U$ or $\td U$ is nonzero, we need to take $\eta>0$ in
\eqref{w-est}. If $\eta=0$, we have $|F(t,x)|\lec \e^2 (|x|+\sqrt
t)^{-1}|x|^{-1}$ and need
\begin{equation}
I =\int_0^t \int_{\R^3} (|y|+\sqrt s)^{-4}
(|x-y|+\sqrt{t-s})^{-1}|x-y|^{-1}dyds \lec  (x+\sqrt t)^{-1}.
\end{equation}
However, when $|x| \ll \sqrt t$,
\begin{equation}
I \gec \int_0^{t/2}\int_{|y|<\sqrt t}(|y|+\sqrt s)^{-4}
(\sqrt{t})^{-1}|x-y|^{-1}dyds.
\end{equation}
Integrating in $ds$ first,
\begin{equation}
I \gec \int_{|y|<\sqrt t}|y|^{-2} (\sqrt{t})^{-1}|x-y|^{-1}dy .
\end{equation}
Restricted in the subregion $2|x|<|y|<\sqrt t$, 
\begin{equation} I
\gec \int_{2|x|<|y|<\sqrt t}|y|^{-3} (\sqrt{t})^{-1}dy\sim
(\sqrt{t})^{-1} \log \frac {\sqrt t}{2|x|}.
\end{equation}
It is larger than $(x+\sqrt t)^{-1}$ by a factor $\log \frac
{\sqrt t}{2|x|}$ when $|x| \ll \sqrt t$.

(iii) Since $\nb^2 F \not \in L^1_{x,loc}$ and $\nb^2 S \not \in
L^1_{x,t,loc}$, there is no suitable integral formula for $\nb^2
w$, and the above method does not allow us to estimate $\nb^2 w$
pointwise.

(iv) If all $U$, $\td U$ and $w_0$ are self-similar, then
$w(t,x)=W(x/\sqrt t)/\sqrt t$ with $W(y)$ satisfying the elliptic
equation \eqref{WW-eq}, and one can estimate higher derivatives of
$W$. It is not clear if $w_0$ is DSS.


\bigskip 

In the rest of this section, we give two lemmas to be used
in the next section.

\begin{lemma}\label{th:4-2}
Let $\Rcut$ and cut-off function $\zeta$ be as in section
\ref{S2b}. There is a linear map
\begin{equation}\label{La-domain-range}
\La: \bket{ w \in L^{1}_{loc}(\R^3;\R^3), \ \div w=0}  \to
\bket{\hat w \in L^1_{loc}(\R^3;\R^3), \ \supp \hat w\subset
\Rcut}
\end{equation}
such that, for any $1<q<\I$ and $q \le r\le \I$, $\hat w = \La w$
satisfies $ \div \hat w=-\nb \zeta \cdot w$ and
\begin{equation}
\norm{\hat w} _{W^{1,q,r}(\Rcut)} + \norm{\nb P\hat
w}_{L^{q,r}(\Om)}+ \norm{ P \hat w}_{L^{3/2,\I}(\Om) \cap
L^{q_b,\I}(\Om) } \le C_q \norm{w}_{L^{q,r}(\Rcut)}
\end{equation}
where $P$ is the Helmholtz projection on $L^{q,r}(\Om;\R^3)$, $q_b
= q^*$ if $q<3$, and $q_b=100$ if $q\ge 3$.
\end{lemma}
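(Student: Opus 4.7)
The plan is to build $\hat w$ as the Bogovski\u{\i} solution of $\div \hat w = -\nb\zeta \cdot w$ on the bounded smooth annulus $\Rcut$, extend by zero to $\Om$, and then read off the Helmholtz projection from the associated exterior Neumann problem for the ``pressure'' part $p$. Set $f := -\nb\zeta \cdot w$, so $\supp f \Subset \Rcut$. The compatibility $\int_{\Rcut} f\,dx = 0$ follows from $\div w = 0$: for any $\chi_R \in C^\infty_c(\R^3)$ with $\chi_R\equiv 1$ on a ball containing $\supp\nb\zeta$ and $\zeta\equiv 1$ on $\supp\nb\chi_R$, one has
\[
 \int \nb\zeta \cdot w\,dx = \int \nb(\chi_R\zeta)\cdot w\,dx - \int \zeta\,\nb\chi_R \cdot w\,dx = 0,
\]
by pairing the compactly supported test functions $\chi_R\zeta$ and $\chi_R$ against the distribution $\div w=0$. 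Then the classical Bogovski\u{\i} operator on the annulus $\Rcut$ (decomposable as a finite union of star-shaped subdomains; cf.\ Galdi, Vol.~I, Ch.~III) is bounded $L^q_0\to W^{1,q}_0$ for $1<q<\I$, and by real interpolation bounded $L^{q,r}_0\to W^{1,q,r}_0$ on the Lorentz scale. Setting $\hat w := \La w := \mathcal{B} f$ gives $\supp\hat w\subset \Rcut$, $\div\hat w = -\nb\zeta\cdot w$, and the first estimate $\norm{\hat w}_{W^{1,q,r}(\Rcut)} \lec \norm{w}_{L^{q,r}(\Rcut)}$.

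Next, decompose $\hat w = P\hat w + \nb p$, where $p$ solves the exterior Neumann problem
\[
 \De p = f \ \text{ in } \Om,\quad \pd_N p = 0 \ \text{ on } \pd\Om,\quad p(x)\to 0 \ \text{ as } |x|\to\I;
\]
the boundary condition comes from $\hat w\cdot N = P\hat w\cdot N = 0$ on $\pd\Om$. Standard $L^{q,r}$-theory for this problem---extend $\hat w$ by zero to $\R^3$, write $p$ as the Newtonian potential of $f$ plus a harmonic corrector on $\Om$ enforcing $\pd_N p = 0$ on $\pd\Om$, then invoke Calder\'on--Zygmund estimates---gives $\norm{\nb^2 p}_{L^{q,r}(\Om)} \lec \norm{f}_{L^{q,r}} \lec \norm{w}_{L^{q,r}(\Rcut)}$. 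Combined with $\norm{\nb\hat w}_{L^{q,r}}$, this yields $\norm{\nb P\hat w}_{L^{q,r}(\Om)}\lec \norm{w}_{L^{q,r}(\Rcut)}$.

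For the bound on $\norm{P\hat w}_{L^{3/2,\I}\cap L^{q_b,\I}(\Om)}$, exploit the decay of $\nb p$ at spatial infinity. Outside $\Rcut$ we have $\hat w = 0$ and $p$ is harmonic (since $\supp f \Subset \Rcut$), so $P\hat w = -\nb p$ there. The Newtonian-potential part of $p$ is $-\frac{1}{4\pi}\int |x-y|^{-1}f(y)dy$; Taylor expanding in $y/|x|$ and using $\int f = 0$ kills the monopole term and yields $|p(x)|\lec \norm{w}_{L^{q,r}(\Rcut)}|x|^{-2}$ at infinity. The harmonic corrector decays at least as fast, since its Neumann data has total flux equal to $-\int_{\pd\Om}\pd_N \tilde p\,dS = -\int_D \De \tilde p\,dx = 0$ (here $D=\R^3\setminus\bar\Om$, on which $f\equiv 0$), so its monopole also vanishes. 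Interior gradient estimates for harmonic functions then give $|\nb p(x)|\lec \norm{w}_{L^{q,r}(\Rcut)}|x|^{-3}$ for $|x|>R_1+1$; since $0\notin\bar\Om$, this tail lies in $L^{p,\I}(\Om)$ for every $p\ge 1$. On $\Rcut$ itself, Sobolev embedding $W^{1,q,r}(\Rcut)\hookrightarrow L^{q^*,r}$ (for $q<3$), or $\hookrightarrow L^p$ for any finite $p$ (for $q\ge 3$), handles both $\hat w$ and, via the $W^{1,q,r}$-bound on $\nb p$, the gradient part; together these give the full estimate.

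\textbf{Main obstacle.} The crux is establishing the pointwise $|x|^{-3}$ decay of $\nb p$ on the unbounded exterior domain. Both the compact support of $f$ and its vanishing mean (for which $\div w = 0$ is essential) are needed: without the zero-mean condition the Newtonian potential would merely decay as $|x|^{-1}$ and $\nb p$ as $|x|^{-2}$, which fails to deliver $L^{3/2,\I}$ summability at infinity. The analogous vanishing-flux computation for the harmonic corrector ensures it contributes at the same order.
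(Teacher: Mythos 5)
Your proposal is correct and follows the same overall blueprint as the paper's proof: build $\hat w$ via the Bogovski\u{\i} operator on $\Rcut$ (interpolated to the Lorentz scale), then write $\hat w = P\hat w + \nb p$ where $p$ solves the exterior Neumann problem $\De p = -\nb\zeta\cdot w$ in $\Om$, $\pd_N p = 0$ on $\pd\Om$, and derive the $W^{1,q,r}$ and far-field bounds for $\nb p$. The genuine difference is in how the far-field decay of $\nb p$ is obtained. The paper cuts off, sets $u=p\,\zeta$, extends $u$ by zero across the obstacle, and notes that $\De u =: f$ is compactly supported in $L^{q,r}$; it then reads off $|\nb u|\lec |x|^{-2}$ from the Newtonian potential of $f$ \emph{without} any mean-zero information, and in fact explicitly remarks after the proof that it cannot verify $\int f = 0$ for that $f$. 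You instead work on $p$ itself, splitting it into the Newtonian potential of $-\nb\zeta\cdot w$ plus a harmonic corrector, and you exploit two cancellations: $\int\nb\zeta\cdot w\,dx = 0$ (which you correctly justify from $\div w = 0$ via a second cut-off, and which is in any case the compatibility condition needed for Bogovski\u{\i} and left tacit in the paper), and the zero total flux $\int_{\pd\Om}\pd_N N_f\,dS = 0$ because $N_f$ is harmonic in the bounded complement of $\Om$. These kill the monopoles and give the sharper $|\nb p|\lec |x|^{-3}$. Either rate suffices, since $|x|^{-2}$ is already in $L^{3/2,\I}$ near infinity and in $L^{q_b,\I}$ for all relevant $q_b$. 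Your route buys a cleaner statement with no caveat about unknown means, at the cost of a slightly longer cancellation argument; the paper's cut-off trick is quicker precisely because it does not need $\int f=0$ at all. (A small remark on the interior elliptic estimate: the paper invokes ADN-type Neumann boundary estimates with a partition of unity, while you go through the Newtonian-potential-plus-corrector representation with Calder\'on--Zygmund; these are interchangeable here.)
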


\begin{proof}
The usual construction of the solution of the problem $\div u = f$
(see \cite[Thm.~III.3.1]{Galdi}) gives such a linear map with
$\norm{\hat w}_{W^{1,q}(\Rcut)} \le C_q \norm{w}_{L^{q}(\Rcut)}$.
The bounds in Lorentz spaces follow from interpolation.
It remains to show $\norm{P\hat w}_{W^{1,q,r}(\Om)}\le C_q
\norm{w}_{L^{q,r}(\Rcut)}$. Decompose  $\hat{w}=P\hat w+\nabla\pi$
where the scalar function $\pi$ solves the following Neumann
boundary value elliptic problem:
\begin{equation}
\Delta \pi=-\nb \zeta \cdot w\quad \mbox{ in }\,\,\Omega,\qquad
\nabla\pi\cdot N=0 \quad \mbox{ on }\,\,\partial\Omega.
\end{equation}
Due to $L^p$ theory for elliptic equations with Neumann boundary
condition, (by partition of unity and boundary estimates in
\cite{ADN}, also see \cite{Smith}), we have
\begin{equation}\label{S4:pi-est}
\norm{\nabla^2\pi}_{L^{q,r}(\Omega)}\lesssim \norm{\nb \zeta \cdot
w}_{L^{q,r}(\Omega)}\lesssim \norm{w}_{L^{q,r}(\Rcut)}.
\end{equation}
Combining estimates,
\begin{equation}
\norm{\nabla P\hat w}_{L^{q,r}(\Om)}\lesssim \norm{\hat
w}_{W^{1,q,r}(\Omega)}
+\norm{\nabla^2\pi}_{L^{q,r}(\Omega)}\lesssim
\norm{w}_{L^{q,r}(\Rcut)}.
\end{equation}

To bound $\norm{ P \hat w}_{L^{3/2,\I}(\Om) \cap L^{q_b,\I}(\Om)
}$ amounts to bounding $\nb \pi$ is in the same space. This can be
shown by cut-off:  $u = \pi \zeta$ is in $W^{2,q,r}_{loc}$,
$C^2_{loc}$ outside of $\Rcut$, and solves $\Delta u = f$ for some
$f\in L^{q,r}$ with compact support. Thus $|\nb u(x)| \le C
|x|^{-2}$ for $|x|$ large using Newtonian potential. This
completes the proof.
\end{proof}

Note: Although $ f = \De u$ is a divergence, we do not know if
$\int f =0$ (which would imply extra decay for $u$) since $\De u$
is not integrable and we cannot use divergence theorem.

\begin{lemma}\label{th:4-3}

Let $w$ be the solution in Proposition \ref{th3} with $w_0$ and
$\nb w_0$ satisfying the stated estimates. Let $\hat w=\La w$ be
defined by Lemma \ref{th:4-2}. Then, for all $3 \le q \le q_1$,
\begin{equation}\label{hatw-est}
\norm{\hat w(t)}_{W^{1,q,\I}} + \norm{\int_0^t e^{-(t-s)A}P\pd_s
\hat w(s)ds }_{L^{q,\I}}\le C \e \bka{t}^{-\si}
\end{equation}
where $\si = \frac {1+\de}2-\frac 3{2q}>0$.
\end{lemma}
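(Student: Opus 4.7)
The plan is to handle the two summands separately. For the first, apply Lemma~\ref{th:4-2} with $r=\infty$ to get $\|\hat w(t)\|_{W^{1,q,\infty}}\lesssim\|w(t)\|_{L^{q,\infty}(\Rcut)}$; since $\Rcut$ is a bounded annulus, $L^\infty(\Rcut)\hookrightarrow L^{q,\infty}(\Rcut)$, and the pointwise bound \eqref{w-est} of Proposition~\ref{th3} gives $\|w(t)\|_{L^\infty(\Rcut)}\lesssim\e(R_1+\sqrt t)^{-1+\eta}\lesssim\e\langle t\rangle^{-(1-\eta)/2}$. The strict inequality $q_1<3/(\delta+\eta)$ translates into $\sigma<(1-\eta)/2$ for every $q\in[3,q_1]$, so $\langle t\rangle^{-(1-\eta)/2}\le\langle t\rangle^{-\sigma}$, as required.

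For the Duhamel integral, I would integrate by parts in $s$, using $\partial_s e^{-(t-s)A}=Ae^{-(t-s)A}$, to obtain
\begin{equation*}
\int_0^t e^{-(t-s)A}P\partial_s\hat w(s)\,ds=P\hat w(t)-e^{-tA}P\hat w(0)-\int_0^t A\,e^{-(t-s)A}P\hat w(s)\,ds.
\end{equation*}
The two boundary terms are controlled by Lemma~\ref{th:4-2}, which places $P\hat w$ in $L^{3/2,\infty}\cap L^{q_b,\infty}$, together with the semigroup decay \eqref{eq:decay1}. For the remaining integral, I would use the factorization $Ae^{-\tau A}=(A^{1/2}e^{-\tau A/2})(A^{1/2}e^{-\tau A/2})$ and two applications of \eqref{eq:decay2}, combined with $\|P\hat w(s)\|_{L^{p,\infty}}\lesssim\|w(s)\|_{L^{q,\infty}(\Rcut)}\lesssim\e\langle s\rangle^{-(1-\eta)/2}$ from Lemma~\ref{th:4-2} and Proposition~\ref{th3}, to obtain
\begin{equation*}
\|Ae^{-(t-s)A}P\hat w(s)\|_{L^{q,\infty}}\lesssim(t-s)^{-1-\sigma_1}\,\e\langle s\rangle^{-(1-\eta)/2},\qquad \sigma_1=\tfrac{3}{2}(1/p-1/q),
\end{equation*}
for any intermediate exponent $p\in[3/2,q_b]$. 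Splitting the $s$-integral at $t/2$, I would take $p=3/2$ on $[0,t/2]$ to obtain a contribution $\lesssim\e t^{3/(2q)-(3-\eta)/2}\le\e\langle t\rangle^{-\sigma}$ (using $\delta+\eta<2$), and take $p$ slightly larger than $q$ on $[t/2,t]$ so that $-\sigma_1=\mu>0$ renders $(t-s)^{-1+\mu}$ integrable near $s=t$, producing a contribution $\lesssim\e\langle t\rangle^{\mu-(1-\eta)/2}$. For $t\le 1$, applying the same $p>q$ estimate on all of $[0,t]$ bounds everything by $\e t^\mu\lesssim\e$, which is of order $\langle t\rangle^{-\sigma}$ in that range.

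The delicate point is the $[t/2,t]$ piece, where the natural choice $p=q$ produces the critical non-integrable kernel $(t-s)^{-1}$. The rescue is the strict inequality $q_1<3/(\delta+\eta)$: it makes $(1-\eta)/2-\sigma=3/(2q)-(\delta+\eta)/2$ strictly positive and bounded below by $3/(2q_1)-(\delta+\eta)/2>0$ uniformly in $q\in[3,q_1]$, so a single fixed $\mu>0$ (independent of $q$, and small enough that the corresponding $p$ stays below $q_b$) can be chosen for which $\mu-(1-\eta)/2\le-\sigma$, closing the estimate.
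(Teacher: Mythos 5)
Your treatment of the first term, the integration by parts, and the two boundary terms $P\hat w(t)$ and $e^{-tA}P\hat w(0)$ is sound and matches the paper's proof. The gap is in your handling of the remaining Duhamel integral.

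You propose to bound $\int_{t/2}^t A e^{-(t-s)A}P\hat w(s)\,ds$ in $L^{q,\infty}$ by choosing ``$p$ slightly larger than $q$'' so that $\sigma_1=\tfrac32(1/p-1/q)$ becomes negative and $(t-s)^{-1-\sigma_1}$ becomes integrable near $s=t$. This step is not available: the semigroup estimates \eqref{eq:decay1}--\eqref{eq:decay2} are stated, and are valid on an exterior domain, only for $1<p\le q<\infty$. Taking $p>q$ would require a map $L^{p,\infty}\to L^{q,\infty}$ for the Stokes semigroup with $p>q$, which fails on a domain of infinite measure (already $e^{t\Delta}$ on $\R^3$ does not map $L^p$ to $L^q$ for $p>q$). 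With the only admissible choice $p=q$ you get the borderline kernel $(t-s)^{-1}$, which is exactly the logarithmic divergence you identify; there is no choice of intermediate exponent on $[t/2,t]$ that removes it within the framework you set up.

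The paper closes this borderline case differently. It writes the interior term as $\mathcal K(t)=\int_0^t A^{1/2}e^{-(t-s)A}A^{1/2}P\hat w(s)\,ds$ and estimates it by duality: testing against $\varphi\in L^{q',1}_\sigma$, it shifts one factor $A^{1/2}e^{-(t-s)A}$ onto $\varphi$ and keeps $A^{1/2}P\hat w(s)$ (controlled via Lemma \ref{th:4-2} and $\|A^{1/2}u\|_{L^{p,r}}\lesssim\|\nabla u\|_{L^{p,r}}$) on the other side. The borderline $(t-s)^{-1}$ is then absorbed by the Yamazaki-type integral estimate \eqref{th:4-1} (built on \eqref{eq2.12}), which states that $\int_0^t s^{-\sigma}\|A^{1/2}e^{-(t-s)A}\varphi\|_{L^{p^*,1}}\,ds\lesssim t^{-\sigma}\|\varphi\|_{L^{p,1}}$ for $1<p<3$. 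This $L^{p,1}$-based real-interpolation gain is precisely the device your argument is missing, and without it (or an equivalent substitute) the $[t/2,t]$ piece does not close.
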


\begin{proof} Denote  $\mu=\frac{1-\eta}{2}$. Note $\si \le \mu$
if $q \le q_1$. By Lemma \ref{th:4-2} and Proposition \ref{th3},
\begin{equation} \norm{\hat
w(t)}_{W^{1,q,\I}} \lec \norm{ w(t)}_{L^{q,\I}(\Rcut)} \lec \norm{
w(t)}_{L^{\I}(\Rcut)} \lec \e \bka{t}^{-\mu}. \end{equation} For
the integral, using integration by parts, we have
\begin{equation}
\int_0^t e^{-(t-s)A}P\pd_s \hat w(s)ds =P\hat{w}(t)-e^{-tA}P \hat
w(0)-{\mathcal K}(t)
\end{equation}
where
\begin{equation}
{\mathcal K}(t)=\int_0^t
A^{\frac{1}{2}}e^{-(t-s)A}A^{\frac{1}{2}}P\hat w(s)ds.
\end{equation}
Choose $p$ so that $\mu=\frac 32(\frac 1{p^*}-\frac 1q)$, with
$\frac 1{p^*}=\frac 1{p}-\frac 13$. (We need $q>\frac 3{2(1-\mu)}
=\frac 3{1+\eta}$.) By Lemma \ref{th:4-2} and Proposition
\ref{th3},
\begin{equation}
\norm{P\hat{w}(t)}_{L^{q,\I}} \lec \norm{\nb
P\hat{w}(t)}_{L^{\frac {3q}{q+3},\I}} \le \e \bka{t}^{-\mu}.
\end{equation}
\begin{equation}
\norm{e^{-tA}P \hat w(0)}_{L^{q,\I}} \lec t^{-\si} \norm{P \hat
w(0)}_{L^{\qz ,\I}} \lec \e t^{-\si},
\end{equation}
while for $t<1$, $\norm{e^{-tA}P \hat w(0)}_{L^{q,\I}} \lec
\norm{P \hat w(0)}_{L^{q}} \lec \e$.

To estimate ${\mathcal K}(t)$, let $Z=\{ \varphi\in
L^{q',1}_{\sigma}: \norm{\varphi}_{L^{q',1}_{\sigma}}\leq 1\}$,
with $1/q+1/q'=1$ and $r=(q')^{*}$ i.e. $1/r=1/q'-1/3$. We have
\begin{equation}
t^{\mu}\norm{{\mathcal K(t)}}_{L^{q,\infty}}=\sup_{\varphi\in Z}\,
(t^{\mu}{\mathcal K}(t), \varphi)=\sup_{\varphi}\,
(t^{\mu}\int_0^t (s^{-\mu} A^{\frac{1}{2}} e^{-(t-s)A}\varphi,
s^{\mu} A^{\frac{1}{2}} P\hat{w}(s))ds
\end{equation}
\begin{equation}
\lesssim\sup_{\varphi}\, t^{\mu}\int_0^t s^{-\mu}
\norm{A^{\frac{1}{2}} e^{-(t-s)A} \varphi }_{L^{r,1}}
ds\,\sup_{s<t} s^{\mu}\norm{ A^{\frac{1}{2}}
P\hat{w}(s)}_{L^{r',\infty}}
\end{equation}
\begin{equation}
\lesssim \e\sup_{\varphi}\, t^{\mu}\int_0^t s^{-\mu}
\norm{A^{\frac{1}{2}} e^{-(t-s)A} \varphi }_{L^{r,1}} ds\lesssim
\e \norm{\varphi }_{L^{q',1}},
\end{equation}
where we used \eqref{th:4-1}.
This completes the proof.
\end{proof}


\section{Time asymptotics}
\label{S5}

In this section we consider large time asymptotics
of the solution close to the periodic solution.
We obtain Theorem \ref{th4}
as a consequence of a more general result
about the asymptotics:

\begin{theorem}[Time asymptotics]\label{th5}
For any $T>0$, $\de> 0$, $\eta>0$ and $3\le q_1 < \frac
3{\de+\eta}$, there is $\e_2>0$ such that
the following holds.
Let $u_*$ and $f$ be time-periodic data satisfying
\eqref{iii-est}.
Let $u_0 \in X_1$ be initial data satisfying
 $\|u_0\|_{L^{3,\I}} \le \ve_2$. Assume that
there exists a vector field $\td u_0 \in X_1$
such that
\begin{equation}
\label{u_0-est}
\|\tilde u_0 \|_{X_1} \le \ve_2, \quad
\nb \td u_0 \in X_2 \quad \textrm{and} \quad
\td \e :=\norm{u_0-\td u_0}_{L^{3,\I}\cap L^{\qz ,\I}}
 \le \e_2,
\end{equation}
then the solution $u$ in Theorem \ref{th1} (i)
can be decomposed as the follows:
$$
u=Q+w+r.
$$
Here $Q$ is the periodic solution for the data $u_*$ and $f$
in Theorem \ref{th1} (iii). The term
$w$ is the unique solution of the perturbed Navier-Stokes system
\eqref{w-eq} in $\R^3$ with $U=\td U = U^b$ and initial data $w_0=\td u_0-U^b$
in Theorem \ref{th3}, where
$U^b$ is the Landau solution corresponding to $Q$
given in Theorem \ref{th2} with $\al=1+\de$.
The term $r$ satisfies the following decay estimate:
\begin{equation*}
\norm{r(t)}_{L^q_w(\Om)}
\le C \td \ve t^{-\frac32(\frac13 - \frac 1q)-\frac{\de}{2}}
 \qquad  \forall t>0, \quad \forall q \in [3,q_1].
\end{equation*}

\end{theorem}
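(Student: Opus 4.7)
The plan is to decompose $u$ as in the statement, derive a Stokes-type perturbation equation for $r$, and close a fixed-point/Duhamel estimate for $r$ using the ingredients already assembled: the pointwise bound $|Q-U^b|\le C|x|^{-1-\de}$ from Theorem \ref{th2}, the decay of $w$ from Proposition \ref{th3}, and the Stokes-in-Lorentz estimates of Lemma \ref{th:2-1} together with Lemmas \ref{th:4-2}--\ref{th:4-3}.

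First, I would apply Proposition \ref{th3} with $U=\tilde U=U^b$ and initial datum $w_0=\tilde u_0-U^b$. By Theorem \ref{th2}, $|U^b(x)|\le C|b||x|^{-1}$ with $|b|\lec\td\e$, and $\nb U^b\in X_2$, so the hypothesis $\|w_0\|_{X_1}+\|\nb w_0\|_{X_2}\le C\e_2$ holds and $w$, $\nabla w$ obey \eqref{w-est}, \eqref{Dw-est}. Next, because $w$ lives on $\R^3$ while $u,Q$ live on $\Om$, I would localize with the cutoff $\zeta$ of Section \ref{S2b} and use Lemma \ref{th:4-2} to define $\hat w=\La w$ so that $\bar w:=\zeta w+\hat w$ is divergence-free on $\Om$, supported away from $\pd\Om$ (hence $\bar w|_{\pd\Om}=0$). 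Setting $r:=u-Q-\bar w$, the boundary and divergence conditions for $r$ are homogeneous, and
\[
r(0)=(u_0-\td u_0)+\bigl[(1-\zeta)\td u_0+(\zeta-1)U^b-(Q(0)-U^b)-\hat w(0)\bigr].
\]
The first bracket on the right has norm $\le\td\e$ in $L^{3,\I}\cap L^{\qz,\I}$ by assumption; the second is compactly supported near $\pd\Om$ plus the $O(|x|^{-1-\de})$ tail $Q(0)-U^b$, so it also lies in $L^{3,\I}\cap L^{\qz,\I}$ with norm $\lec\td\e$.

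Subtracting the equations for $u$, $Q$, and $w$, and accounting for the cutoff errors coming from $\bar w-w$, the residual $r$ solves an equation of the form
\[
\pd_t r-\De r+\nabla p_r=-\nabla\cdot\mathcal{N}(r)-\nabla\cdot\mathcal{S}+\mathcal{R},\qquad\div r=0,\qquad r|_{\pd\Om}=0,
\]
with $\mathcal{N}(r):=Q\ot r+r\ot Q+\bar w\ot r+r\ot\bar w+r\ot r$, the driving term $\mathcal{S}:=(Q-U^b)\ot\bar w+\bar w\ot(Q-U^b)+(\bar w\ot\bar w-\zeta\,w\ot w)$, and $\mathcal{R}$ collecting the localization error $(\De\zeta)w+2(\nabla\zeta\cdot\nabla)w-(\nabla\zeta)\cdot\mathrm{(nonlinearity\ of\ }w)+\De\hat w-\pd_t\hat w$. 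I would then write the Duhamel representation
\[
r(t)=e^{-tA}Pr(0)+\int_0^t e^{-(t-s)A}P\bigl[-\nabla\cdot\mathcal{N}-\nabla\cdot\mathcal{S}+\mathcal{R}\bigr](s)\,ds,
\]
handling the awkward $\pd_t\hat w$ contribution by the integration-by-parts estimate of Lemma \ref{th:4-3}.

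The fixed-point iteration takes place in the space $Y=\bigcap_{q\in[3,q_1]}\{r:\sup_{t>0}t^{\beta(q)}\|r(t)\|_{L^{q,\I}}<\infty\}$ with $\beta(q)=\tfrac32(\tfrac13-\tfrac1q)+\tfrac\de2$, the rate dictated by treating $r(0)$ as a datum in $L^{\qz,\I}$ and applying \eqref{eq:decay1} to $e^{-tA}Pr(0)$. To estimate the nonlinear map I would use Lemma \ref{th:2-1a}-type bounds for $\nabla\cdot$ under $e^{-(t-s)A}P$: the cubic-in-$r$ term is standard; the terms $Q\ot r$ and $r\ot Q$ are absorbed since $\|Q\|_{L^{3,\I}}\lec\td\e$; the terms $\bar w\ot r,r\ot\bar w$ are absorbed using $\|\bar w(t)\|_{L^{3,\I}}\lec\e$ from Proposition \ref{th3}. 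The source $\mathcal{S}$ is the decisive one: $(Q-U^b)\ot\bar w\in L^{s,\I}$ with the extra power $|x|^{-\de}$ producing a time decay $t^{-\de/2}$ better than a generic $L^{3/2,\I}$ forcing, which is precisely what supplies the $\de/2$ in the exponent $\beta(q)$; the $\bar w\ot\bar w-\zeta\,w\ot w$ piece is supported in $\mathcal{R}_{cut}$ and hence handled by Lemma \ref{th:4-3} after an integration by parts. The residual $\mathcal{R}$ is supported in $\mathcal{R}_{cut}$, inheriting the time decay of $w$ and $\nabla w$, and is controlled by Lemma \ref{th:4-3}.

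The hardest point I anticipate is aligning the time-decay exponents: one must exploit the improved spatial decay of $Q-U^b$ from Theorem \ref{th2} (which yields the $\tfrac\de2$) together with the pointwise decay of $w$ from Proposition \ref{th3} (which gives the $(\sqrt t+|x|)^{-1+\eta}|x|^{-\eta}$ weight) and show the nonlinear closure holds simultaneously for all $q\in[3,q_1]$; the constraint $q_1<3/(\de+\eta)$ is exactly what guarantees that these rates are compatible and that Lemma \ref{th:4-3} can be applied with $\si=\tfrac{1+\de}{2}-\tfrac{3}{2q}$. Once these estimates are in place, smallness of $\e_2$ yields a unique fixed point in $Y$, completing the decomposition and the decay estimate for $r$.
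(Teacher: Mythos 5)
Your proposal follows essentially the same route as the paper's proof: both define $\td w=\zeta w+\hat w$ via Lemma \ref{th:4-2} (what you call $\bar w$), work with $z=u-Q-\td w$ so the boundary and divergence conditions are homogeneous, extract the $t^{-\de/2}$ gain from $\|Q-U^b\|_{L^{3/(1+\de),\I}}$ (via Theorem \ref{th2} with $\al=1+\de$), handle $\pd_t\hat w$ through Lemma \ref{th:4-3}, and close a contraction in the same weighted-in-time Lorentz framework using Lemma \ref{th:2-1}. Your bookkeeping of the source terms (your $\mathcal{S}$, $\mathcal{R}$) is organized a bit differently from the paper's $F_2,F_3,F_4,\tilde f$ but is morally the same decomposition, and you correctly identified the role of the constraint $q_1<3/(\de+\eta)$.
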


{\it Comments for Theorem \ref{th5}:}

\begin{enumerate}

\item
Theorem \ref{th4} is a special case of this theorem
when $\td u_0$ is self-similar (or (-1)-homogeneous).
We are also able to show that $w$ is DSS
when $\td u_0$ is DSS as we mentioned in the comments of
Proposition \ref{th3}.

\item
The restriction $q\ge 3$ is related to the fact on
the lack of the coercive estimate for the Stokes operator
in $L^{p,\I}(\Om)$ for large $p$ in the exterior domain.
See Lemma \ref{th:2-0A}. Note that it is not the case
when $\Om=\R^3$ or when $\Om$ is bounded.

\item By taking $\de+\eta$ small, we can choose $q_1<\I$ arbitrarily
large. But our method does not allow $q_1=\I$.

\item
Our method allows us to consider $u_* = \td u_* + \hat u_*$ where
$\td u_*$ is time periodic and $\hat u_*$ decays in time. We
assume $\hat u_*=0$ for simplicity.

\end{enumerate}

\noindent
\begin{proof}
\,
Suppose $\pd \Om$ belongs to the region $0<R_0 < |x|<R_1$ and let
$\Rcut$ and cut-off function $\zeta(x)$ be defined as in section
\ref{S2b}.
%
%
To prove Theorem \ref{th5}, we will study  $z=u-Q-\tilde{w}$
instead of $r=u-Q-w$ where $\td w$ is a cut-off of $w$ which
vanishes near $\pd \Om$:
\begin{equation}\label{tdw.def}
v= u-Q=\td w + z, \quad \td w =  \zeta w + \hat w,
\end{equation}
where $\hat w$ with compact support in $\Rcut$ and $\div \hat w =
- w\cdot \nb \zeta$ is given by Lemma \ref{th:4-2}. Since the
difference $w -\tilde{w}$ is localized near the boundary, it is
easy to show that it decays in time. Compared to
\eqref{S2:td-u-eq1}, this choice ensures $z|_{\pd \Om}=0$ but
introduces $\pd_t \hat w$ as a source term in $\pd_t z$ equation.

Specifically, $v$ and $w$ satisfy
\begin{equation}
\pd_t v - \De v + \nb p_1 = - \nb F_v, \quad \div v =0,
\end{equation}
\begin{equation}
\pd_t w - \De w + \nb p_0 = - \nb F_w, \quad \div w =0,
\end{equation} with
\begin{equation}
F_v = (v+Q)\ot v + v \ot Q, \quad F_w = (w+U^b)\ot w + w\ot U^b,
\end{equation}
and
\begin{equation}
v(0)=v_0:=u_0 - Q(0),\quad v|_{\pd \Om}=0,\quad w(0)=w_0:=\td u_0
- U^b.
\end{equation}
Thus, $\tilde{w}$ defined by \eqref{tdw.def} satisfies the Stokes
system for $(t,x) \in \R_+ \times \R^{3}$
\begin{equation}
\pd_t \wt w - \De \wt w + \nb (\zeta p_0)
=  -\nb (\zeta F_w+\nb
\hat w)+\tilde{f} + \pd_t \hat{w} , \quad \div \td w =0,
\end{equation}
with $\td w(0)=\td w_0:= \zeta w_0 + \hat w_0$, where
\begin{equation}\label{S4:td-u-eq3}
\begin{split}
\tilde{f} &= (\nb \zeta)\cdot F_w  -2(\nb \zeta\cdot\nb) w -(\De \zeta)w
+ p_0 \nb \zeta.
\end{split}\end{equation}
%

The vector field $z=v-\td w=u-Q-\td w$, defined by
\eqref{tdw.def},  satisfies, for $p_3=p_1 - \zeta p_0$,
\begin{equation}\label{5:z-eq1}
\pd_t z - \De z +  \nb p_3=\nb F_z - \tilde{f}  -\pd_t \hat{w}, \quad
\div z =0,\quad (x\in \Om)
\end{equation}
\begin{equation}\label{5:z-eq2}
z(0)=z_0 ,\quad z|_{\pd \Om}=0,
 \end{equation} where $F_z =\zeta F_w- F_v +\nb \hat w$ and
\begin{equation}
z_0 :=P(v_0-\td w_0) =P[(u_0 - \td u_0)+ (U^b - Q(0))+
((1-\zeta)w_0-\hat w_0)],
\end{equation}
and, using \eqref{th2-eq3} with $\al=1+\de$,
\begin{equation}
\norm{z_0}_{L^{3,\I}\cap L^{\qz ,\I}}\lec \e.
\end{equation}
Note we can add the Helmholtz projection $P$ in the definition of $z_0$ since $z(0)=z_0$ is understood in weak sense.
Since
\begin{equation}\begin{split}
F_v- F_w =&z\otimes z +(\td w +Q) \otimes z +z \otimes (\td w + Q)
+(Q-U^b) \otimes \td w + \td w \otimes (Q-U^b)
\\
&-(w-\td w) \otimes (w+U^b) -(\td w + U^b) \otimes (w- \td w),
\end{split}\end{equation}
we can decompose $F_z$ as
\begin{equation}\begin{split}
F_z=  z \otimes z +F_1(z) +F_2 +F_3 +F_4
\end{split}\end{equation}
where
\begin{equation}\begin{split}
&F_1(z) = z \otimes (\tilde w + Q) +(\tilde w +Q) \otimes z,
\\
&F_2 = \tilde w \otimes (Q-U^b) +(Q-U^b) \otimes \tilde w,
\\
&F_3 =(w-\td w)\otimes(w+U^b) -(\td w +U^b)\otimes (w - \td w),
\\
&F_4 =(\zeta -1) F_w +\nb \hat w.
\end{split}\end{equation}
Note that $F_3$ and $F_4$ have compact supports.

We will prove
\begin{equation} \label{5:z-est}
\sup_{t>0} t^{\sigma} \|z(t)\|_{L^{q, \infty}} \lec \ve \qquad
\mathrm{for} \quad q \in [3,q_1],
\end{equation}
with
\begin{equation}
\si = \si(q,\de)=\frac 32(\frac 13-\frac 1q)+\frac \de 2
%
>0.
\end{equation}
By unique existence of $u\in L^\I L^{3,\I}$ (Theorem \ref{th1})
and H\"older inequality, it suffices to prove the existence of
$z$ satisfying \eqref{5:z-eq1}, \eqref{5:z-eq2} and
\[
\norm{z}_{Z}:= \sup_{t>0} \{ t^{\de/2} \|z(t)\|_{L^{3, \infty}} +
t^{\si_1} \|z(t)\|_{L^{q_1, \infty}} \}\lec \e
\]
where $\si_1 = \si(q_1,\de)$.

Denote
\begin{equation}
\Psi (f)(t) =\int^t_0 e^{-(t-s)A} P  f(s)ds
\end{equation}
where the integral is in the weak sense,
\begin{equation}
\bke{\Psi (f)(t),\ph} = \int^t_0 (f(s),e^{-(t-s)A}\ph) ds,
\quad \forall \ph \in C^\I_{c,\si}(\Om).
\end{equation}

A solution $z(t)$ of \eqref{5:z-eq1} and \eqref{5:z-eq2} is a
fixed point of the nonlinear map $\Phi$ defined for $z\in Z$,
\[
\Phi z(t) =z^1(t)+ \Psi(\nb [z\ot z + F_1(z)])(t)
\]
where
\[
z^1(t)= e^{-tA}z_0 + \Psi(\nb [F_2+F_3+F_4] - \tilde{f} - \pd_t \hat
w)(t).
\]
We will show that for $\e$ sufficiently small,
\[ \label{fixedpoint-est}
\norm{\Phi z}_{Z} \le C \e, \quad \norm{\Phi z - \Phi \td z}_{Z}
\le C \e \norm{z- \td z}_{Z},
\]
which guarantees the unique existence of a fixed point $z$  of
$\Phi$ with $\norm{z}_{Z} \le C \e$.

By Lemma \ref{th:2-1} (i), 
we have
\begin{equation}
t^{\sigma} \|e^{-tA}z_0\|_{L^{q,\infty}} \lec
 \|z_0\|_{L^{\qz , \infty}}.
\end{equation}

Consider the terms of divergence form. By duality, we have
\begin{equation}\begin{split}
\|\Psi (\nb F)  \|_{L^{q,\I}} =\sup_{\ph \in L^{q', 1}_\sigma}
|\ip{\Psi(\nb F)}{\ph}|.
\end{split}\end{equation}
By \eqref{th:4-1} with $1/r=1/q'-1/3$ and $1/r'=1/q+1/3$, we
have
\begin{equation}
\begin{split}
|\ip{\Psi(\nb F)}{\ph}| &\le \int^t_0 \|F(s)\|_{L^{r',\infty}}
\|\nb e^{-(t-s)A}\ph \|_{L^{r,1}}ds
\\
&\lec  \bke{\sup_{t>0}t^{\sigma}\|F(t)\|_{L^{r',\I}}} \int^t_0
s^{-\sigma} \|\nb e^{-(t-s)A}\ph \|_{L^{r,1}}ds
\\
&\lec \bke{\sup_{t>0} t^{\sigma} \|F(t)\|_{L^{r',\I}}} t^{-\sigma}
\| \ph \|_{L^{q',1}}.
\end{split}\end{equation}
In the above we need $1<q' \le 3/2$, thus $3\le q <\I$.

We now estimate $F=F_z$. First,
\begin{equation}
t^{\sigma}\|z \otimes z +F_1(z) \|_{L^{r',\I}} \lec
t^{\sigma}\|z\|_{q,\I} (\|z\|_{3,\I}+\|\td
w\|_{3,\I}+\|Q\|_{3,\I})\lec \e^2.
\end{equation}
Next, since $w-\td w=(1-\zeta)w + \hat w$ is compactly supported,
it follows from Lemma \ref{th:4-2} and Proposition \ref{th3} that,
for $1<q<\I$,
\begin{equation}
\begin{split}
\|w-\td w\|_{L^{q,\I}(\Om)} &\lec \|w(1-\zeta)\|_{q,\I} + \|\hat w
\|_{q,\I}
\\
&\lec \|w(1-\zeta)\|_{q,\I} + \| w \|_{L^{q, \I}(\Rcut)}
\\
&\lec \|w \|_{L^\I(\Om)} \lec \e \langle t
\rangle^{-\frac{1-\eta}{2}}.
\end{split}\end{equation}
Also note
\begin{equation}
\norm{w(t)}_{L^{r,\I}(\R^3)} \lec \e t^ {-\frac 12 + \frac 3{2r}},
\quad (3 \le r \le 3/\eta).
\end{equation}
With $1/q_\flat=1/q-\delta/3$, (hence $\frac 12 - \frac
3{2q_\flat}=\si(q,\de)$), we have
\begin{equation}
\begin{split}\| F_2\|_{L^{r',\I}} &\lec \|Q-U^b\|_{\frac{3}{1+\delta},\I} \|\td
w\|_{q_\flat,\I}\lec \e(\|
w\|_{q_\flat,\I}+ \e \bka{t}^{-\frac{1-\eta}{2}}) \\
&\lec \e^2 (t^{-\si}+\bka{t}^{-\frac{1-\eta}{2}})\lec \e^2
t^{-\si} .
\end{split}\end{equation}
Recall  we take $\alpha =1+\delta$ in Theorem \ref{th2} so that
$Q-U^b \in L^{\frac{3}{1+\delta},\I}$. We have also used that $0<
\si \le \frac{1-\eta}{2}$ for $3 \le q \le q_1$, which also
implies
\begin{equation}
\|w-\td w\|_{L^{q,\I}(\Om)} \lec \ve t^{-\sigma}.
\end{equation}
Thus
\begin{equation}
t^{\sigma}\|F_3\|_{r',\I} \lec t^{\sigma} \|w-\td w\|_{q, \I}
(\|w\|_{3,\I}+\|\td w \|_{3,\I}+\|U^b\|_{3,\I}) \lec \ve ^2.
\end{equation}
Finally $F_4=(\zeta -1)F_w + \nb \hat w$ can be estimated as
follows:
\begin{equation}
\begin{split}
\|F_4\|_{r',\I} &\lec (\|w
\|_{3,\I}+\|U^b\|_{3,\I})\|(\zeta-1)w\|_{q, \I}
+\|w\|_{L^{r',\I}(\Rcut)}
\\
&\lec \|w \|_{L^\I(\Om)} \lec \e \langle t
\rangle^{-\frac{1-\eta}{2}} \lec \e t^{-\si} .
\end{split}\end{equation}

Next we consider $\Psi(\tilde{f})$. Denote  $r_1=\frac{3}{2+\delta
+\eta}$ and $r_2=\frac{3}{1+\eta}$. Note $1<r_1<r_2$. By  decay
estimates,
\begin{equation}
\|\Psi (\tilde{f})(t)\|_{q,\infty} \lec \int^t_0 (t-s)^{-\si_2}
\|\tilde{f}(s)\|_{r_1,\infty}ds, \quad \si_2 =\frac 32(\frac 1{r_1} -
\frac 1q).
\end{equation}
Note $0<\si_2<1$ since $q \le q_1< \frac 3{\de+\eta}$. By
Proposition \ref{th3}, on $\Rcut$ we have $|w|+|\nb w| \lec \e
\bka{t}^{-\frac 12 + \frac \eta 2}$, $|F_w|\lec |w|^2 + \e |w|
\lec \e ^2 \bka{t}^{-\frac 12 + \frac \eta 2}$ and $
 \|p_0(t)\|_{r_2,\infty}\le \e^2 \bka{t}^{-\frac 12 + \frac
\eta 2}$. Thus, using  $\supp \tilde{f}(t) \subset \Rcut$,
\begin{equation}
 \|\tilde{f}(s)\|_{r_1,\infty}\lec \|\tilde{f}(s)\|_{r_2,\infty}\lec \e \bka{s}^{-\frac 12 +
\frac \eta 2}.
\end{equation} We get
\begin{equation}
\|\Psi (\tilde{f})(t)\|_{q,\infty} \lec \int^t_0 (t-s)^{-\si_2} \e
\bka{s}^{-\frac 12 + \frac \eta 2} ds \lec \e t^{-\si_2+\frac 12 +
\frac \eta 2} =\e t^{-\si}.
\end{equation}

Finally the estimate $\Psi(\pd_t \hat w)$ is proved by Lemma
\ref{th:4-3}.

The above shows the first assertion $\norm{\Phi z}_{Z} \le C \e$
in \eqref{fixedpoint-est}. The second estimate in
\eqref{fixedpoint-est} on $\norm{\Phi z - \Phi \td z}_Z $ is
proved similarly. This finishes the proof of Theorem \ref{th5}.
\end{proof}

\section*{Acknowledgments}

We thank Professor C.-C. 
Chen for providing the references \cite{ADN,Smith} for
\eqref{S4:pi-est}, and Professor Y. Giga for discussing
Lemma~\ref{th:2-1} (ii).
Part of this work was done when all of us
visited the National Center for Theoretical Sciences (Taipei
Office) and the Department of Mathematics, National Taiwan
University, when both Kang and Miura visited the University of
British Columbia, and when Tsai visited Sungkyunkwan University
and Taida Institute for Mathematical Sciences. We would like to
thank the kind hospitality of these institutions and Professors
D.~Chae, C.-C. Chen, S.~Gustafson, J.~Lee, C.-S. Lin and C.-L.
Wang. The research of Kang is partly supported by
KRF-2008-331-C00024 and R01-2008-000-11008-0. The research of
Miura was partly supported by the JSPS grant no.~191437. The
research of Tsai is partly supported by Natural Sciences and
Engineering Research Council of Canada, grant no.~261356-08.


\begin{thebibliography}{10}

\bibitem{ADN}
Agmon, S.; Douglis, A.; Nirenberg, L.: Estimates near the boundary
for solutions of elliptic partial differential equations
satisfying general boundary conditions. Comm. Pure Appl. Math.
12 1959 623--727.

\bibitem{Amann} Amann, H.: 
Navier-Stokes Equations with
Nonhomogeneous Dirichlet Data, Journal of Nonlinear Mathematical
Physics Volume 10, Supplement 1 (2003), 1--11.


\bibitem{BM95} 
Borchers, W.; Miyakawa, T.:
On stability of
  exterior stationary Navier-Stokes flows.  Acta Math.  174 (1995),
  no. 2, 311--382.




\bibitem{CK}
Cannone, M.; Karch, G.:
   Smooth or singular solutions to the Navier-Stokes system?
       J. Differential Equations 197 (2004), no. 2, 247--274.

\bibitem{CMP}
Cannone, M.; Meyer, Y.; Planchon, F.: Solutions auto-simlaires des
equations de Navier-Stokes, In: Seminaire X-EDP, Centre de
Mathematiques, Ecole polytechnique, 1993-1994.

\bibitem{CP} Cannone, M.; Planchon, F.: Self-similar solutions for the
Navier-Stokes equations in $R^3$. Comm. Part. Diff. Equ. 21  (1996),  no. 1-2, 179--193.

\bibitem{CDW}
Cazenave, T., Dickstein, F.,Weissler, F.: Chaotic behavior of
solutions of the Navier-Stokes system in $R^N$ . Adv. Differ. Equ.
10 (4), 361-398, 2005.




\bibitem{FKS} 
Farwig, R.; Kozono, H.; Sohr, H.: Very
  weak solutions of the Navier-Stokes equations in exterior domains
  with nonhomogeneous data.  J. Math. Soc. Japan 59 (2007), no. 1,
  127--150.

\bibitem{Finn}
Finn, R.: On the exterior stationary problem for the
Navier-Stokes equations, and associated perturbation problems. Arch.
Rational Mech. Anal.  19  1965 363--406.

\bibitem{Galdi}   Galdi, G.~P.:
{An Introduction to the Mathematical Theory of the
Navier-Stokes Equations: Linearized steady problems}, Volume I,
Springer, 1994.

\bibitem{Galdi2}  Galdi, G.~P.:
{An Introduction to the Mathematical Theory of the
Navier-Stokes Equations: Nonlinear steady problems}, Volume II,
Springer, 1994.


\bibitem{Galdi-Sohr}  Galdi, G.; Sohr, H.: Existence and
uniqueness of time-periodic physically reasonable Navier-Stokes flow
past a body. Arch. Ration. Mech. Anal. 172 (2004), no. 3, 363--406.

\bibitem{GM}
Giga, Y.; Miyakawa, T.: Navier-Stokes Flow in $R^3$ with Measures
as Initial Vorticity and Morrey Spaces. Comm. Part. Differ. Equ.
14 (5), 577--618, 1989.

\bibitem{Heywood} Heywood, J.~G.: The exterior nonstationary problem
  for the Navier-Stokes equations. Acta Math. 129 (1972), no. 1-2,
  11--34.


\bibitem{Kap-Pil} Kapitanski\u{i}, L.~V.; Piletskas, K.~I.:
Some problems of vector analysis. (Russian) Boundary value
problems of mathematical physics and related problems in the
theory of functions, 16.  Zap. Nauchn. Sem. Leningrad. Otdel. Mat.
Inst. Steklov. (LOMI)   138 (1984), 65--85.



\bibitem{KK06} Kim, H.; Kozono, H.: A removable isolated
singularity
theorem for the stationary Navier-Stokes equations. Journal of
Differential Equations 220 (2006), 68--84.



\bibitem{Kor-Sve}
Korolev, A.;  Sverak,~V.: { On the large-distance asymptotics of
steady state solutions of the Navier-Stokes equations in 3D
exterior domains}, preprint: arXiv:0711.0560

\bibitem{KoNa}
Kozono, H.; Nakao, M.: Periodic solutions of the Navier-Stokes
equations in unbounded domains. Tohoku Math. J. (2) 48, 33--50
(1996).

\bibitem{KO}
Kozono, H.; Ogawa, T.: On stability of Navier-Stokes
flows in exterior domains. Arch. Rational Mech. Anal. 128 (1994),
no. 1, 1--31.


\bibitem{KY98MZ}
Kozono, H.; Yamazaki, M.: On a larger class of stable
solutions to the Navier-Stokes equations in exterior domains.  Math.
Z. 228 (1998), no. 4, 751--785.

\bibitem{KY09} Kozono, H.; Yanagisawa, T.: Nonhomogeneous boundary
  value problems for stationary Navier-Stokes equations in a multiply
  connected bounded domain.  Pacific J. Math. 243 (2009), no. 1,
  127-150.

\bibitem{Landau} Landau, L. D.:  A new exact solution of the
Navier-Stokes equations,  Dokl.\ Akad.\ Nauk SSSR,  43, 299, 1944.

\bibitem{LL} Landau, L. D.; Lifshitz, E. M.:  Fluid Mechanics,
second edition, Butterworth-Heinemann, 2000 paperback reprinting.

\bibitem{Mar-Pad} Maremonti, P.; Padula, M.: Existence, uniqueness
and attainability of periodic solutions of the Navier-Stokes
equations in exterior domains. (English, Russian summary) Zap.
Nauchn. Sem. S.-Peterburg. Otdel. Mat. Inst. Steklov. (POMI) 233
(1996), Kraev. Zadachi Mat. Fiz. i Smezh. Vopr. Teor. Funkts. 27,
142--182, 257; translation in J. Math. Sci. (New York) 93 (1999),
no. 5, 719--746.


\bibitem{MT} Miura, H.; Tsai, T.-P.:
Point singularities of 3D stationary Navier-Stokes flows, J. Math.
Fluid Mech.,
DOI: 10.1007/s00021-010-0046-6

\bibitem{Miyakawa} Miyakawa, T.: On nonstationary solutions of
the Navier-Stokes equations in an exterior domain.
Hiroshima Math. J.  12  (1982), no. 1, 115--140.

\bibitem{Naz-Pil}
Nazarov, S. A., Pileckas, K.: On steady Stokes and Navier-Stokes
problems with zero velocity at infinity in a three-dimensional
exterior domain, J. Math. Kyoto Univ. 40-3 (2000), 475--492.


\bibitem{Oseen} Oseen, C. W.: Hydrodynamik, Leipzig, 1927.

\bibitem{Pla}
Planchon, F.: Asymptotic behavior of global solutions to the
Navier-Stokes equations in $R\sp 3$.  Rev. Mat. Iberoamericana  14
(1998),  no. 1, 71--93.

\bibitem{Salvi} Salvi, R.: On the existence of periodic weak
solutions on the Navier-Stokes equations in exterior regions with
periodically moving boundaries. Navier-Stokes equations and
related nonlinear problems (Funchal, 1994), 63--73, Plenum, New
York, 1995.

\bibitem{Simader-Sohr} Simader,~C.~G.; Sohr,~H.: A new approach to the
  Helmholtz decomposition and the Neumann problem in Lq spaces for
  bounded and exterior domains. In: G.P. Galdi, Editor,
  Math. Probl. Relating to the Navier-Stokes Equations, World
  Scientific, Singapore (1992).

\bibitem{Smith}
Smith, H.~F.: The subelliptic oblique derivative problem. Comm.
Partial Differential Equations 15 (1990), no. 1, 97--137.

\bibitem{Solonnikov} Solonnikov, V. A.:
Estimates for solutions of a non-stationary linearized system of
Navier-Stokes equations. (Russian) Trudy Mat. Inst. Steklov. 70
(1964) 213--317.

\bibitem{Sverak} Sverak,~V.:  { On Landau's Solutions of
the Navier-Stokes Equations}, preprint: arXiv:math/0604550

\bibitem{Taniuchi}
Taniuchi, Y.: On the uniqueness of time-periodic solutions to
the Navier-Stokes equations in unbounded domains.  Math. Z.  261
(2009), no. 3, 597--615.

\bibitem{Tian-Xin}
Tian, G.; Xin, Z.: One-point singular solutions to the
Navier-Stokes equations. Topol. Methods Nonlinear Anal. 11 (1998),
no. 1, 135--145.

\bibitem{Yamazaki} Yamazaki, M.: The Navier-Stokes equations in the
  weak-$L\sp n$ space with time-dependent external force.  Math. Ann.
  317 (2000), no. 4, 635--675.

\end{thebibliography}
\end{document}